\setlist[enumerate]{label=(\roman*)}
\newtheorem{theorem}{Theorem}[section]
\newtheorem{lemma}[theorem]{Lemma}
\newtheorem{corollary}[theorem]{Corollary}
\theoremstyle{definition}
\theoremstyle{remark}
\newtheorem{remark}[theorem]{Remark}
\numberwithin{equation}{section}
\newcommand{\N}{\mathbb{N}}	% natuerliche Zahlen
\newcommand{\sB}{\mathscr{B}}	% Borelmenge
\newcommand{\sE}{\mathscr{E}}	% sigma-Algebra des Zustandsraums E
\newcommand{\cC}{\mathcal{C}}	% Raum aller stetigen Funktionen
\newcommand{\BB}{B}		% Kugel: \BB_\delta(x) = \{ y \in \XX: |x - y| < \delta \}
\renewcommand{\O}{\Omega}	% Grundraum aller Elementarereignisse
\renewcommand{\o}{\omega}	% Elemente des Grundraums
\newcommand{\PV}{\mathbb{P}}	% W-Mass
\newcommand{\EV}{\mathbb{E}}	% Symbol fuer den Erwartungswert
\newcommand{\sF}{\mathscr{F}}	% natuerliche Filtration
\newcommand{\D}{\Delta}		% Friedhof, Laplace-Operator
\newcommand{\sq}{\square}       % auxiliary Friedhof
\newcommand{\z}{\zeta}		% Lebenszeit
\renewcommand{\a}{\alpha}	% Stoppoperator
\renewcommand{\t}{\tau}		% allg. Stoppzeit
\newcommand{\h}{H}		% Ersteintrittszeit
\newcommand{\sD}{\mathscr{D}}	% Definitionsbereich des Erzeugers
\newcommand{\cG}{\mathcal{G}}	% metrischer Graph, als Tupel von 
\newcommand{\cV}{\mathcal{V}}	% -- Knotenmenge
\newcommand{\cI}{\mathcal{I}}	% -- interne Kanten
\newcommand{\cE}{\mathcal{E}}	% -- externe Kanten
\newcommand{\cL}{\mathcal{L}}	% -- Kantenmenge
\newcommand{\cLV}{\partial}	% -- Abb. Kante -> Knoten
\newcommand{\cR}{\rho}		% Kantenlänge
\newcommand{\tcG}{\widetilde{\cG}}
\newcommand{\tcL}{\widetilde{\cL}}
\newcommand{\bs}{\backslash}	% Mengendifferenz
\newcommand{\comp}{\complement}	% Komplement
\newcommand{\e}{\varepsilon}	% epsilon > 0
\renewcommand{\d}{\delta}	        % delta > 0
\newcommand{\p}{\pi}		% allg. Projektionsabbildung
\newcommand{\tX}{\widetilde{X}{}} 
\newcommand{\ttX}{\widetilde{\tX}{}} 
\newcommand{\ttcG}{\widetilde{\tcG}{}} 
\newcommand{\ttcL}{\skew{3.5}\wt{\tcL}} 
\renewcommand{\ttcG}{\skew{3.5}\wt{\tcG}} 
\renewcommand{\ttX}{\skew{3.5}\wt{\tX}}
\renewcommand{\tt}{\widetilde{\t}{}}
\newcommand{\ttt}{\skew{1.5}\wt{\tt}} 
\newcommand{\tH}{\widetilde{\h}{}}
\newcommand{\ttH}{\skew{1.5}\wt{\tH}}
\newcommand{\tnu}{\widetilde{\nu}{}}
\newcommand{\ttnu}{\skew{1.5}\wt{\tnu}{}} 
\newcommand{\tK}{\widetilde{K}{}}
\newcommand{\ttK}{\skew{1.5}\wt{\tK}{}}
\newcommand{\tmu}{\widetilde{\mu}{}}
\newcommand{\ttmu}{\skew{1.5}\wt{\tmu}{}} 
\newcommand{\tomu}{\widetilde{\overbar{\mu}}{}}
\newcommand{\ttomu}{\skew{1.5}\wt{\tomu}{}} 
\newcommand{\tE}{\widetilde{E}}
\newcommand{\1}{\mathbbm{1}}		% Indikatorfunktion
\newcommand{\id}{\operatorname{id}} 	% Identitaetsfunktion
\newcommand{\abs}[1]{\left\lvert #1 \right\rvert}  % Absolutbetrag
\newcommand\restr[2]{{		  	% restriction of a function
  \left.\kern-\nulldelimiterspace % automatically resize the bar with \right
  #1 				  % the function
  \vphantom{\big|}		  % pretend it's a little taller at normal size
  \right|_{#2}    		  % this is the delimiter
  }}
\newcommand{\overbar}[1]{\mkern 1.5mu\overline{\mkern-1.5mu#1\mkern-1.5mu}\mkern 1.5mu}
\newcommand{\supp}{\operatorname{supp}}
\newcommand{\KPS}{Kostrykin, Potthoff and Schrader}
\newcommand{\FW}{Feller\textendash{}Wentzell}
\begin{document}

\title[Brownian Motions on Metric Graphs II: Construction]{Brownian Motions on Metric Graphs with Non-Local Boundary Conditions II: Construction}

%    Information for first author
\author[Florian Werner]{Florian Werner}
\address{Institut f\"ur Mathematik, Universit\"at Mannheim, 68131 Mannheim, Germany}
%\curraddr{}
\email{fwerner@math.uni-mannheim.de}
%\thanks{}

%    Information for second author
%\author{Author Two}
%\address{}
%\email{}
%\thanks{}

%    General info
\subjclass[2000]{60J65, 60J45, 60H99, 58J65, 35K05, 05C99}

\date{\today}%{December 4, 2017}% and, in revised form, June 22, 2001.}

%\dedicatory{This paper is dedicated to our advisors.}

\keywords{Brownian motion, non-local \FW\ boundary condition, metric graph, Markov process, Feller process}

%% for Springer template:
%\renewcommand{\qedhere}{\qed}
%\newcommand{\sqed}{\qed}
\newcommand{\sqed}{\relax}

\begin{abstract}
  A pathwise construction of discontinuous Brownian motions on metric graphs
is given for every possible set of non-local \FW\ boundary conditions.
This construction is achieved by locally decomposing the metric graphs into star graphs,
establishing local solutions on these partial graphs,
pasting the solutions together, introducing non-local jumps,
and verifying the generator of the resulting process.
\end{abstract}

\maketitle

\section{Introduction}

This article is the final part in a series of works in which we achieve a classification and pathwise construction of Brownian motions on metric graphs.
In~\cite{WernerMetricA}, we defined Brownian motions on metric graphs in accordance with previous works of It\^o and McKean~\cite{ItoMcKean63} 
and \KPS~\cite{KPS12}, that is, as right continuous, strong Markov processes which behave on every edge of the graph like
the standard one-dimensional Brownian motion.
There, we showed that the generator $A = \frac{1}{2} \, \Delta$ of every Brownian motion on a metric graph~$\cG$ satisfies at each vertex point~$v \in \cV$ 
a non-local \FW\ boundary condition
 \begin{align*}
  \forall f \in \sD(A): \quad
  p^v_1 f(v) - \sum_{l \in \cL(v)} p^{v,l}_2 \, f_l'(v) + \frac{p^v_3}{2} f''(v) - \int_{\cG \bs \{v\}} \hspace*{-8.2pt}  \big( f(g) - f(v) \big) \, p^v_4 (dg) = 0
 \end{align*}
for some constants $p^v_1 \geq 0$, $p^{v,l}_2 \geq 0$ for each edge~$l \in \cL(v)$ emanating from~$v$, $p^v_3 \geq 0$ and a measure $p^v_4$ on $\cG \bs \{v\}$,
normalized by
  \begin{align*}
    p^v_1 + \sum_{l \in \cL(v)} p^{v,l}_2 + p^v_3 + \int_{\cG \bs \{v\}} \big( 1 - e^{-d(v,g)} \big) \, p^v_4 (dg) = 1,
  \end{align*}  
and $p^v_4$ being an infinite measure if $\sum_{l \in \cL(v)} p^{v,l}_2 + p^v_3 = 0$.

After having developed the necessary process transformations of concatenations and process revivals in~\cite{WernerConcat},
collected the characteristic properties of Brownian motions on metric graphs in~\cite{WernerMetricA} and
constructed all Brownian motions on star graphs in~\cite{WernerStar},
we are now in the position to give a complete pathwise construction of Brownian motions on any metric graph
for every admissible set of \FW\ data, thus proving the following existence theorem:

\begin{theorem} \label{theo:G_GC:complete construction}
 Let $\cG = (\cV, \cE, \cI, \cLV, \cR)$ be a metric graph,\footnote{As in the previous works, we will assume any metric graph discussed here to have no loops (see~\cite[Section~A.2, Remark~3.1]{WernerMetricA}).
Furthermore, we restrict our attention to metric graphs with finite sets of edges and vertices. A short introduction to metric graphs can be found in~\cite[Appendix~A]{WernerMetricA}.}
 and for every $v \in \cV$ let constants
 $p^v_1 \geq 0$, $p^{v,l}_2 \geq 0$ for each $l \in \cL(v)$, $p^v_3 \geq 0$ and a measure $p^v_4$ on $\cG \bs \{v\}$ be given with
  \begin{align*}
    p^v_1 + \sum_{l \in \cL(v)} p^{v,l}_2 + p^v_3 + \int_{\cG \bs \{v\}} \big( 1 - e^{-d(v,g)} \big) \, p^v_4 (dg) = 1,
  \end{align*}  
  and
  \begin{align*}
    p^v_4 \big( \cG \bs \{v\} \big) = +\infty, \quad \text{if} \quad \sum_{l \in \cL(v)} p^{v,l}_2 + p^v_3 = 0.
  \end{align*}
  Then there exists a Brownian motion $X$ on $\cG$ which is continuous inside all edges, such that its generator satisfies
   \begin{align*}
     \sD(A) \subseteq
       \Big\{ & f \in \cC^2_0(\cG): \forall v \in \cV: \\
          & p^v_1 f(v) - \sum_{l \in \cL(v)} p^{v,l}_2 \, f_l'(v) + \frac{p^v_3}{2} f''(v) - \int_{\cG \bs \{v\}} \big( f(g) - f(v) \big) \, p^v_4 (dg) = 0  \Big\}.
   \end{align*}
\end{theorem}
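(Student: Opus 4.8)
The plan is to carry out the five-step programme announced in the introduction. \textbf{Step 1 (splitting the data).} First I would fix $\e>0$ small enough that the balls $B_\e(v)$, $v\in\cV$, are pairwise disjoint and each $B_\e(v)$ is contained in the star graph $\cG_v$ around $v$, and split the jump measure at each vertex as $p^v_4 = q^v_4 + r^v_4$ with $q^v_4 := p^v_4(\,\cdot\,\cap B_\e(v))$ and $r^v_4 := p^v_4(\,\cdot\,\cap(\cG\bs B_\e(v)))$. Since $1-e^{-d(v,\cdot)}\ge 1-e^{-\e}$ off $B_\e(v)$, the normalization forces $r^v_4$ to be a \emph{finite} measure with finite ``weight'' $\rho^v := \int (1-e^{-d(v,g)})\,r^v_4(dg)<\infty$, while $q^v_4$ is a measure on $\cG_v\bs\{v\}$ which is infinite exactly when $p^v_4$ is. Setting $\tilde p^v_1 := p^v_1 + \rho^v$, the tuple $(\tilde p^v_1,(p^{v,l}_2)_{l\in\cL(v)},p^v_3,q^v_4)$ is an admissible set of \FW\ data on the star graph $\cG_v$, and the infinite-measure requirement is still met when $\sum_{l\in\cL(v)} p^{v,l}_2 + p^v_3 = 0$.

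\textbf{Step 2 (local solutions and pasting).} Next I would invoke~\cite{WernerStar} to obtain, for each $v$, a Brownian motion $Y^v$ on $\cG_v$ realizing this data at $v$ and absorbed on the truncation boundary of $\cG_v$, together with ordinary one-dimensional Brownian motion, absorbed at both ends, on each portion of $\cG$ lying away from all vertices. Pasting all of these along their common boundary points --- which are interior edge points, where a Brownian motion on $\cG$ is merely required to be continuous --- by means of the concatenation results of~\cite{WernerConcat} yields a process $X^0$ on $\cG$ that behaves on every edge like one-dimensional Brownian motion and whose generator $A^0$ satisfies, at each $v$, the boundary condition with data $(\tilde p^v_1,(p^{v,l}_2)_l,p^v_3,q^v_4)$. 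Here one must check that the concatenation creates no spurious boundary behaviour at the artificial cut points and transports the local vertex conditions verbatim, for which the characteristic-property results of~\cite{WernerMetricA} are the right tool.

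\textbf{Step 3 (non-local jumps and verification of the generator).} Since $X^0$ is killed at each $v$ with the surplus weight $\rho^v$ absorbed into $\tilde p^v_1 = p^v_1 + \rho^v$, I would then apply the process-revival (piecing-out) construction of~\cite{WernerConcat}: whenever $X^0$ is killed at $v$, with the proportion dictated by $\rho^v$ versus $p^v_1$ it is instead revived at a point drawn from a suitable normalization of the finite kernel $r^v_4$, and otherwise genuinely sent to the cemetery. Performed simultaneously at the finitely many vertices, this produces the desired process $X$, which is still continuous inside every edge. To verify $\sD(A)\subseteq\{\dots\}$ I would use the revival formula of~\cite{WernerConcat}: passing from $A^0$ to $A$ trades, at each $v$, the surplus killing weight $\rho^v$ for the non-local term $\int_{\cG\bs\{v\}}(f(g)-f(v))\,r^v_4(dg)$; combining this with the boundary condition for $A^0$ and re-assembling $q^v_4 + r^v_4 = p^v_4$ and $\tilde p^v_1 - \rho^v = p^v_1$ gives, for every $f\in\sD(A)$ and every $v\in\cV$,
\[
 p^v_1 f(v) - \sum_{l\in\cL(v)} p^{v,l}_2\,f_l'(v) + \tfrac{p^v_3}{2} f''(v) - \int_{\cG\bs\{v\}}\big(f(g)-f(v)\big)\,p^v_4(dg) = 0,
\]
while the resolvent analysis carried over from~\cite{WernerStar} yields $\sD(A)\subseteq\cC^2_0(\cG)$.

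\textbf{Expected main obstacle.} I expect the hard points to be, first, the pasting step --- ensuring that the concatenated $X^0$ is genuinely a Brownian motion on \emph{all} of $\cG$ (nothing pathological at the repeatedly-visited cut points, the local vertex conditions inherited exactly) and that neither $X^0$ nor $X$ explodes despite jumps scattered across a finite graph --- and, second, the degenerate regime $\sum_{l\in\cL(v)} p^{v,l}_2 + p^v_3 = 0$ with $p^v_4$ infinite, in which $X^0$ leaves $v$ instantaneously, the boundary local time governing the kill/jump rate at $v$ is delicate, and one must nevertheless make both the revival mechanism and the generator identity go through.
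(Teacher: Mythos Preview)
Your overall strategy --- split off the non-local part of each $p_4^v$, build a ``local'' Brownian motion on $\cG$ first, then add the long jumps by revival --- is the paper's, but two points in your outline do not go through as written.

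\textbf{The surplus killing weight is miscalibrated.} You set $\tilde p_1^v = p_1^v + \rho^v$ with $\rho^v = \int(1-e^{-d(v,g)})\,r_4^v(dg)$; this keeps the star-graph data normalized, but it is the wrong quantity for the revival step. By the revival lemma (Lemma~\ref{lem:G_GC:reviving BB with revival kernel}), reviving a process with killing weight $c_1^{v,\Delta}$ via a kernel $\kappa^v$ adds $c_1^{v,\Delta}\kappa^v$ to the jump measure and removes $c_1^{v,\Delta}$ from the killing. To recover exactly $r_4^v$ as the added jump part and $p_1^v$ as residual killing you therefore need $\tilde p_1^v\,\kappa^v|_{\cG} = r_4^v$ and $\tilde p_1^v\,\kappa^v(\{\sq\}) = p_1^v$ (for a fake cemetery $\sq$, or $\Delta$ if you allow sub-probability revival). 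Summing forces $\tilde p_1^v = p_1^v + r_4^v(\cG\bs\{v\})$, the \emph{total} mass of $r_4^v$, not the weighted mass $\rho^v$. With your choice the revival ``kernel'' would have total mass $(p_1^v + r_4^v(\cG\bs\{v\}))/(p_1^v+\rho^v)>1$ whenever $r_4^v$ is not supported at infinite distance, so your sentence ``re-assembling $\tilde p_1^v - \rho^v = p_1^v$'' does not correspond to any legitimate revival. The paper takes $q_1^v := p_1^v + p_4^v\big(\comp \BB_{\d}(v)\big)$ for exactly this reason and then renormalizes the local data by an explicit constant $c_0^v$.

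\textbf{The pasting scheme is different and hides the main difficulty.} You propose to cover $\cG$ by star neighbourhoods plus edge pieces and concatenate at interior cut points. The paper instead proceeds by \emph{iterated binary} decomposition: split $\cV=\cV^{-1}\uplus\cV^{+1}$, replace the crossing internal edges by external ``shadow'' edges on each side, and glue the two partial Brownian motions via the alternating-copies technique of~\cite{WernerConcat} (Section~\ref{sec:G_GC:glueing}). A key constraint, stated in Section~\ref{sec:G_GC:agenda}, is that this gluing requires the subprocesses to have \emph{infinite lifetime} (no killing at the vertices); hence the paper carries the weight $q_1^v$ as a jump to an absorbing ``fake cemetery'' $\sq^v$ throughout the gluing (Theorem~\ref{theo:G_GC:complete glueing, cemetries}) and only converts it back to killing afterwards (Lemmas~\ref{lem:G_GC:killing on absorbing set, generator data}--\ref{lem:G_GC:killing on absorbing set, Feller data}). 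Your $Y^v$ still carries killing $\tilde p_1^v$ at $v$, and you do not explain how your concatenation distinguishes death-at-$v$ (which must survive the pasting so that it can later be partially converted into long jumps) from absorption at the truncation boundary (which must trigger a switch). Your own ``expected main obstacle'' is exactly right: this is where the fake-cemetery device and the shadow-edge construction are doing the real work in the paper.
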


\section{Construction Approach} \label{sec:G_GC:agenda}
The construction proceeds as follows: We will begin with Brownian motions on star graphs which implement the
corresponding ``local'' boundary conditions (including ``small jumps'') at their respective vertices. When the process is started on one of these star graphs
and approaches (or jumps to) the vicinity of another vertex, it is killed and revived on the relevant subgraph with the help of concatenation techniques.
That way, we obtain a Brownian motion on a general metric graph by 
successive pastings of partial Brownian motions on star graphs.
The accurate construction approach will be laid out in the following.

 Technically, we will not start with star graphs, but with the complete metric graph which we then decompose into subgraphs. 
 This approach is necessary, as the subgraphs (that is, at some level, star graphs) must be chosen appropriately in order to
 construct the correct complete graph at the end, and the topology of the full graph is required for the pathwise construction and 
 the specification of the \FW\ data.
 
 Let $\cG = (\cV, \cI, \cE, \cLV, \cR)$ be a metric graph having at least two vertices. 
 We will break $\cG$ up by decomposing the set of vertices into  $\cV = \cV^{-1} \uplus \cV^{+1}$ and 
 defining two ``subgraphs'' $\ttcG^{j}$, $j \in \{-1, +1\}$, which possess the respective vertices $\cV^j$ as well as all 
 of the original edges (with their combinatorial structure) not incident with the other vertices~$\cV^{-j}$.
 As internal edges $i$ which are incident with vertices of both subgraphs are lost, we need to replace them by new external ``shadow'' edges 
 $e^{-1}_i$,~$e^{+1}_i$ on the respective subgraphs, see the upper graph of figure~\ref{fig:G_GC:glueing}.
 
 \begin{figure}[tb]
  \centering
  \begin{tikzpicture}[scale=0.45]
  \node (G1) at (0,6) {$\ttcG^{-1}$};
  \node[blue] (G2) at (13,6) {$\ttcG^{+1}$};

  \tikzstyle{every node}=[draw,shape=circle, fill, style={transform shape}];

  \node (A) at (2,5+0.25) { };
  \node (B) at (0, 0+0.25) { };
  \node (C) at (4, 0+0.25) { };
  \node[blue] (D) at (10, 0) { };
  \node[blue] (E) at (13, 3) { };
  \node[blue] (F) at (10, 5) { }; 
  
  \tikzstyle{every node}=[draw=none];
  
  \node (sA) at ($(A) + (0,-0.25)$) { };
  \node (sC) at ($(C) + (0,-0.25)$) { };
  \node (sD) at ($(D) + (0,0.25)$) { };
  \node (sF) at ($(F) + (0,0.25)$) { };

  % external edges
  \coordinate (A_1) at ($ (A) + (0, 1.4)$);
  \coordinate (B_1) at ($ (B) + (-1, 1)$);
  \coordinate (B_2) at ($ (B) + (0, -1.4)$);
  
  \coordinate (D_1) at ($ (D) + (1, -1)$);
  \coordinate (D_2) at ($ (D) + (-1, -1)$);
  \coordinate (E_1) at ($ (E) + (1, 1)$);
  \coordinate (E_2) at ($ (E) + (1, -1)$); 
  \coordinate (F_1) at ($ (F) + (0, 1.4)$); 
  
  \node[left] (tA) at (A) {$v_1$};
  \node[below left] (tB) at (B) {$v_2$};
  \node[below] (tC) at (C) {$v_3$};
  \node[below, blue] (tD) at (D) {$v_4$};
  \node[above left, blue] (tE) at (E) {$v_5$};
  \node[right, blue] (tF) at (F) {$v_6$}; 
  
  % internal edges G_1
  \draw[-] (A) -- (B);
  \draw[-] (B) -- (C);
  \draw[-] (C) -- (A);
  
  % internal edges G_2
  \draw[blue] (D) -- (E);
  \draw[blue] (D) .. controls (12, 1) .. (E);  
  \draw[blue] (D) .. controls (11, 2) .. (E);  
  \draw[blue] (D) -- (F);

  % external edges G_1
  \draw[-] (A) -- (A_1);
  \draw[densely dotted] (A_1) -- ($1.3*(A_1) - 0.3*(A)$);
  \draw[-] (B) -- (B_1);
  \draw[densely dotted] (B_1) -- ($1.3*(B_1) - 0.3*(B)$);
  \draw[-] (B) -- (B_2);
  \draw[densely dotted] (B_2) -- ($1.3*(B_2) - 0.3*(B)$);

  % external edges G_2
  \draw[blue] (D) -- (D_1);
  \draw[blue, densely dotted] (D_1) -- ($1.3*(D_1) - 0.3*(D)$);
  \draw[blue] (D) -- (D_2);
  \draw[blue, densely dotted] (D_2) -- ($1.3*(D_2) - 0.3*(D)$);
  \draw[blue] (E) -- (E_1);
  \draw[blue, densely dotted] (E_1) -- ($1.3*(E_1) - 0.3*(E)$);
  \draw[blue] (E) -- (E_2);  
  \draw[blue, densely dotted] (E_2) -- ($1.3*(E_2) - 0.3*(E)$);
  \draw[blue] (F) -- (F_1);
  \draw[blue, densely dotted] (F_1) -- ($1.3*(F_1) - 0.3*(F)$);
  
  % connecting edges
  \draw[-] (C) -- ($0.8*(sD)+0.2*(C)$);
  \draw[densely dashed] ($0.8*(sD)+0.2*(C)$) -- ($0.95*(sD)+0.05*(C)$);
  \draw[blue] (D) -- ($0.8*(sC)+0.2*(D)$);
  \draw[blue, densely dashed] ($0.8*(sC)+0.2*(D)$) -- ($0.95*(sC)+0.05*(D)$);
  
  \draw[-] (C) -- ($0.8*(sF)+0.2*(C)$);
  \draw[densely dashed] ($0.8*(sF)+0.2*(C)$) -- ($0.90*(sF)+0.10*(C)$);
  \draw[blue] (F) -- ($0.8*(sC)+0.2*(F)$);
  \draw[blue, densely dashed] ($0.8*(sC)+0.2*(F)$) -- ($0.90*(sC)+0.10*(F)$);
  
 % \draw[-] (A) .. controls ($0.5*(A)+0.5*(sF) + (0,0.5)$) .. ($(sF) + (-1.5,0.25)$);  
  %\draw[densely dashed] ($(sF) + (-1.5,0.25)$) -- ($(sF) + (-0.5,0.05)$);
 % \draw[-] (F) .. controls ($0.5*(A)+0.5*(F) + (0,0.5)$) .. ($(A) + (1.5,0.25)$);  
  %\draw[densely dashed] ($(A) + (1.5,0.25)$) -- ($(A) + (0.5,0.05)$);
 
  \draw[blue] (F) .. controls ($0.5*(sF)+0.5*(A) + (-1,0.3)$) .. ($(A) - (-1.5,-0.20)$);  
  \draw[blue, densely dashed] ($(A) - (-1.5,-0.20)$) -- ($(A) - (-0.5,-0.05)$);
  \draw[-] (A) .. controls ($0.5*(sF)+0.5*(A) + (-2,0.5)$) .. ($(sF) - (1.5,-0.10)$);  
  \draw[densely dashed] ($(sF) - (1.5,-0.10)$) -- ($(sF) - (0.5,0.05)$); 
 
  \draw[-] (A) .. controls ($0.5*(sA)+0.5*(F) - (-1,0.3)$) .. ($(F) + (-1.5,-0.20)$);  
  \draw[densely dashed] ($(F) + (-1.5,-0.20)$) -- ($(F) + (-0.5,-0.05)$);
  \draw[blue] (F) .. controls ($0.5*(sA)+0.5*(F) - (-2,0.5)$) .. ($(sA) + (1.5,-0.10)$);  
  \draw[blue, densely dashed] ($(sA) + (1.5,-0.10)$) -- ($(sA) + (0.5,-0.00)$); 
  
  \node[blue] (sFA1) at ($0.5*(F)+0.5*(A)+(1.0,0.9)$) {$e_{i_4}^{-1}$};
  \node (sAF1) at ($0.5*(F)+0.5*(A)+(-1.0,1.1)$) {$e_{i_4}^{+1}$};
  \node[blue] (sFA2) at ($0.5*(F)+0.5*(A)+(1.0,-1.1)$) {$e_{i_5}^{-1}$};
  \node (sAF2) at ($0.5*(F)+0.5*(A)+(-1.0,-0.9)$) {$e_{i_5}^{+1}$};
  \node[blue] (sFC) at ($0.5*(F)+0.5*(C)+(1.0,0)$) {$e_{i_6}^{-1}$};
  \node (sCF) at ($0.5*(F)+0.5*(C)+(-1.0,0)$) {$e_{i_6}^{+1}$};
  \node[blue] (sDC) at ($0.5*(D)+0.5*(C)+(0.0,-0.7)$) {$e_{i_7}^{-1}$};
  \node (sCD) at ($0.5*(D)+0.5*(C)+(0.0,+0.7)$) {$e_{i_7}^{+1}$};
  
  %\draw[-] (A) .. controls ($0.5*(A)+0.5*(sF) - (0,0.5)$) .. (sF);  
  %\draw[-] (sA) .. controls ($0.5*(sA)+0.5*(F) + (0,0.5)$) .. (F);  
  %\draw[-] (sA) .. controls ($0.5*(sA)+0.5*(F) - (0,0.5)$) .. (F);  
\end{tikzpicture}
  
  \vspace*{1em}
  
  \begin{tikzpicture}[scale=0.45]
  \node (G1) at (0,6) {$\cG^{-1}$};
  \node[blue] (G2) at (13,6) {$\cG^{+1}$};

  \tikzstyle{every node}=[draw,shape=circle, fill, style={transform shape}];

  \node (A) at (2,5+0.25) { };
  \node (B) at (0, 0+0.25) { };
  \node (C) at (4, 0+0.25) { };
  \node[blue] (D) at (10, 0) { };
  \node[blue] (E) at (13, 3) { };
  \node[blue] (F) at (10, 5) { }; 
  
  \tikzstyle{every node}=[draw=none];
  
  \node (sA) at ($(A) + (0,-0.25)$) { };
  \node (sC) at ($(C) + (0,-0.25)$) { };
  \node (sD) at ($(D) + (0,0.25)$) { };
  \node (sF) at ($(F) + (0,0.25)$) { };

  % external edges
  \coordinate (A_1) at ($ (A) + (0, 1.4)$);
  \coordinate (B_1) at ($ (B) + (-1, 1)$);
  \coordinate (B_2) at ($ (B) + (0, -1.4)$);
  
  \coordinate (D_1) at ($ (D) + (1, -1)$);
  \coordinate (D_2) at ($ (D) + (-1, -1)$);
  \coordinate (E_1) at ($ (E) + (1, 1)$);
  \coordinate (E_2) at ($ (E) + (1, -1)$); 
  \coordinate (F_1) at ($ (F) + (0, 1.4)$);

  \node[left] (tA) at (A) {$v_1$};
  \node[below left] (tB) at (B) {$v_2$};
  \node[below] (tC) at (C) {$v_3$};
  \node[below, blue] (tD) at (D) {$v_4$};
  \node[above left, blue] (tE) at (E) {$v_5$};
  \node[right, blue] (tF) at (F) {$v_6$};

  % internal edges G_1
  \draw[-] (A) -- (B);
  \draw[-] (B) -- (C);
  \draw[-] (C) -- (A);
  
  % internal edges G_2
  \draw[blue] (D) -- (E);
  \draw[blue] (D) .. controls (12, 1) .. (E);  
  \draw[blue] (D) .. controls (11, 2) .. (E);  
  \draw[blue] (D) -- (F);

  % external edges G_1
  \draw[-] (A) -- (A_1);
  \draw[densely dotted] (A_1) -- ($1.3*(A_1) - 0.3*(A)$);
  \draw[-] (B) -- (B_1);
  \draw[densely dotted] (B_1) -- ($1.3*(B_1) - 0.3*(B)$);
  \draw[-] (B) -- (B_2);
  \draw[densely dotted] (B_2) -- ($1.3*(B_2) - 0.3*(B)$);

  % external edges G_2
  \draw[blue] (D) -- (D_1);
  \draw[blue, densely dotted] (D_1) -- ($1.3*(D_1) - 0.3*(D)$);
  \draw[blue] (D) -- (D_2);
  \draw[blue, densely dotted] (D_2) -- ($1.3*(D_2) - 0.3*(D)$);
  \draw[blue] (E) -- (E_1);
  \draw[blue, densely dotted] (E_1) -- ($1.3*(E_1) - 0.3*(E)$);
  \draw[blue] (E) -- (E_2);  
  \draw[blue, densely dotted] (E_2) -- ($1.3*(E_2) - 0.3*(E)$);
  \draw[blue] (F) -- (F_1);
  \draw[blue, densely dotted] (F_1) -- ($1.3*(F_1) - 0.3*(F)$);
  
  \tikzstyle{every node}=[draw,shape=circle, style={transform shape}];
  \node[blue] (As) at (A) { $\cdot$ };
  \node[black] (Fs) at (F) { $\cdot$ };
  \node[blue] (Cs) at (C) { $\cdot$ };
  \node[black] (Ds) at (D) { $\cdot$ };
  
  \tikzstyle{every node}=[draw=none];
  
  % connecting edges
  \draw[-] (C).. controls ($0.9*(Ds)+0.1*(C)+(0,0.1)$) .. ($(Ds) + (-0.22, 0.12)$);
  %\draw[densely dashed] ($0.8*(sD)+0.2*(C)$) -- ($0.95*(sD)+0.05*(C)$);
  \draw[blue] (D) .. controls ($0.9*(Cs)+0.1*(D)-(0,0.1)$) .. ($(Cs) - (-0.22, 0.12)$);
  %\draw[blue, densely dashed] ($0.8*(sC)+0.2*(D)$) -- ($0.95*(sC)+0.05*(D)$);
  
  \draw[-] (C) .. controls ($0.9*(sF)+0.1*(C)$) .. ($(Fs) + (-0.22, -0.12)$);
  %\draw[densely dashed] ($0.8*(sF)+0.2*(C)$) -- ($0.90*(sF)+0.10*(C)$);
  \draw[blue] (F) .. controls ($0.9*(Cs)+0.1*(F)+(0.1,-0.1)$) .. ($(Cs) - (-0.22, -0.12)$);
  %\draw[blue, densely dashed] ($0.8*(sC)+0.2*(F)$) -- ($0.90*(sC)+0.10*(F)$);
  
 % \draw[-] (A) .. controls ($0.5*(A)+0.5*(sF) + (0,0.5)$) .. ($(sF) + (-1.5,0.25)$);  
  %\draw[densely dashed] ($(sF) + (-1.5,0.25)$) -- ($(sF) + (-0.5,0.05)$);
 % \draw[-] (F) .. controls ($0.5*(A)+0.5*(F) + (0,0.5)$) .. ($(A) + (1.5,0.25)$);  
  %\draw[densely dashed] ($(A) + (1.5,0.25)$) -- ($(A) + (0.5,0.05)$);
 
  \draw[blue] (F) .. controls ($0.5*(sF)+0.5*(A) + (-1,0.3)$) .. ($(As) - (-1.5,-0.20)$) node [red, midway, above] (sFA1) {$i_4$};  
  \draw[blue] ($(As) - (-1.5,-0.20)$) -- ($(As) + (0.23,0.02)$);
  \draw[-] (A) .. controls ($0.5*(sF)+0.5*(A) + (-2,0.5)$) .. ($(sF) - (1.5,-0.05)$);  
  \draw[-] ($(sF) - (1.5,-0.05)$) -- ($(Fs) + (-0.23,0.12)$); 
 
  \draw[-] (A) .. controls ($0.5*(sA)+0.5*(F) - (-1,0.3)$) .. ($(F) + (-1.5,-0.20)$)  node [red, midway, below] (sFA1) {$i_5$};  
  \draw[-] ($(F) + (-1.5,-0.20)$) -- ($(Fs) + (-0.23,-0.02)$);
  \draw[blue] (F) .. controls ($0.5*(sA)+0.5*(F) - (-2,0.5)$) .. ($(sA) + (1.5,-0.10)$);  
  \draw[blue] ($(sA) + (1.5,-0.10)$) -- ($(As) + (0.20,-0.15)$); 
  
  \node[red] (sCF) at ($0.5*(F)+0.5*(C)+(-1.0,0)$) {$i_6$};
  \node[red] (sDC) at ($0.5*(D)+0.5*(C)+(0.0,-0.5)$) {$i_7$};
  
  %\draw[-] (A) .. controls ($0.5*(A)+0.5*(sF) - (0,0.5)$) .. (sF);  
  %\draw[-] (sA) .. controls ($0.5*(sA)+0.5*(F) + (0,0.5)$) .. (F);  
  %\draw[-] (sA) .. controls ($0.5*(sA)+0.5*(F) - (0,0.5)$) .. (F);  
\end{tikzpicture}
  \caption[Decomposition and gluing of metric graphs]
          {Decomposition and gluing of metric graphs:
           The metric graph $\cG$ of \cite[Figure~1]{WernerMetricA} is decomposed into two ``subgraphs'' $\ttcG^{-1}$ and $\ttcG^{+1}$
           with vertices $\cV^{-1} = \{v_1, v_2, v_3\}$ and $\cV^{+1} = \{v_4, v_5, v_6\}$.
           The internal edges $i$ which are incident with vertices of both subgraphs are replaced by new external edges 
           $e^{-1}_i$, $e^{+1}_i$ on the respective subgraphs. 
           By performing the transformations explained in section~\ref{sec:G_GC:agenda},
           subsets of these ``subgraphs'' are mapped to the subsets~$\cG^{-1}$, $\cG^{+1}$ of the graph~$\cG$.}  \label{fig:G_GC:glueing}
 \end{figure} 
 
 By iteratively decomposing the subgraphs further up to the level of star graphs,
 we are able to apply our results of~\cite{WernerStar} and introduce Brownian motions on $\ttcG^{-1}$ and~$\ttcG^{+1}$ with the desired boundary behavior at their vertices. 
 In order to paste the two processes---and thus the two graphs---together, we need to cut out the excrescent parts of the external ``shadow'' edges 
 by removing them from the subgraphs and killing the partial Brownian motions whenever they hit the removed locations.
 The remaining parts of these external edges need to be reorientated where necessary (as vertices are always initial points of external edges)
 and then are mapped to the original internal edges in order to get proper subgraphs $\cG^{-1}$ and~$\cG^{+1}$ of the original graph~$\cG$,
 see the lower graph of figure~\ref{fig:G_GC:glueing}.
 
 The resulting Brownian motions on $\cG^{-1}$ and $\cG^{+1}$ can now be pasted together with the help of the alternating copies technique established 
 in~\cite[Section~3]{WernerConcat}, namely by reviving the subprocesses at the other subgraph whenever they leave
 the remaining part of one of their shadow vertices (and thus are killed).

 This construction approach will cause two main technical difficulties, which will prescribe the order of applied transformations:
 Firstly, the ``global'' jumps, that is jumps to other vertices or subgraphs, can only be implemented once the gluing is complete,
    as their jump destinations do not exist for the original Brownian motions on the subgraphs.
    They will be implemented by an instant return process with an appropriate revival measure.
 Moreover, the implementation of the killing portions $(p^v_1, v \in \cV)$ via jumps to the cemetery must be postponed until the gluing procedure and the introduction of 
    the global jumps is complete. The reason is that, as just mentioned, both procedures will apply the technique of identical/alternating copies, which is based on reviving the process
    and would therefore cancel any killing effect beforehand. 
 
 The above-mentioned restrictions and interactions of these techniques lead to some rather unwieldy ``workarounds'' in the upcoming complete construction.
 We are giving an overview of the construction steps now, 
 the mathematical justifications will follow in sections~\ref{sec:G_GC:killing on absorbing set}--\ref{sec:G_GC:completing}.
 
  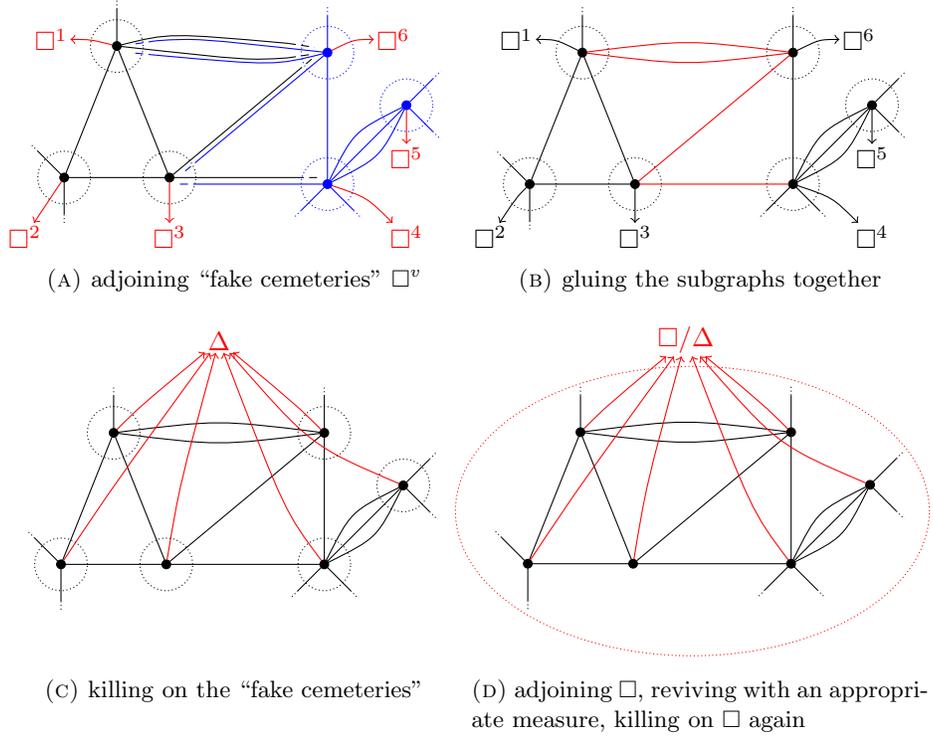
\begin{figure}[tb]
  \centering
    \begin{subfigure}[b]{0.48\textwidth}
     \begin{tikzpicture}[scale=0.35]
  \tikzstyle{every node}=[draw,shape=circle, fill, style={transform shape}];

  \node (A) at (2,5+0.25) { };
  \node (B) at (0, 0+0.25) { };
  \node (C) at (4, 0+0.25) { };
  \node[blue] (D) at (10, 0) { };
  \node[blue] (E) at (13, 3) { };
  \node[blue] (F) at (10, 5) { }; 
  
  \tikzstyle{every node}=[draw=none];
  
  \node (sA) at ($(A) + (0,-0.25)$) { };
  \node (sC) at ($(C) + (0,-0.25)$) { };
  \node (sD) at ($(D) + (0,0.25)$) { };
  \node (sF) at ($(F) + (0,0.25)$) { };

  % external edges
  \coordinate (A_1) at ($ (A) + (0, 1.4)$);
  \coordinate (B_1) at ($ (B) + (-1, 1)$);
  \coordinate (B_2) at ($ (B) + (0, -1.4)$);
  
  \coordinate (D_1) at ($ (D) + (1, -1)$);
  \coordinate (D_2) at ($ (D) + (-1, -1)$);
  \coordinate (E_1) at ($ (E) + (1, 1)$);
  \coordinate (E_2) at ($ (E) + (1, -1)$); 
  \coordinate (F_1) at ($ (F) + (0, 1.4)$);

  % internal edges G_1
  \draw[-] (A) -- (B);
  \draw[-] (B) -- (C);
  \draw[-] (C) -- (A);
  
  % internal edges G_2
  \draw[blue] (D) -- (E);
  \draw[blue] (D) .. controls (12, 1) .. (E);  
  \draw[blue] (D) .. controls (11, 2) .. (E);  
  \draw[blue] (D) -- (F);

  % external edges G_1
  \draw[-] (A) -- (A_1);
  \draw[densely dotted] (A_1) -- ($1.3*(A_1) - 0.3*(A)$);
  \draw[-] (B) -- (B_1);
  \draw[densely dotted] (B_1) -- ($1.3*(B_1) - 0.3*(B)$);
  \draw[-] (B) -- (B_2);
  \draw[densely dotted] (B_2) -- ($1.3*(B_2) - 0.3*(B)$);

  % external edges G_2
  \draw[blue] (D) -- (D_1);
  \draw[blue, densely dotted] (D_1) -- ($1.3*(D_1) - 0.3*(D)$);
  \draw[blue] (D) -- (D_2);
  \draw[blue, densely dotted] (D_2) -- ($1.3*(D_2) - 0.3*(D)$);
  \draw[blue] (E) -- (E_1);
  \draw[blue, densely dotted] (E_1) -- ($1.3*(E_1) - 0.3*(E)$);
  \draw[blue] (E) -- (E_2);  
  \draw[blue, densely dotted] (E_2) -- ($1.3*(E_2) - 0.3*(E)$);
  \draw[blue] (F) -- (F_1);
  \draw[blue, densely dotted] (F_1) -- ($1.3*(F_1) - 0.3*(F)$);
  
  % connecting edges
  \draw[-] (C) -- ($0.8*(sD)+0.2*(C)$);
  \draw[densely dashed] ($0.8*(sD)+0.2*(C)$) -- ($0.95*(sD)+0.05*(C)$);
  \draw[blue] (D) -- ($0.8*(sC)+0.2*(D)$);
  \draw[blue, densely dashed] ($0.8*(sC)+0.2*(D)$) -- ($0.95*(sC)+0.05*(D)$);
  
  \draw[-] (C) -- ($0.8*(sF)+0.2*(C)$);
  \draw[densely dashed] ($0.8*(sF)+0.2*(C)$) -- ($0.90*(sF)+0.10*(C)$);
  \draw[blue] (F) -- ($0.8*(sC)+0.2*(F)$);
  \draw[blue, densely dashed] ($0.8*(sC)+0.2*(F)$) -- ($0.90*(sC)+0.10*(F)$);
  
 % \draw[-] (A) .. controls ($0.5*(A)+0.5*(sF) + (0,0.5)$) .. ($(sF) + (-1.5,0.25)$);  
  %\draw[densely dashed] ($(sF) + (-1.5,0.25)$) -- ($(sF) + (-0.5,0.05)$);
 % \draw[-] (F) .. controls ($0.5*(A)+0.5*(F) + (0,0.5)$) .. ($(A) + (1.5,0.25)$);  
  %\draw[densely dashed] ($(A) + (1.5,0.25)$) -- ($(A) + (0.5,0.05)$);
 
  \draw[blue] (F) .. controls ($0.5*(sF)+0.5*(A) + (-1,0.3)$) .. ($(A) - (-1.5,-0.20)$);  
  \draw[blue, densely dashed] ($(A) - (-1.5,-0.20)$) -- ($(A) - (-0.5,-0.05)$);
  \draw[-] (A) .. controls ($0.5*(sF)+0.5*(A) + (-2,0.5)$) .. ($(sF) - (1.5,-0.10)$);  
  \draw[densely dashed] ($(sF) - (1.5,-0.10)$) -- ($(sF) - (0.5,0.05)$); 
 
  \draw[-] (A) .. controls ($0.5*(sA)+0.5*(F) - (-1,0.3)$) .. ($(F) + (-1.5,-0.20)$);  
  \draw[densely dashed] ($(F) + (-1.5,-0.20)$) -- ($(F) + (-0.5,-0.05)$);
  \draw[blue] (F) .. controls ($0.5*(sA)+0.5*(F) - (-2,0.5)$) .. ($(sA) + (1.5,-0.10)$);  
  \draw[blue, densely dashed] ($(sA) + (1.5,-0.10)$) -- ($(sA) + (0.5,-0.00)$);

  %\draw[-] (A) .. controls ($0.5*(A)+0.5*(sF) - (0,0.5)$) .. (sF);  
  %\draw[-] (sA) .. controls ($0.5*(sA)+0.5*(F) + (0,0.5)$) .. (F);  
  %\draw[-] (sA) .. controls ($0.5*(sA)+0.5*(F) - (0,0.5)$) .. (F);  
  
  \tikzstyle{every node}=[red, inner sep = 1pt];
  
  \node (dA) at (-0.5,5.5) {$\square^1$};
  \draw[red, ->] (A) .. controls (1,5.5) .. (dA);
  
  \node (dB) at (-1.5,-2) {$\square^2$};
  \draw[red, ->] (B) .. controls (-0.5,-0.5).. (dB);
  
  \node (dC) at (4,-2) {$\square^3$};
  \draw[red, ->] (C) .. controls (4,-0.5).. (dC);
  
  \node (dD) at (13,-2) {$\square^4$};
  \draw[red, ->] (D) .. controls (11.5,-0.5).. (dD);
  
  \node (dE) at (13,1) {$\square^5$};
  \draw[red, ->] (E) -- (dE);
  
  \node (dF) at (12.5,5.5) {$\square^6$};
  \draw[red, ->] (F) .. controls (11,5.5) .. (dF);
  
  	  \draw[densely dotted] (A) ellipse (1 and 1);
	  \draw[densely dotted] (B) ellipse (1 and 1);
	  \draw[densely dotted] (C) ellipse (1 and 1);
	  \draw[blue, densely dotted] (D) ellipse (1 and 1);
	  \draw[blue, densely dotted] (E) ellipse (1 and 1);
	  \draw[blue, densely dotted] (F) ellipse (1 and 1);
\end{tikzpicture}
     \caption{adjoining ``fake cemeteries'' $\sq^v$} \label{fig:G_GC:construction a}
    \end{subfigure}
    \begin{subfigure}[b]{0.48\textwidth}
     \begin{tikzpicture}[scale=0.35]
  \tikzstyle{every node}=[draw,shape=circle, fill, style={transform shape}];

  \node (A) at (2, 5) { };
  \node (B) at (0, 0) { };
  \node (C) at (4, 0) { };
  \node (D) at (10, 0) { };
  \node (E) at (13, 3) { };
  \node (F) at (10, 5) { }; 
  
  \tikzstyle{every node}=[];

  % external edges
  \coordinate (A_1) at ($ (A) + (0, 1.4)$);
  \coordinate (B_1) at ($ (B) + (-1, 1)$);
  \coordinate (B_2) at ($ (B) + (0, -1.4)$);
  
  \coordinate (D_1) at ($ (D) + (1, -1)$);
  \coordinate (D_2) at ($ (D) + (-1, -1)$);
  \coordinate (E_1) at ($ (E) + (1, 1)$);
  \coordinate (E_2) at ($ (E) + (1, -1)$); 
  \coordinate (F_1) at ($ (F) + (0, 1.4)$);

  % internal edges G_1
  \draw[-] (A) -- (B);
  \draw[-] (B) -- (C);
  \draw[-] (C) -- (A);
  
  % internal edges G_2
  \draw[-] (D) -- (E);
  \draw[-] (D) .. controls (12, 1) .. (E);  
  \draw[-] (D) .. controls (11, 2) .. (E);  
  \draw[-] (D) -- (F);

  % external edges G_1
  \draw[-] (A) -- (A_1);
  \draw[densely dotted] (A_1) -- ($1.3*(A_1) - 0.3*(A)$);
  \draw[-] (B) -- (B_1);
  \draw[densely dotted] (B_1) -- ($1.3*(B_1) - 0.3*(B)$);
  \draw[-] (B) -- (B_2);
  \draw[densely dotted] (B_2) -- ($1.3*(B_2) - 0.3*(B)$);

  % external edges G_2
  \draw[-] (D) -- (D_1);
  \draw[densely dotted] (D_1) -- ($1.3*(D_1) - 0.3*(D)$);
  \draw[-] (D) -- (D_2);
  \draw[densely dotted] (D_2) -- ($1.3*(D_2) - 0.3*(D)$);
  \draw[-] (E) -- (E_1);
  \draw[densely dotted] (E_1) -- ($1.3*(E_1) - 0.3*(E)$);
  \draw[-] (E) -- (E_2);  
  \draw[densely dotted] (E_2) -- ($1.3*(E_2) - 0.3*(E)$);
  \draw[-] (F) -- (F_1);
  \draw[densely dotted] (F_1) -- ($1.3*(F_1) - 0.3*(F)$);
  
  % connecting edges
  \draw[red] (C) -- (D);
  \draw[red] (C) -- (F);
  \draw[red] (A) .. controls ($0.5*(A)+0.5*(F) + (0,0.5)$) .. (F);  
  \draw[red] (A) .. controls ($0.5*(A)+0.5*(F) - (0,0.5)$) .. (F);  
  
  \tikzstyle{every node}=[inner sep = 1pt];
  
  \node (dA) at (-0.5,5.5) {$\square^1$};
  \draw[->] (A) .. controls (1,5.5) .. (dA);
  
  \node (dB) at (-1.5,-2) {$\square^2$};
  \draw[->] (B) .. controls (-0.5,-0.5).. (dB);
  
  \node (dC) at (4,-2) {$\square^3$};
  \draw[->] (C) .. controls (4,-0.5).. (dC);
  
  \node (dD) at (13,-2) {$\square^4$};
  \draw[->] (D) .. controls (11.5,-0.5).. (dD);
  
  \node (dE) at (13,1) {$\square^5$};
  \draw[->] (E) -- (dE);
  
  \node (dF) at (12.5,5.5) {$\square^6$};
  \draw[->] (F) .. controls (11,5.5) .. (dF);
  
  	  \draw[densely dotted] (A) ellipse (1 and 1);
	  \draw[densely dotted] (B) ellipse (1 and 1);
	  \draw[densely dotted] (C) ellipse (1 and 1);
	  \draw[densely dotted] (D) ellipse (1 and 1);
	  \draw[densely dotted] (E) ellipse (1 and 1);
	  \draw[densely dotted] (F) ellipse (1 and 1);
\end{tikzpicture}
     \caption{gluing the subgraphs together} \label{fig:G_GC:construction b}
    \end{subfigure}
    
    \vspace*{1em}
    
    \begin{subfigure}[t]{0.48\textwidth}
     \hspace*{-1.05em}
     \begin{tikzpicture}[scale=0.35]
  \tikzstyle{every node}=[draw,shape=circle, fill, style={transform shape}];

  \node (A) at (2, 5) { };
  \node (B) at (0, 0) { };
  \node (C) at (4, 0) { };
  \node (D) at (10, 0) { };
  \node (E) at (13, 3) { };
  \node (F) at (10, 5) { }; 
  
  \tikzstyle{every node}=[];

  % external edges
  \coordinate (A_1) at ($ (A) + (0, 1.4)$);
  \coordinate (B_1) at ($ (B) + (-1, 1)$);
  \coordinate (B_2) at ($ (B) + (0, -1.4)$);
  
  \coordinate (D_1) at ($ (D) + (1, -1)$);
  \coordinate (D_2) at ($ (D) + (-1, -1)$);
  \coordinate (E_1) at ($ (E) + (1, 1)$);
  \coordinate (E_2) at ($ (E) + (1, -1)$); 
  \coordinate (F_1) at ($ (F) + (0, 1.4)$);

  % internal edges G_1
  \draw[-] (A) -- (B);
  \draw[-] (B) -- (C);
  \draw[-] (C) -- (A);
  
  % internal edges G_2
  \draw[-] (D) -- (E);
  \draw[-] (D) .. controls (12, 1) .. (E);  
  \draw[-] (D) .. controls (11, 2) .. (E);  
  \draw[-] (D) -- (F);

  % external edges G_1
  \draw[-] (A) -- (A_1);
  \draw[densely dotted] (A_1) -- ($1.3*(A_1) - 0.3*(A)$);
  \draw[-] (B) -- (B_1);
  \draw[densely dotted] (B_1) -- ($1.3*(B_1) - 0.3*(B)$);
  \draw[-] (B) -- (B_2);
  \draw[densely dotted] (B_2) -- ($1.3*(B_2) - 0.3*(B)$);

  % external edges G_2
  \draw[-] (D) -- (D_1);
  \draw[densely dotted] (D_1) -- ($1.3*(D_1) - 0.3*(D)$);
  \draw[-] (D) -- (D_2);
  \draw[densely dotted] (D_2) -- ($1.3*(D_2) - 0.3*(D)$);
  \draw[-] (E) -- (E_1);
  \draw[densely dotted] (E_1) -- ($1.3*(E_1) - 0.3*(E)$);
  \draw[-] (E) -- (E_2);  
  \draw[densely dotted] (E_2) -- ($1.3*(E_2) - 0.3*(E)$);
  \draw[-] (F) -- (F_1);
  \draw[densely dotted] (F_1) -- ($1.3*(F_1) - 0.3*(F)$);
  
  % connecting edges
  \draw[-] (C) -- (D);
  \draw[-] (C) -- (F);
  \draw[-] (A) .. controls ($0.5*(A)+0.5*(F) + (0,0.5)$) .. (F);  
  \draw[-] (A) .. controls ($0.5*(A)+0.5*(F) - (0,0.5)$) .. (F);  
  
  \tikzstyle{every node}=[inner sep = 1pt];
  
  \node[red] (dA) at (6,8.5) {$\Delta$};
  \draw[red, ->] (A) .. controls (3,6) .. (dA);
  \draw[red, ->] (B) .. controls (2,3).. (dA);
  \draw[red, ->] (C) .. controls (4.5,3).. (dA);
  \draw[red, ->] (D) .. controls (8.5,2).. (dA);
  \draw[red, ->] (E)  .. controls (9.5,4.5).. (dA);
  \draw[red, ->] (F) .. controls (9,6) .. (dA);
  
  	  \draw[densely dotted] (A) ellipse (1 and 1);
	  \draw[densely dotted] (B) ellipse (1 and 1);
	  \draw[densely dotted] (C) ellipse (1 and 1);
	  \draw[densely dotted] (D) ellipse (1 and 1);
	  \draw[densely dotted] (E) ellipse (1 and 1);
	  \draw[densely dotted] (F) ellipse (1 and 1);
	  
	  \draw[draw=none]  (6.25,2) ellipse (9 and 5.5);
\end{tikzpicture}
     \vspace*{-1em}
     \caption{killing on the ``fake cemeteries''} \label{fig:G_GC:construction c}
    \end{subfigure}
    \begin{subfigure}[t]{0.48\textwidth}
     %\disableHboxWarning
     \hspace*{-0.85em}
     \begin{tikzpicture}[scale=0.35]
  \tikzstyle{every node}=[draw,shape=circle, fill, style={transform shape}];

  \node (A) at (2, 5) { };
  \node (B) at (0, 0) { };
  \node (C) at (4, 0) { };
  \node (D) at (10, 0) { };
  \node (E) at (13, 3) { };
  \node (F) at (10, 5) { }; 
  
  \tikzstyle{every node}=[];

  % external edges
  \coordinate (A_1) at ($ (A) + (0, 1.4)$);
  \coordinate (B_1) at ($ (B) + (-1, 1)$);
  \coordinate (B_2) at ($ (B) + (0, -1.4)$);
  
  \coordinate (D_1) at ($ (D) + (1, -1)$);
  \coordinate (D_2) at ($ (D) + (-1, -1)$);
  \coordinate (E_1) at ($ (E) + (1, 1)$);
  \coordinate (E_2) at ($ (E) + (1, -1)$); 
  \coordinate (F_1) at ($ (F) + (0, 1.4)$);

  % internal edges G_1
  \draw[-] (A) -- (B);
  \draw[-] (B) -- (C);
  \draw[-] (C) -- (A);
  
  % internal edges G_2
  \draw[-] (D) -- (E);
  \draw[-] (D) .. controls (12, 1) .. (E);  
  \draw[-] (D) .. controls (11, 2) .. (E);  
  \draw[-] (D) -- (F);

  % external edges G_1
  \draw[-] (A) -- (A_1);
  \draw[densely dotted] (A_1) -- ($1.3*(A_1) - 0.3*(A)$);
  \draw[-] (B) -- (B_1);
  \draw[densely dotted] (B_1) -- ($1.3*(B_1) - 0.3*(B)$);
  \draw[-] (B) -- (B_2);
  \draw[densely dotted] (B_2) -- ($1.3*(B_2) - 0.3*(B)$);

  % external edges G_2
  \draw[-] (D) -- (D_1);
  \draw[densely dotted] (D_1) -- ($1.3*(D_1) - 0.3*(D)$);
  \draw[-] (D) -- (D_2);
  \draw[densely dotted] (D_2) -- ($1.3*(D_2) - 0.3*(D)$);
  \draw[-] (E) -- (E_1);
  \draw[densely dotted] (E_1) -- ($1.3*(E_1) - 0.3*(E)$);
  \draw[-] (E) -- (E_2);  
  \draw[densely dotted] (E_2) -- ($1.3*(E_2) - 0.3*(E)$);
  \draw[-] (F) -- (F_1);
  \draw[densely dotted] (F_1) -- ($1.3*(F_1) - 0.3*(F)$);
  
  % connecting edges
  \draw[-] (C) -- (D);
  \draw[-] (C) -- (F);
  \draw[-] (A) .. controls ($0.5*(A)+0.5*(F) + (0,0.5)$) .. (F);  
  \draw[-] (A) .. controls ($0.5*(A)+0.5*(F) - (0,0.5)$) .. (F);  
  
  \tikzstyle{every node}=[inner sep = 1pt];
  
  \node[red] (dA) at (6,8.5) {$\square/\Delta$};
  \draw[red, ->] (A) .. controls (3,6) .. (dA);
  \draw[red, ->] (B) .. controls (2,3).. (dA);
  \draw[red, ->] (C) .. controls (4.5,3).. (dA);
  \draw[red, ->] (D) .. controls (8.5,2).. (dA);
  \draw[red, ->] (E)  .. controls (9.5,4.5).. (dA);
  \draw[red, ->] (F) .. controls (9,6) .. (dA);

  \draw[red, densely dotted]  (6.25,2) ellipse (9 and 5.5);
\end{tikzpicture}
     \vspace*{-1em}
     \caption{adjoining $\sq$, reviving with an appropriate measure, killing on $\sq$ again} \label{fig:G_GC:construction d}
    \end{subfigure}    
%       \subfloat[adjoining ``fake cemeteries'' $\sq^v$ \label{fig:G_GC:construction a}]{
%         \input{G_GC_constr_1.tex}
%       }
%       \hfill 
%       \subfloat[gluing the subgraphs together \label{fig:G_GC:construction b}]{
%         \input{G_GC_constr_2.tex}
%       }
%     
%       \vspace*{1em}
%     
%       \subfloat[killing on the ``fake cemeteries'' \label{fig:G_GC:construction c}]{
%         \hspace*{-1.5em}
%         \input{G_GC_constr_3.tex}
%         \vspace*{-1em}
%       }
%       \hfill
%       \subfloat[adjoining $\sq$, reviving with an appropriate measure, killing on $\sq$ again \label{fig:G_GC:construction d}]{
%         \hspace*{-1.85em}
%         \input{G_GC_constr_4.tex}
%         \hspace*{-1.15em}
%         \vspace*{-1em} 
%       }        
  \caption[Completing the construction of Brownian motions on a metric graph]
          {Completing the construction of Brownian motions on a metric graph:
           Illustrated are the steps that are
           performed in the construction of the target Brownian motion on the complete graph, 
           when starting with Brownian motions on the subgraphs which already implement the correct reflection, stickiness and ``local'' jump parameters.
           The dotted lines indicate the range of the implemented jump measures.}  \label{fig:G_GC:construction}
 \end{figure}

Assume that we are given a metric graph $\cG = (\cV, \cI, \cE, \cLV, \cR)$ and boundary weights
 \begin{align*}
  \big( p^v_1, (p^{v,l}_2)_{l \in \cL(v)}, p^v_3, p^v_4 \big)_{v \in \cV}
 \end{align*}
which satisfy the conditions of Feller's theorem~\cite[Theorem~1.1]{WernerMetricA}.

As we cannot introduce the distant jumps yet, we choose
for each $v \in \cV$ a distance $\d^v > 0$ such that $\d^v$ is smaller than the lengths of all edges emanating from~$v$, and
define the restricted jump measure 
 \begin{align*}
  q^v_4 & := p^v_4 \big( \, \cdot \, \cap \BB_{\d^v}(v) \big)
 \end{align*}
on the ball~$\BB_{\d^v}(v)$ around~$v$ with radius $\d^v$, and the ``extended'' killing parameter
  \begin{align*}
  q^v_1 & := p^v_1 + p^v_4 \big( \comp \BB_{\d^v}(v) \big). 
 \end{align*}
 
We are going to construct the complete Brownian motion with the just given boundary weights iteratively. That is, we decompose 
the metric graph into two subgraphs $\ttcG^{-1}$ and~$\ttcG^{+1}$ as explained above, and assume that there exist two Brownian motions
$\ttX^{-1}, \ttX^{+1}$ thereon which implement the boundary conditions
 \begin{align*} 
  \big( q^v_1, (p^{v,l}_2)_{l \in \ttcL^j(v)}, p^v_3, q^v_4 \big)_{v \in \cV^j}, \quad j \in \{-1,+1\},
 \end{align*}
where we set the reflection parameters for the adjoined edges to $\smash{p_2^{v,e^{j}_i} = p_2^{v,i}}$.

As the gluing procedure only works for processes with no additional killing effects at the vertices,
we further adjoin for every vertex $v \in \cV$ an absorbing ``fake'' cemetery point $\sq^v$ to the
respective subgraph $\ttcG^{j}$, and assimilate the killing parameter into the jump measure by reviving the subprocesses at $\sq^v$ 
whenever they die at $v$, see figure~\textsc{\ref{fig:G_GC:construction a}}.
Then the new processes possess the boundary conditions
 \begin{align*}
  \big( 0, (p^{v,l}_2)_{l \in \ttcL^j(v)}, p^v_3, q^v_1 \, \e_{\sq^v} + q^v_4 \big)_{v \in \cV^j}, \quad j \in \{-1,+1\}.
 \end{align*}

Next, we glue both processes together and obtain a process on the complete graph $\cG$, as illustrated in figure~\textsc{\ref{fig:G_GC:construction b}}, with boundary conditions 
 \begin{align*}
  \big( 0, (p^{v,l}_2)_{l \in \cL(v)}, p^v_3, q^v_1 \, \e_{\sq^v} + q^v_4 \big)_{v \in \cV}.
 \end{align*}
 
In order to introduce the global jumps, we split the jump to $\sq^v$, with original weight
$q^v_1 = p^v_1 + p^v_4 \big( \comp \BB_{\d^v}(v) \big)$, into killing with weight $p^v_1$
and non-local jumps relative to the measure $p^v_4 \big( \, \cdot \, \cap \comp \BB_{\d^v}(v) \big)$.
To this end, we need to kill the process again: By mapping the absorbing points $\{ \sq^v, v \in \cV \}$ to the ``real'' cemetery $\D$,
see figure~\textsc{\ref{fig:G_GC:construction c}}, we obtain a newly killed process with boundary conditions
 \begin{align*}
  \big( q^v_1, (p^{v,l}_2)_{l \in \cL(v)}, p^v_3, q^v_4 \big)_{v \in \cV}.
 \end{align*}
We adjoin another absorbing ``fake'' cemetery point $\sq$ and construct the next process as instant revival process with revival distribution
 $\big( p^v_1 \, \e_{\sq} + p^v_4 \big( \, \cdot \, \cap \comp \BB_{\d^v}(v) \big) \big) / q^v_1$.
This process now implements jumps relative to the measure 
 $p^v_1 \, \e_{\sq^v} + p^v_4 \big( \, \cdot \, \cap \comp \BB_{\d^v}(v) \big)$, 
which adds to the already existing jump measure $q^v_4 = p^v_4 \big( \, \cdot \, \cap \BB_{\d^v}(v) \big)$.
Thus, this process satisfies the boundary conditions
 \begin{align*}
  \big( 0, (p^{v,l}_2)_{l \in \cL(v)}, p^v_3, p^v_1 \, \e_{\sq} + p^v_4 \big)_{v \in \cV}.
 \end{align*} 
 
Finally, we transform the jumps to $\sq$ into killing by mapping $\sq$ to $\D$, and obtain the complete boundary condition 
 \begin{align*}
  \big( p^v_1, (p^{v,l}_2)_{l \in \cL(v)}, p^v_3, p^v_4 \big)_{v \in \cV}.
 \end{align*} 
 
As seen above, we need to perform many process transformations in the complete construction,
while keeping track of the resulting boundary conditions.
In order to keep our results comprehensible, we first analyze the two main components---killing
 on an absorbing set and introduction of jumps via the instant revival process---together with
their effects on the generator separately in the next two sections.

\section{Killing a Brownian Motion on an Absorbing Set} \label{sec:G_GC:killing on absorbing set}
 
In this section, we examine how killing a Brownian motion on an absorbing set~$F$
affects the boundary conditions of its generator. It will turn out that the jump portion which originally led to $F$
is just transformed into the killing portion, as any jump to $F$ is now immediately triggering the killing.
 
We implement the killing transformation by mapping the absorbing set $F$ to $\D$,
that is, we consider the process $\psi(X)$ for the map
 \begin{align} \label{eq:G_GC:killing mapping}
  \psi \colon \cG \rightarrow \cG \bs F, ~ x \mapsto \psi(x) :=
    \begin{cases}
     x,  & x \in \cG \bs F, \\
     \D, & x \in F.
    \end{cases}
 \end{align}
It has been shown in Appendix~\ref{app:C_MS:killing on absorbing set} that the transformed process $\psi(X)$ is a right process if $X$ is a right process and $F$ is an isolated and absorbing set for $X$.

We are able to obtain the following set of necessary boundary conditions by directly computing the generator of the transformed process:

\begin{lemma} \label{lem:G_GC:killing on absorbing set, generator data}
 Let $X$ be a Brownian motion on $\cG$ with generator 
  \begin{align*}
   \sD(A^X)
   \subseteq \Big\{ & f \in \cC^2_0(\cG): \forall v \in \cV: \\
       &  c^v_1 f(v) - \sum_{l \in \cL(v)} c^{v,l}_2 f_l'(v) + \frac{c_3}{2} f''(v) 
         - \int_{\cG \bs \{v\}} \big( f(g) - f(v) \big) \, c^v_4(dg) = 0 \Big\},
  \end{align*}
 and $F \subsetneq \cG$ be an isolated, absorbing set for~$X$.
 Let $Y := \psi(X)$ be the process on~$\cG \bs F$ resulting from killing~$X$ on~$F$, 
 with~$\psi$ as given in equation~\eqref{eq:G_GC:killing mapping}.
 Then the domain of the generator of~$Y$ satisfies
  \begin{align*}
   & \sD(A^Y) 
   \subseteq  \Big\{  f \in \cC^2_0(\cG \bs F): \forall v \in \cV \bs F: \\
        & \quad \big( c^v_1 + c^v_4(F) \big) f(v) - \sum_{l \in \cL(v)} c^{v,l}_2 f_l'(v) + \frac{c_3}{2} f''(v) 
         - \int_{\cG \bs (F \cup \{v\})} \hspace*{-25pt} \big( f(g) - f(v) \big) \, c^v_4(dg) = 0 \Big\}.
  \end{align*} 
\end{lemma}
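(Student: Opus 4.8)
The plan is to compute the generator $A^Y$ directly from the generator $A^X$ by exploiting the fact that $Y=\psi(X)$ differs from $X$ only in that excursions reaching $F$ are turned into deaths. Since $F$ is isolated and absorbing for $X$, a path of $X$ started in $\cG\bs F$ stays in $\cG\bs F$ until the first hit of $F$ (if any), and at that instant it is absorbed; under $\psi$ this hitting time becomes the lifetime of $Y$. Hence for $f\in\cC_0(\cG\bs F)$, extended by $0$ on $F$ (equivalently, precomposed so that $f\circ\psi$ vanishes on $F$ and on $\Delta$), the semigroups are related by $(T^Y_t f)(x)=\EV_x[f(X_t);\,t<\sigma_F]=(T^X_t\,\tilde f)(x)$ for $x\in\cG\bs F$, where $\tilde f := f$ on $\cG\bs F$ and $\tilde f:=0$ on $F$. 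First I would make this semigroup identity precise using the strong Markov property at $\sigma_F$ and absorption, then differentiate at $t=0$ to get: $f\in\sD(A^Y)$ with $A^Y f=g$ iff $\tilde f\in\sD(A^X)$ with $A^X\tilde f=\tilde g$ on $\cG\bs F$. This is the standard "killing on a set = restriction of the generator" mechanism, and Appendix~\ref{app:C_MS:killing on absorbing set} already provides that $Y$ is a right process, so the generator is well-defined and this identification is legitimate.

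Next I would translate the boundary condition. If $f\in\sD(A^Y)$, then $\tilde f\in\sD(A^X)\subseteq\cC^2_0(\cG)$, so in particular $\tilde f$ is twice continuously differentiable and vanishes on $F$; restricting back to $\cG\bs F$ we get $f\in\cC^2_0(\cG\bs F)$. Now fix $v\in\cV\bs F$ and write out the Feller--Wentzell condition for $\tilde f$ at $v$:
\begin{align*}
 c^v_1 \tilde f(v) - \sum_{l \in \cL(v)} c^{v,l}_2 \tilde f_l'(v) + \frac{c_3}{2} \tilde f''(v) - \int_{\cG \bs \{v\}} \big( \tilde f(g) - \tilde f(v) \big) \, c^v_4(dg) = 0.
\end{align*}
Because $v\notin F$ and $\tilde f$ agrees with $f$ on a neighbourhood of $v$ (as $F$ is isolated), the values $\tilde f(v)$, $\tilde f_l'(v)$, $\tilde f''(v)$ equal $f(v)$, $f_l'(v)$, $f''(v)$. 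The only genuine change is in the integral: split $\cG\bs\{v\}=F\ \uplus\ \big(\cG\bs(F\cup\{v\})\big)$. On $F$ we have $\tilde f(g)=0$, so
\begin{align*}
 -\int_{F} \big( \tilde f(g) - \tilde f(v) \big)\, c^v_4(dg) = -\int_F \big(0 - f(v)\big)\, c^v_4(dg) = c^v_4(F)\, f(v),
\end{align*}
and on the complement $\tilde f=f$, giving the remaining integral over $\cG\bs(F\cup\{v\})$. Substituting yields exactly
\begin{align*}
 \big( c^v_1 + c^v_4(F) \big) f(v) - \sum_{l \in \cL(v)} c^{v,l}_2 f_l'(v) + \frac{c_3}{2} f''(v) - \int_{\cG \bs (F \cup \{v\})} \big( f(g) - f(v) \big) \, c^v_4(dg) = 0,
\end{align*}
which is the claimed inclusion. (One should also check the normalization remains finite: $c^v_4(F)\le c^v_4(\cG\bs\{v\})$ is controlled since, by construction, $F$ will be at positive distance from $v$, so the extra mass is absorbed harmlessly into the killing coefficient.)

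The main obstacle is making the semigroup identity $T^Y_t f = T^X_t \tilde f$ on $\cG\bs F$ rigorous at the level of cores and generators, rather than merely heuristically: one must argue that $\tilde f$ (the zero-extension of a $\cC^2_0(\cG\bs F)$ function in $\sD(A^Y)$) genuinely lies in $\sD(A^X)$ and not just in some weaker domain, and conversely that the restriction map sends $\sD(A^X)$-functions vanishing on $F$ into $\sD(A^Y)$. Since $F$ is isolated, $\tilde f$ is automatically $\cC^2$ across $\partial F$ (there is a gap), so no smoothness is lost, and the identification reduces to the general principle that killing a right process on an absorbing isolated set corresponds to the part-process on the complement; I would invoke the construction in Appendix~\ref{app:C_MS:killing on absorbing set} for this and cite the standard generator-of-a-killed-process formula. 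The rest is the bookkeeping above, which is routine. I would close by noting that, as elsewhere in this series, we only claim the inclusion $\sD(A^Y)\subseteq\{\dots\}$, so no surjectivity or core argument on the right-hand side is needed.
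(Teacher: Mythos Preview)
Your proposal is correct and follows essentially the same route as the paper: show that for $f\in\sD(A^Y)$ the zero-extension $\tilde f=f\circ\psi$ lies in $\sD(A^X)$, then plug $\tilde f$ into the boundary condition for $X$ at $v\in\cV\bs F$ and split the jump integral over $F$ and its complement. The only cosmetic difference is that the paper verifies $\tilde f\in\sD(A^X)$ by computing the pointwise generator limit directly (on $\cG\bs F$ it equals $A^Y f$, on $F$ it equals $0$ since $F$ is absorbing), rather than passing through the semigroup identity $T^Y_t f = T^X_t\tilde f$; this sidesteps the ``main obstacle'' you flag, since the limit on $F$ is trivially zero and no core argument is needed.
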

\begin{proof}
 For all $f \in \sD(A^Y)$,
 we have for $g \in \cG \bs F$
  \begin{align*}
   A^X (f \circ \psi) (g)
   & = \lim_{t \downarrow 0} \frac{\EV_g \big( f \circ \psi(X_t) \big) - f \circ \psi(g) }{t} \\
   & = \lim_{t \downarrow 0} \frac{\EV_g \big( f(Y_t) \big) - f(g)}{t},
  \end{align*}
 which exists and is equal to $A^Y f(g)$. 
 On the other hand, if $g \in F$, then $X_t \in F$ holds for all $t \geq 0$, $\PV_g$-a.s., because $F$ is absorbing for $X$, and it follows that
  \begin{align*}
   A^X (f \circ \psi) (g)
   & = \lim_{t \downarrow 0} \frac{\EV_g \big( f \circ \psi(X_t) \big) - f \circ \psi(g) }{t}
     = \lim_{t \downarrow 0} \frac{\EV_g \big( f (\D) \big) - f (\D) }{t}
     = 0.
  \end{align*}
 Thus, we have $f \circ \psi \in \sD(A^X)$ for all $f \in \sD(A^Y)$, and $A^X (f \circ \psi) = A^Y f \, \1_{\comp F}$ in this case.
 %immediately yields:
 % \begin{align*}
 %   f \circ \psi \in \sD(A^X) \quad \Leftrightarrow \quad f \in \sD(A^Y).
 % \end{align*}
 
 So, if $f \in \sD(A^Y)$, then $f \circ \psi$ fulfills the boundary condition for $X$, that is
  \begin{align*}
    0 & = 
      c^v_1 f \big( \psi (v) \big)- \sum_{l \in \cL(v)} c^{v,l}_2 f_l' \big( \psi (v) \big) + \frac{c_3}{2} f'' \big( \psi (v) \big) \\
     & \qquad - \int_{\cG \bs \{v\}} \big( f \big( \psi (g) \big) - f \big( \psi (v) \big) \big) \, c^v_4(dg) \\
     & = c^v_1 f(v) - \sum_{l \in \cL(v)} c^{v,l}_2 f_l'(v) + \frac{c_3}{2} f''(v) \\
     & \qquad - \int_{\cG \bs (F \cup \{v\})} \big( f(g) - f(v) \big) \, c^v_4(dg) + f(v) \, c^v_4(F)
  \end{align*}
 for all $v \in \cV \bs F$, where we used $f \big( \psi (g) \big) = f(\D) = 0$ for all $g \in F$.
\end{proof}

%Remark: In case, $\sD(A^X) = \{ \cdots \}$, replacing ``implies that'' by ``if and only if'' in above proof shows $\sD(A^Y) = \{ \cdots \}$.

In general, this proof does not provide us with the \FW\ data of the killed process,
as we are only able to directly compare the \FW\ data with the boundary data of the generator in the star graph case (cf.~\cite[Lemma~4.1]{WernerMetricA}).
Therefore, we need to derive it manually by checking its definitions given in Feller's theorem~\cite[Theorem~1.2]{WernerMetricA}:

\begin{lemma} \label{lem:G_GC:killing on absorbing set, Feller data}
 Let $X$ be a Brownian motion on $\cG$ with \FW\ data  
   \begin{align*}
    \big( c^{v,\D}_1, c^{v,\infty}_1, (c^{v,l}_2)_{l \in \cL(v)}, c^v_3, c^v_4 \big)_{v \in \cV},
   \end{align*}
 and $F \subsetneq \cG$ be an isolated, absorbing set for~$X$.
 Let $Y := \psi(X)$ be the process on~$\cG \bs F$ resulting from killing~$X$ on~$F$, 
 with~$\psi$ as given in equation~\eqref{eq:G_GC:killing mapping}.
 If~$\cG \bs F$ is a metric graph and
 $Y$ is a Brownian motion on~$\cG \bs F$, then the \FW\ data of~$Y$ reads
   \begin{align*} 
    \big( c^{v,\D}_1 + c^v_4(F), c^{v,\infty}_1, (c^{v,l}_2)_{l \in \cL(v)}, c^v_3, c^v_4( \,\cdot\, \cap F^\comp) \big)_{v \in \cV \bs F}.
   \end{align*}
\end{lemma}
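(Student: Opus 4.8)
The plan is to verify the four components of the \FW\ data for $Y$ directly against their defining formulas in Feller's theorem \cite[Theorem~1.2]{WernerMetricA}, using the already-computed generator data from Lemma~\ref{lem:G_GC:killing on absorbing set, generator data} together with the compatibility between generator data and \FW\ data that holds on star graphs \cite[Lemma~4.1]{WernerMetricA}. Since $F$ is isolated and absorbing, the local structure of $Y$ near any vertex $v \in \cV \bs F$ coincides with that of $X$ near $v$ except that the part of the jump measure $c^v_4$ charging $F$ has been redirected to the cemetery. So the strategy is: first show that the killing (absorption) behavior, stickiness, and reflection parameters are read off from the same local quantities for $Y$ as for $X$ (hence unchanged), and then show that the jump measure of $Y$ is exactly the restriction $c^v_4(\,\cdot\, \cap F^\comp)$, with the ``lost mass'' $c^v_4(F)$ reappearing as additional killing weight $c^{v,\D}_1 + c^v_4(F)$.

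First I would fix $v \in \cV \bs F$ and recall how the \FW\ data at $v$ is extracted from the process: the parameters $c^v_3$ (stickiness) and $(c^{v,l}_2)_{l \in \cL(v)}$ (reflection along each emanating edge) are determined by the local behavior of the process in an arbitrarily small neighborhood of $v$ that avoids $F$ (possible since $F$ is isolated). Because $Y = \psi(X)$ agrees with $X$ path-by-path until the first hit of $F$, and a small enough neighborhood of $v$ meets $F$ only at $F$ itself (isolatedness), the excursion/sojourn behavior of $Y$ at $v$ within that neighborhood is identical to that of $X$. This gives $c^{v,l,Y}_2 = c^{v,l}_2$ and $c^{v,Y}_3 = c^v_3$. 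Similarly, the component $c^{v,\infty}_1$ — the killing-by-explosion / killing-at-$v$-without-jump part — is a purely local quantity unaffected by the fate of jumps landing in $F$, so $c^{v,\infty,Y}_1 = c^{v,\infty}_1$.

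Next I would handle the jump measure. By definition in Feller's theorem, the jump measure at $v$ for $Y$ is the law (up to normalization by the relevant local time rate) of the position where the process jumps away from $v$, restricted to jumps landing in $\cG \bs (F \cup \{v\})$. For $Y$, a jump of $X$ that would land in $F$ is instead killed the instant it lands (since $\psi$ sends $F$ to $\D$), so it contributes to neither the jump measure of $Y$ nor to any surviving part of the path — it is simply absorbed. Hence the jump measure of $Y$ at $v$ is $c^v_4$ with its $F$-part excised, i.e.\ $c^v_4(\,\cdot\, \cap F^\comp)$, which is a genuine \FW\ jump measure on $(\cG \bs F) \bs \{v\}$ (finiteness of $\int (1 - e^{-d(v,g)}) \, c^v_4(dg)$ over $F^\comp$ is inherited from the same integrability for $c^v_4$). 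The killing-with-jump component then picks up the redirected mass: the jump portion to $\D$ increases by exactly $c^v_4(F)$, giving $c^{v,\D,Y}_1 = c^{v,\D}_1 + c^v_4(F)$. At this point one should cross-check consistency with Lemma~\ref{lem:G_GC:killing on absorbing set, generator data}: the total killing weight in the generator boundary condition there is $c^v_1 + c^v_4(F)$, and since $c^v_1 = c^{v,\D}_1 + c^{v,\infty}_1$ in the correspondence of \cite[Lemma~4.1]{WernerMetricA}, this matches $(c^{v,\D}_1 + c^v_4(F)) + c^{v,\infty}_1$, and the remaining terms (reflection, stickiness, restricted jump integral) agree verbatim — confirming the claimed \FW\ data is the unique one compatible with the generator.

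The main obstacle I expect is the rigorous extraction of the \FW\ data from the \emph{process} $Y$ rather than merely matching generator coefficients: the generator domain in Lemma~\ref{lem:G_GC:killing on absorbing set, generator data} only pins down the \emph{boundary conditions}, not the decomposition of the killing term into its $\D$-part and $\infty$-part, nor does it a priori guarantee $Y$ is even a Brownian motion on $\cG \bs F$ in the sense of \cite{WernerMetricA} — which is why the statement hypothesizes ``if $\cG \bs F$ is a metric graph and $Y$ is a Brownian motion on $\cG \bs F$.'' Under that hypothesis, the honest route is to invoke the characterization that a Brownian motion on a metric graph, restricted to a small star-graph neighborhood of a vertex (with $F$ removed), is again a Brownian motion on that star graph up to the first exit, apply \cite[Lemma~4.1]{WernerMetricA} there to read off the \FW\ data locally, and argue that the local \FW\ data so obtained for $Y$ coincides with the asserted formula by the path-identity-before-hitting-$F$ argument. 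The bookkeeping around how a jump into the isolated absorbing set $F$ is, from the point of view of the stopped/killed process, indistinguishable from an immediate jump to $\D$ is the conceptual crux; once that is granted, the remaining verification is routine.
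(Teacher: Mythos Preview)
Your intuition is correct, but the route through Lemma~\ref{lem:G_GC:killing on absorbing set, generator data} and \cite[Lemma~4.1]{WernerMetricA} is not how the paper proceeds, and it has a genuine gap. The \FW\ data at~$v$ is defined from the \emph{global} exit distributions $\PV_v(X_{\t_\e}\in\,\cdot\,)$: the jump measure~$c^v_4$ may charge points arbitrarily far from~$v$, so restricting to a star-graph neighborhood of~$v$ and invoking \cite[Lemma~4.1]{WernerMetricA} there cannot recover the full jump measure (nor, consequently, the full killing weight). Moreover, the generator boundary condition only sees the sum $c^{v,\D}_1 + c^{v,\infty}_1$, so no amount of matching against Lemma~\ref{lem:G_GC:killing on absorbing set, generator data} will separate the two; your hand-wavy assertion that $c^{v,\infty}_1$ is ``purely local'' is exactly what needs proof.

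The paper instead works directly with the defining quantities of \cite[Theorem~1.2]{WernerMetricA}. For $v\in\cV\setminus F$ and small~$\e$, the exit behaviors of $X$ and $Y$ from $\BB_\e(v)$ agree except that a jump of~$X$ into~$F$ becomes a jump of~$Y$ to~$\D$; hence $\EV_v(\t^Y_\e)=\EV_v(\t^X_\e)$, $\nu^{Y,v}_\e = \nu^{X,v}_\e(\,\cdot\,\cap F^\comp)$, and $\PV_v(Y_{\t^Y_\e}=\D)=\PV_v(X_{\t^X_\e}\in\{\D\}\cup F)$. The key computation you are missing is the normalization: because $F$ is isolated, $d(v,f)=+\infty$ for $f\in F$, so $\int_F(1-e^{-d(v,g)})\,\nu^{X,v}_\e(dg)=\nu^{X,v}_\e(F)$, and one checks $K^{Y,v}_\e = K^{X,v}_\e$ exactly. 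From this one reads off $\overbar{\mu}^{Y,v}=\overbar{\mu}^{X,v}(\,\cdot\,\cap F^\comp)$ and then each component of the \FW\ data directly from its definition, including the split $c^{Y,v,\D}_1 = c^{X,v,\D}_1 + c^{X,v}_4(F)$ and $c^{Y,v,\infty}_1 = c^{X,v,\infty}_1$. This direct computation is short and avoids the detour entirely; the paper in fact remarks just before the lemma that the generator-data route does not suffice here.
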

\begin{proof}
 We are using the notations of~\cite[Theorem~1.2]{WernerMetricA}, and indicate the corresponding process in the superscript of the variables.
 Fix $v \in \cV \bs F$. 
 The processes' exit behaviors totally coincide, except if~$X$ exits from a small neighborhood of~$v$ by jumping into~$F$ (then $Y$ jumps to~$\D$).
 Thus, $\EV_v(\t^X_\e) = \EV_v(\t^Y_\e)$ holds for all sufficiently small $\e > 0$, and the exit distributions read
  \begin{align*}
   \PV_v \big( Y_{\t^Y_\e} \in dg \cap (\cG \bs F) \big) 
   & = \PV_v \big( X_{\t^X_\e} \in dg \cap (\cG \bs F) \big) , \\
   \PV_v \big( Y_{\t^Y_\e} = \D \big)  
   & = \PV_v\big ( X_{\t^X_\e} \in \{\D\} \cup F \big).
  \end{align*}
 Therefore, we have $\nu^{Y, v}_\e = \nu^{X, v}_\e \big(\,\cdot\, \cap (\cG \bs F) \big)$ and, as $d(v,f) = +\infty$ for all $f \in F$, 
  \begin{align*}
   \int_F \big( 1 - e^{-d(v,g)} \big) \, \nu^{X,v}_\e(dg) = \nu^{X,v}_\e(F) = \frac{\PV_v (X_{\t^X_\e} \in F)}{\EV_v(\t^X_\e)}.
  \end{align*}
 It follows that
  \begin{align*}
   K^{Y, v}_\e
   & = 1 + \frac{\PV_v (Y_{\t^Y_\e} = \D)}{\EV_v(\t^Y_\e)} + \int_{\cG \bs \{v\}} \big( 1 - e^{-d(v,g)} \big) \, \nu^{Y,v}_\e(dg) \\
   & = 1 + \frac{\PV_v (X_{\t^X_\e} = \D)}{\EV_v(\t^X_\e)} + \int_{(\cG \bs \{v\}) \cup F} \big( 1 - e^{-d(v,g)} \big) \, \nu^{X,v}_\e(dg) \\   
   & = K^{X,v}_\e.
  \end{align*}
 As $F$ is isolated, we get $\overbar{\mu}^{Y,v} = \overbar{\mu}^{X,v} \big( \,\cdot\, \cap \big( \overbar{\cG \bs \{v\}} \bs F \big) \big)$, and conclude that
  \begin{align*}
   c_1^{Y,v,\D}
   & = \lim_{\e \downarrow 0} \Big(  \frac{\PV_v (X_{\t^X_\e} = \D)}{\EV_v(\t^X_\e) \, K^{X,v}_\e}
           + \frac{\PV_v (X_{\t^X_\e} \in F)}{\EV_v(\t^X_\e) \, K^{X,v}_\e} \Big) \\
   & = c_1^{X,v,\D} + \overbar{\mu}^{X,v}(F) \\
   & = c_1^{X,v,\D} + c_4^{X,v}(F),
  \end{align*}   
  as well as
   $c_1^{Y,v,\infty} = c_1^{X,v,\infty}$,
   $c_2^{Y,v,l} = c_2^{X,v,l}$ for each $l \in \cL(v)$,
   $c_3^{Y,v} = c_3^{X,v}$,
   and $c_4^{Y,v}  = c_4^{X,v} \big(\,\cdot\, \cap (\cG \bs F) \big)$.
\end{proof}

\begin{remark} \label{rem:G_GC:on killing on absorbing set, Feller data}
 We will apply Lemma~\ref{lem:G_GC:killing on absorbing set, Feller data} in the following context:
 Let $X$ be a Brownian motion on $\cG$ and $F$ be an isolated and absorbing set for $X$, such that for its first entry time 
 $\h_F := \inf \{ t \geq 0: X_t \in F \}$ and $\h_X$ as given in \cite[Definition~2.1]{WernerMetricA}, 
  \begin{align*}
   \h_X < \h_F  \quad \text{$\PV_g$-a.s.}
  \end{align*}
 holds true for all $g \in \comp F$.
 
 It then follows from Theorem~\ref{theo:C_MS:killing on absorbing set} that the killed process $Y = \psi(X)$ is a right process, 
 and therefore strongly Markovian.
 If $\cG \bs F$ is a metric graph, then, as $H_Y = H_X$ and $Y_t = X_t$ for all $t \leq \h_X < \h_F$,
 the properties of \cite[Theorem~2.5]{WernerMetricA} follow for $Y$ from the respective ones of $X$.
 Thus, $Y$ is a Brownian motion on $\cG \bs F$, and Lemma~\ref{lem:G_GC:killing on absorbing set, Feller data} can be applied
 in order to deduce the \FW\ data of $Y$.
 
 In particular, the condition above is satisfied if $F$ can only be reached from $\comp F$ via jumps from vertices, which, as $F$ is isolated and thus
 has positive distance from any vertex $v \in \cV \bs F$, cannot happen immediately due to the normality of the process.
\end{remark}

\section{Introduction of Non-Local Jumps} \label{sec:G_GC:intoduction non-local jumps}

We will introduce the ``global'' jumps, namely jumps to other subgraphs, with the help of the technique of instant revivals
as established in~\cite[Theorem~1.7]{WernerConcat}. 
In order to prepare this approach, we examine the effect of this method on the \FW\ data. 
Similar results were already attained in the examinations concerning Brownian motions on star graphs (see~\cite[Lemma~4.2, Lemma~4.3]{WernerMetricA}).
The next lemma shows that, as expected, the killing weight will be transformed to an additional jump portion with distribution given by the revival kernel.
It also clarifies that this technique can only be used for the implementation of finite jump measures.

\begin{lemma} \label{lem:G_GC:reviving BB with revival kernel}
 Let $X$ be a Brownian motion on $\cG$ with \FW\ data
   \begin{align*}
    \big( c^{v,\D}_1, c^{v,\infty}_1, (c^{v,l}_2)_{l \in \cL(v)}, c^v_3, c^v_4 \big)_{v \in \cV},
   \end{align*}
 lifetime $\z^X$, and exit times $\t^X_\e := \inf \big\{ t \geq 0: d(X_t, X_0) > \e \big\}$ for $\e > 0$.
 If $c^{v,\D}_1 > 0$,
 consider the instant revival process $Y$, constructed from $X$ with the revival kernel
  \begin{align*}
    k(v, \,\cdot\,) = \kappa^v, \quad v \in \cV,
  \end{align*} 
 for some probability measure $\kappa^v$ on $\cG$, and $k(g, \,\cdot\,) = \e_g$ for all $g \notin \cV$. 
 Suppose that for every $v \in \cV$ there exists $\d > 0$ such that
  \begin{enumerate}
   \item $\kappa^v \big( \BB_\d(v) \big) = 0$, and \label{itm:G_GC:reviving BB with revival kernel, i}
   \item for all $\e < \d$, $X_{\t^X_\e} \in \BB_\d(v)$ holds $\PV^X_v$-a.s.\ on $\{ \t^X_\e < \z^X \}$. \label{itm:G_GC:reviving BB with revival kernel, ii}
 \end{enumerate}
 Then $Y$ is a Brownian motion on $\cG$. For all $v \in \cV$, the generator $A^Y$ of $Y$ satisfies for every $f \in \sD(A^Y)$
   \begin{align*}
   c^{v, \infty}_1 f(v) 
   - \sum_{l \in \cL(v)} c^{v,l}_2 f_l'(v) + c^v_3 \, A f(v)
   - \int_{\cG \bs \{v\}} \hspace*{-3pt} \big( f(g) - f(v) \big) \, (c^v_4 + c^{v,\D}_1 \, \kappa^v) (dg)
   = 0.
  \end{align*} 
 If additionally $d(v,x) = +\infty$ holds for every $x \in \supp{\kappa^v}$, then the \FW\ data of $Y$ at $v$ reads
   \begin{align*}
    \big( 0, c^{v,\infty}_1, (c^{v,l}_2)_{l \in \cL(v)}, c^v_3, c^v_4 + c^{v,\D}_1 \, \kappa^v \big).
   \end{align*}
\end{lemma}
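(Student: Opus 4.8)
The plan is to establish that $Y$ is a Brownian motion on $\cG$, then read off the generator boundary condition from the instant-revival construction of \cite[Theorem~1.7]{WernerConcat}, and finally compute the \FW\ data by the exit-behaviour argument used in the proof of Lemma~\ref{lem:G_GC:killing on absorbing set, Feller data}. First I would note that a Brownian motion on a metric graph has no killing inside the edges, hence is sent to the cemetery only from vertices, so every death of $X$ satisfies $X_{\z^X-} \in \cV$ and the revival restarts $Y$ from a $\kappa^{X_{\z^X-}}$-distributed point. Conditions~(i) and~(ii) are tailored to the hypotheses of \cite[Theorem~1.7]{WernerConcat}: by~(ii) the process started at $v$ leaves a small ball $\BB_\e(v)$ only continuously or by a jump staying in $\BB_\d(v)$, unless it first dies at $v$, whereupon~(i) forces the revival into $\comp\BB_\d(v)$; thus the revival times cannot accumulate at a vertex and $Y$ is a right process. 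Since $Y$ coincides with $X$ up to its first revival, and each later life is an independent copy of $X$ issued from a point of $\comp\BB_\d(v)$ (the interior of an edge, or another vertex), the properties of \cite[Theorem~2.5]{WernerMetricA} pass from $X$ to $Y$ exactly as in Remark~\ref{rem:G_GC:on killing on absorbing set, Feller data}, so $Y$ is a Brownian motion on $\cG$.

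For the generator boundary condition I would use that, by \cite[Theorem~1.7]{WernerConcat}, the instant revival replaces at each vertex $v$ the killing portion of weight $c^{v,\D}_1$ (the jump of $X$ to $\D$, at which $f(\D) = 0$) by a jump with law $\kappa^v$, leaving the remaining boundary behaviour of $X$ unchanged. Combined with the boundary condition satisfied by the functions in $\sD(A^X)$ (the generator counterpart of the \FW\ data of $X$, with $f(v)$-coefficient $c^{v,\D}_1 + c^{v,\infty}_1$), this gives for every $f \in \sD(A^Y)$
\begin{align*}
 & c^{v,\infty}_1 f(v) + c^{v,\D}_1 \Big( f(v) - \int_{\cG \bs \{v\}} f(g) \, \kappa^v(dg) \Big) \\
 & \qquad {} - \sum_{l \in \cL(v)} c^{v,l}_2 f_l'(v) + \frac{c^v_3}{2} f''(v) - \int_{\cG \bs \{v\}} \big( f(g) - f(v) \big) \, c^v_4(dg) = 0 .
\end{align*}
Since $\kappa^v$ is a probability measure with $\kappa^v(\{v\}) = 0$ by~(i), the term $c^{v,\D}_1 f(v) - c^{v,\D}_1 \int_{\cG \bs \{v\}} f(g)\,\kappa^v(dg)$ equals $- c^{v,\D}_1 \int_{\cG \bs \{v\}} \big( f(g) - f(v) \big)\,\kappa^v(dg)$; substituting this, and writing $\frac{c^v_3}{2} f''(v) = c^v_3\,A f(v)$, yields the stated identity.

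Finally, assuming $d(v,x) = +\infty$ for every $x \in \supp\kappa^v$, I would fix $v \in \cV$ and mirror the proof of Lemma~\ref{lem:G_GC:killing on absorbing set, Feller data}: for $\e < \d$ the exit behaviours of $Y$ and $X$ from $\BB_\e(v)$ coincide, except that a $\D$-death of $X$ at $v$ before exiting becomes, for $Y$, an instantaneous revival at a $\kappa^v$-distributed point, which by~(i) lies in $\comp\BB_\d(v) \subseteq \comp\BB_\e(v)$ and is therefore the exit point of $Y$. Hence $\EV_v(\t^Y_\e) = \EV_v(\t^X_\e)$, $\PV_v(Y_{\t^Y_\e} = \D) = 0$, and $\nu^{Y,v}_\e = \nu^{X,v}_\e + \frac{\PV_v(X_{\t^X_\e} = \D)}{\EV_v(\t^X_\e)}\,\kappa^v$, the two summands being carried by the disjoint sets $\BB_\d(v)$ and $\comp\BB_\d(v)$. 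The hypothesis $d(v,x) = +\infty$ on $\supp\kappa^v$ gives $\int_{\cG \bs \{v\}} \big( 1 - e^{-d(v,g)} \big)\,\kappa^v(dg) = 1$, so the quantity $\PV_v(X_{\t^X_\e} = \D)/\EV_v(\t^X_\e)$ that fed the killing summand of $K^{X,v}_\e$ now feeds its jump summand with the same value; consequently $K^{Y,v}_\e = K^{X,v}_\e$. Passing to the limit $\e \downarrow 0$ with the definitions of \cite[Theorem~1.2]{WernerMetricA}, and using that $\PV_v(X_{\t^X_\e} = \D)/\big( \EV_v(\t^X_\e)\,K^{X,v}_\e \big) \to c^{X,v,\D}_1$ while $\kappa^v$ is fixed, one obtains $c^{Y,v,\D}_1 = 0$, $c^{Y,v,\infty}_1 = c^{X,v,\infty}_1$, $c^{Y,v,l}_2 = c^{X,v,l}_2$, $c^{Y,v}_3 = c^{X,v}_3$, $\overbar{\mu}^{Y,v} = \overbar{\mu}^{X,v} + c^{X,v,\D}_1\,\kappa^v$, and hence $c^{Y,v}_4 = c^{X,v}_4 + c^{X,v,\D}_1\,\kappa^v$, which is the claimed \FW\ data. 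I expect this last step to be the main obstacle: one must separate the killing component $c^{v,\D}_1$ (the genuine jump to $\D$) from $c^{v,\infty}_1$, check that the revival affects only the former, and verify that $K^v_\e$ is left invariant — which is exactly where $d(v,x) = +\infty$ on $\supp\kappa^v$ is indispensable, for otherwise moving weight from the killing to the jump slot of $K^v_\e$ would alter its value and the resulting data would violate Feller's normalisation; a secondary point is the non-accumulation of revival times in the first step, for which~(i) and~(ii) are tailored.
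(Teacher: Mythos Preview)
Your first and third paragraphs are essentially in line with the paper. The gap is in your second paragraph, where you derive the generator boundary condition: you assert that \cite[Theorem~1.7]{WernerConcat} tells you the instant revival ``replaces the killing portion by a jump with law $\kappa^v$'' at the level of the generator, but that theorem (as the paper uses it) only yields that $Y$ is a right process. What you have written is precisely the conclusion to be proved, not a derivation of it. In particular, you invoke ``the boundary condition satisfied by the functions in $\sD(A^X)$'' to obtain a statement about $f \in \sD(A^Y)$ without establishing any link between the two domains; there is no reason a priori why $f \in \sD(A^Y)$ should satisfy the $X$-boundary condition with the killing term replaced.

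The paper instead derives the boundary condition directly from Dynkin's formula for $Y$ at $v$ (after checking that $v$ is not a trap for $Y$, which uses $c^{v,\D}_1 > 0$ and assumption~(i)), by re-running the proof of Feller's theorem~\cite[Theorem~1.2]{WernerMetricA} for $Y$ but with the normalising factor $K^X_\e$ of $X$ in place of $K^Y_\e$. This substitution is legitimate because one first proves the inequality $K^X_\e \geq K^Y_\e$ (with equality precisely when $d(v,x) = +\infty$ on $\supp\kappa^v$). The exit measure $\nu^Y_\e$ is then decomposed into its $\BB_\d(v)$-part, which coincides with $\nu^X_\e$, and its $\comp\BB_\d(v)$-part, which equals $\kappa^v$ times $\PV_v(X_{\t^X_\e} = \D)/\EV_v(\t^X_\e)$; passing to the limit along the Feller sequence $(\e_n)$ for $X$ then gives the stated identity.

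You in fact carry out exactly this exit-distribution analysis in your third paragraph, but only under the extra hypothesis $d(v,x) = +\infty$, where $K^Y_\e = K^X_\e$. The point you are missing is that this same computation, together with the inequality $K^X_\e \geq K^Y_\e$, is what is needed for the general generator boundary condition; it cannot be shortcut by an appeal to \cite[Theorem~1.7]{WernerConcat}. So the fix is to move your exit-behaviour argument from the third paragraph to the second, prove and use $K^X_\e \geq K^Y_\e$ there, and reserve the equality $K^X_\e = K^Y_\e$ for the final identification of the \FW\ data.
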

\begin{proof}
 By~\cite[Theorem~1.7]{WernerConcat}, $Y$ is a right process and thus strongly Markovian.
 As~$Y_t = X_t$ holds a.s.\ for all $t \leq H_X = H_Y$,
% definition \eqref{def:G_BM:BM} or its equivalent characterization \eqref{theo:G_BM:resolvent characerization of BM on MG} imply that
 $Y$ is a Brownian motion on $\cG$.

 Fix $v \in \cV$. We are going to examine the components evolving in the generator of the process~$Y$ and compare 
 them to the respective ones of~$X$.  The components in Feller's theorem~\cite[Theorem~1.2]{WernerMetricA} for the process~$X$ at the vertex~$v$ will be named
 $c^X_1$, $\nu_\e^{X}$, $K^X_\e$, etc., instead of $c^v_1$, $\nu_\e^{v}$, $K^v_\e$.
 The proof will be based on the following two main principles:
 \begin{itemize}
  \item Due to assumption \ref{itm:G_GC:reviving BB with revival kernel, i},
        the processes $Y$ and $X$ are equivalent in a neighborhood of~$v$, more precisely: There exists $\d > 0$
        (e.g.\ being the minimum of $\d$ in assumption~\ref{itm:G_GC:reviving BB with revival kernel, i}
        and the minimal length of all edges incident with $v$) such that
         \begin{align*}
          \forall \e \leq \d: \quad \EV^Y_v ( \t^Y_\e ) = \EV^X_v ( \t^X_\e ),
         \end{align*}
        and for all $n \in \N$, $f_1, \ldots, f_n \in b\sB(\cG)$, $0 \leq t_1 < \ldots < t_n$,
         \begin{align*}
          \hspace*{3em} \PV^Y_v \big( f_1(Y_{t_1}) \, \cdots \, f_1(Y_{t_n}) ; t_n < \t^Y_\d \big) = \PV^X_v \big( f_1(X_{t_1}) \, \cdots \, f_1(X_{t_n}) ; t_n < \t^X_\d \big).
         \end{align*}
        In particular, we have for all $\e < \d$, $A \in \sB(\cG)$:
         \begin{align*}
          \PV^Y_v \big( Y_{\t^Y_\e} \in A \,|\, Y_{\t^Y_\e} \in \BB_\d(v) \big) = \PV^X_v \big( X_{\t^X_\e} \in A \,|\, X_{\t^X_\e} \in \BB_\d(v) \big).
         \end{align*}
  \item Due to assumption \ref{itm:G_GC:reviving BB with revival kernel, ii}, the process $X$ only has jumps from $v$ into $\BB_\d(v)$ or to~$\D$, 
        that is,
         \begin{align*}
          \forall \e < \d: \quad \PV^X_v \big( X_{\t^X_\e} \in \BB_\d(v) \cup \{ \D \} \big) = 1.
         \end{align*}       
        Therefore, $Y$ only can jump into $\comp \BB_\d(v)$ if the underlying process $X$ is killed and
        revived again, which yields
          \begin{align*}
            \PV^Y_v \big( Y_{\t^Y_\e} \in \comp \BB_\d(v) \big) = \PV^X_v \big( X_{\t^X_\e} = \D \big),
          \end{align*}
        and the jump distribution is given by the reviving kernel
           \begin{align*}
            \PV^Y_v \big( Y_{\t^Y_\e} \in A \,|\, Y_{\t^Y_\e} \in \comp \BB_\d(v) \big) =  \kappa^v(A), \quad A \in \sB(\cG).
          \end{align*}         
        Furthermore, the revived process $Y$ is not able to die at all, yielding
           \begin{align*}
            \PV^Y_v \big( Y_{\t^Y_\e} = \D \big) = 0.
          \end{align*}        
 \end{itemize}
 Let $f \in \sD(A^Y)$ and fix $v \in \cV$. The vertex $v$ cannot be a trap for $Y$, as otherwise~$v$ would either be a trap for~$X$, which is impossible by $c^{v,\D}_1 > 0$,
 or $Y$ would be revived at~$v$ when $X$ dies there, which contradicts assumption \ref{itm:G_GC:reviving BB with revival kernel, i}.
 Thus, Dynkin's formula yields% \eqref{theo:A_SM:Dynkins formula (generator)} yields
  \begin{align*}
   Af(v)
   & = \lim_{\e \downarrow 0} \frac{\EV^Y_v \big( f(Y_{\t^Y_\e}) \big) - f(v)}{\EV^Y_v(\t^Y_\e)}.
  \end{align*}
 
 We are going to reiterate the steps in the proof of Feller's theorem~\cite[Theorem~1.2]{WernerMetricA} for the process~$Y$, but we will be using
 the normalization factor~$K^X_{\e}$ of~$X$ instead of~$K^Y_{\e}$. This will not pose any problems because $K^X_{\e} \geq K^Y_\e$ holds true,
 which is seen as follows:
 With the scaled exit distributions from $\comp \overline{\BB_\e(v)}$
  \begin{align*}
   \nu^Y_\e(A)
    = \frac{\PV^Y_v \big( Y_{\t^Y_\e} \in A \big)}{\EV^Y_v(\t^Y_\e)}, \quad
   \nu^X_\e(A)
    = \frac{\PV^X_v \big( X_{\t^X_\e} \in A \big)}{\EV^X_v(\t^X_\e)}, \quad A \in \sB \big(\cG \bs \{v\} \big),
  \end{align*}
  for $Y$ and $X$, assumption~\ref{itm:G_GC:reviving BB with revival kernel, i} asserts that for all sufficiently small $\e > 0$,
  \begin{align*}
    K^X_{\e} & = 1 + \frac{\PV^X_v \big( X_{\t^X_\e} = \D \big)}{\EV^X_v(\t^X_\e)} + \int_{\cG \bs \{v\}} \big( 1 - e^{-d(v,g)} \big) \, \nu^X_{\e} (dg) \\
             & = 1 + \frac{\PV^Y_v \big( Y_{\t^Y_\e} \in \comp \BB_\d(v) \big)}{\EV^Y_v(\t^Y_\e)} + \int_{\BB_\d(v) \bs \{v\}} \big( 1 - e^{-d(v,g)} \big) \, \nu^Y_{\e} (dg).
  \end{align*}
 As $\PV^Y_v \big( Y_{\t^Y_\e} = \D \big) = 0$ and 
  \begin{align} \label{eq:G_GC:reviving BB with revival kernel, proof, K_X vs K_Y}
   \frac{\PV^Y_v \big( Y_{\t^Y_\e} \in \comp \BB_\d(v) \big)}{\EV^Y_v(\t^Y_\e)}
   = \int_{ \comp \BB_\d(v) } 1  \, \nu^Y_{\e} (dg)
   \geq \int_{ \comp \BB_\d(v) } \big( 1 - e^{-d(v,g)} \big)  \, \nu^Y_{\e} (dg),
  \end{align}
 we get 
  \begin{align*}
     K^X_{\e}  & \geq 1 + \frac{\PV^Y_v \big( Y_{\t^Y_\e} = \D \big)}{\EV^Y_v(\t^Y_\e) } + \int_{\cG \bs \{v\}} \big( 1 - e^{-d(v,g)} \big) \, \nu^Y_{\e} (dg) \\
             & = K^Y_{\e}.
  \end{align*}
  
 Thus, by following the proof of Feller's theorem (see \cite[Section~3]{WernerMetricA}), we get
  \begin{align*}
   \lim_{\e \downarrow 0} \Big(  f(v) \, \frac{\PV^Y_v \big( Y_{\t^Y_\e} = \D \big)}{\EV^Y_v(\t^Y_\e) \, K^X_{\e}} 
                                 + A f(v) \, \frac{1}{K^X_\e} 
                                 - \int_{\cG \bs \{v\}} \big( f(g) - f(v) \big) \, \frac{\nu^{Y}_\e (dg)}{K^X_\e} \Big)
   = 0.
  \end{align*}  
 However, it is $\PV^Y_v ( Y_{\t^Y_\e} = \D ) = 0$, and the exit distributions of $Y$ decompose into
  \begin{align*}
    \nu^Y_\e(A) = \frac{\PV^Y_v \big(Y_{\t^Y_\e} \in A \cap \BB_\d(v) \big)}{\EV^Y_v(\t^Y_\e)} + \frac{\PV^Y_v \big( Y_{\t^Y_\e} \in A \cap \BB_\d(v)^\comp \big)}{\EV^Y_v(\t^Y_\e)},
  \end{align*}
 with
  \begin{align*}
   \frac{\PV^Y_v \big( Y_{\t^Y_\e} \in A \cap \BB_\d(v) \big)}{\EV^Y_v(\t^Y_\e)}
   & = \frac{\PV^Y_v \big( Y_{\t^Y_\e} \in A \,|\, Y_{\t^Y_\e} \in \BB_\d(v) \big)}{\PV^Y_v\big( Y_{\t^Y_\e} \in \BB_\d(v) \big)} \, \frac{1}{\EV^Y_v(\t^Y_\e)} \\
   & = \frac{\PV^X_v \big( X_{\t^X_\e} \in A \,|\, X_{\t^X_\e} \in \BB_\d(v) \big)}{\PV^X_v\big( Y_{\t^X_\e} \in \BB_\d(v) \big)} \, \frac{1}{\EV^X_v(\t^X_\e)} \\
   & = \frac{\PV^X_v \big( X_{\t^X_\e} \in A \cap \BB_\d(v) \big)}{\EV^X_v(\t^X_\e)} \\
   & = \nu^X_\e(A),
  \end{align*}
 and
  \begin{align*}
   \frac{\PV^Y_v \big( Y_{\t^Y_\e} \in A \cap \BB_\d(v)^\comp \big)}{\EV^Y_v(\t^Y_\e)}
   & = \PV^Y_v \big( Y_{\t^Y_\e} \in A \,|\, Y_{\t^Y_\e} \in \BB_\d(v)^\comp \big) \, \frac{\PV^Y_v \big( Y_{\t^Y_\e} \in \BB_\d(v)^\comp \big) }{\EV^X_v(\t^X_\e)} \\
   & = \kappa^v(A) \, \frac{ \PV^X_v( X_{\t^X_\e} = \D) }{\EV^X_v(\t^X_\e)}.
  \end{align*}
 Therefore, we have
   \begin{align*}
   \lim_{\e \downarrow 0} \Big( &  A f(v) \, \frac{1}{K^X_\e} 
                                 - \int_{\cG \bs \{v\}} \big( f(g) - f(v) \big) \, \frac{\nu^{X}_\e (dg)}{K^X_\e} \\
                               & - \frac{ \PV^X_v( X_{\t^X_\e} = \D) }{\EV^X_v(\t^X_\e) \, K^X_{\e}} \,
                                   \int_{\comp \BB_\d(v)} \big( f(g) - f(v) \big) \, \kappa^v(dg) \Big)
   = 0,
  \end{align*}  
 and knowing that $\frac{1}{K^X_{\e_n}}$, $\frac{\nu^{X}_{\e_n} (dg)}{K^X_{\e_n}}$, $\frac{ \PV^X_v( X_{\t^X_{\e_n}} = \D) }{\EV^X_v(\t^X_{\e_n}) \, K^X_{\e_n}}$
 converge along the same sequence $(\e_n, n \in \N)$ given by Feller's theorem~\cite[Theorem~1.2]{WernerMetricA} for $X$, we conclude that 
  \begin{align*}
   c^{v, \infty}_1 \, f(v) - \sum_{l \in \cL(v)} c^{v,l}_2 f_l'(v) + c^v_3 \, A f(v)
   & - \int_{\cG \bs \{v\}} \big( f(g) - f(v) \big) \, c^v_4 (dg) \\
   & - c^{v, \D}_1 \,
       \int_{\comp \BB_\d(v)} \big( f(g) - f(v) \big) \, \kappa^v(dg)
   = 0.
  \end{align*}  
  
 In case every point in the support of $\kappa^v$ has distance $+\infty$ from $v$,
 equation~\eqref{eq:G_GC:reviving BB with revival kernel, proof, K_X vs K_Y} shows that $K^X_\e = K^Y_\e$ holds true,
 and therefore the above set of boundary conditions at $v$ for $Y$ coincides with the \FW\ data of~$Y$ at $v$. 
\end{proof}

The reader may notice that the resulting boundary data for~$Y$ given in Lemma~\ref{lem:G_GC:reviving BB with revival kernel} 
might not satisfy the normalization condition of the \FW\ data, as given in~\cite[Theorem~1.2]{WernerMetricA},
in case the support of~$\kappa^v$ does not have infinite distance from~$v$. 

\begin{remark} \label{rem:G_GC:reviving BB with revival kernel}
 We observe in Lemma~\ref{lem:G_GC:reviving BB with revival kernel} that the revival of a process upon its death with a revival distribution $\kappa$ only transforms 
 the ``real'' killing parameter $c^{\D}_1$ into an additional jump part $c^{\D}_1 \, \kappa$,
 while leaving the artificial killing portion $c^{\infty}_1$ intact.
 The main explanation is that $c^{\infty}_1$ does not represent the effect of ``killing'' in the sense of proper jumps to the cemetery point $\D$.
 It is rather caused by an explosion of the process, triggered by ever-growing jumps when the process approaches a vertex point,
 and this effect is not transformed by the revival technique.
 
 In the Brownian context, we do not expect any effects which would contribute to $c^{\infty}_1$, and we indeed showed 
 in~\cite[Theorem~1.4]{WernerMetricA} that $c^{\infty}_1$ vanishes for all Brownian motions on star graphs.
 %which were  constructed in section~\ref{sec:G_IM}.
 As these processes will form the building blocks of the Brownian motions on the general metric graph,
 the \FW\ data of all processes considered here will satisfy 
  \begin{align*}
    \forall v \in \cV: c^{v,\infty}_1 & = 0.
  \end{align*}
\end{remark}

\section{Gluing the Graphs Together} \label{sec:G_GC:glueing}

We are going to discuss the main construction method, namely the pasting of the subgraphs and their Brownian motions thereon.
As already disclosed in section~\ref{sec:G_GC:agenda},
this technique will compromise several steps:

%\subsection{Decomposition of the Graph $\cG$ into $\ttcG^{-1}$, $\ttcG^{+1}$}
\subsection{Decomposition of the Graph \texorpdfstring{$\cG$}{G} into \texorpdfstring{$\ttcG^{-1}$}{G''{-1}}, \texorpdfstring{$\ttcG^{+1}$}{G''{+1}}}

Let $\cG = \big( \cV, \cE, \cI, \cLV, \cR \big)$ be a metric graph.
%For every $v \in \cV$, let $\d^v > 0$ with $\d^v < \inf_{l \in \cL(v)} \cR_l$. 
%For convenience, we choose all $\d^v =: \d > 0$ to be equal, although this won't really be necessary.
We partition $\cG$ into two graphs by choosing two disjoint, non-empty sets $\cV^{-1}$ and $\cV^{+1}$ with $\cV = \cV^{-1} \uplus \cV^{+1}$,
and decompose the set of edges into
 \begin{align*}
  \cE = \, & \cE^{-1} \uplus \cE^{+1}, \quad \hspace{2.07em} \text{with } \cE^j := \{ e \in \cE: \cLV(e) \in \cV^j \},  \\
  \cI = \, & \cI^{-1} \uplus \cI^{+1} \uplus \cI_s,  \quad \text{with } \cI^j := \{ i \in \cI:  \cLV_{-}(i) \in \cV^j, \cLV_{+}(i) \in \cV^j \},  \\
  \cI_s := \, & \cI_s^{-1} \uplus \cI_s^{+1},  \quad \hspace{2.1em} \text{with } \cI_s^j := \{ i \in \cI: \cLV_{-}(i) \in \cV^j, \cLV_{+}(i) \notin \cV^j \}. 
 \end{align*}
 
As most of the following construction 
will be performed for both partial graphs in parallel, we will always assume that $j \in \{-1, +1\}$ when nothing else is said.
 
We define the metric graphs $\ttcG^{-1}$, $\ttcG^{+1}$ by
 \begin{align*}
  \ttcG^j := \big( \cV^j, \cE^j \cup \cE_s^j, \cI^j, \cLV^j, \cR^j \big),
 \end{align*}
equipped with additional external ``shadow'' edges
 \begin{align*}
  \cE_s^j := \{ e_i^j, i \in \cI_s \}, \quad \text{with } \forall i \in \cI_s: \quad e_i^j \notin \cE \cup \cE_s^{-j} \cup \cI, 
 \end{align*}
where the combinatorial structure and edge lengths of the original graph are naturally transfered to $\ttcG^{-1}$, $\ttcG^{+1}$ by setting
 \begin{align*}
  & \restr{\cLV^j}{\cE^j \cup (I^j \times I^j)} := \restr{\cLV}{\cE^j \cup (I^j \times I^j)}, 
  \quad \cLV^j(e_i^j)
   := \begin{cases}
      \cLV_{-}(i), & i \in \cI_s^j, \\
      \cLV_{+}(i), & i \in \cI_s^{-j},
     \end{cases}  \\
  & \restr{\cR^j}{\cE^j \cup I^j} := \restr{\cR}{\cE^j \cup I^j},
  \quad \restr{\cR^j}{\cE_s^j} := +\infty. 
 \end{align*}
For later use, we also define the ``shadow length'' of an external ``shadow'' edge by 
 \begin{align*}
  \cR_s(e_i^j) := \cR(i), \quad e_i^j \in \cE_s^{-1} \cup \cE_s^{+1}.
 \end{align*}

The excrescent parts of the shadow edges, which will be removed in the following development before gluing both subgraphs together,
are named 
\begin{align*}
 \tcG_s^j := \bigcup_{e \in \cE_s^j} \big( \{e\} \times [\cR_s(e), +\infty) \big). 
\end{align*}

%\subsection{Introducing the Brownian Motion $\ttX^{j}$ on $\ttcG^{j}$}
\subsection{Introducing the Brownian Motion \texorpdfstring{$\ttX^{j}$}{X''j} on \texorpdfstring{$\ttcG^{j}$}{G''j}}

Let $\ttX^{-1}$, $\ttX^{+1}$ be Brownian motions on $\ttcG^{-1}$, $\ttcG^{+1}$ respectively, 
which admit the hypotheses of right processes, feature infinite lifetimes, have the \FW\ data
 \begin{align*}
   \big( 0, 0, (p_2^{v,l})_{l \in \ttcL^j(v)}, p_3^v, p_4^v \big)_{v \in \cV^j}, 
 \end{align*}
are continuous inside every edge (cf.~\cite[Theorem~4.3]{WernerStar}), 
and satisfy for all $v \in \cV^j$
 \begin{align} \label{eq:G_GC:gluing, no jumps onto shadow parts}
   \forall \e < \d: \quad \PV_v^j \big( \ttX_{\ttt_\e^j}^j \in \tcG_s^j \big) = 0,
 \end{align}
with $\d := \min \{ \cR_i, i \in \cI_s \}$ and $\ttt_\e^j := \inf \big\{ t \geq 0: d( \ttX_t^j, \ttX_0^j) > \e \big\}$.

By gluing the graphs $\ttcG^{-1}$ and $\ttcG^{+1}$ (and thus the Brownian motions~$\ttX^{-1}$ and~$\ttX^{+1}$ thereon) together,
we are going to show the following main result of this section:

\begin{theorem} \label{theo:G_GC:glueing process}
  There exists a Brownian motion $X$ on $\cG$ with \FW\ data
   \begin{align*}
     \big( c^v_1, (c^{v,l}_2)_{l \in \cL(v)}, c^v_3, c^v_4 \big)_{v \in \cV},
   \end{align*}
  such that for each $v \in \cV$, it holds that $c^v_1 = 0$, $c^v_3 = p^v_3$, $c^v_4 = p^v_4 \circ (\psi^j)^{-1}$,
  with $\psi^j$ being defined by equation~\eqref{eq:G_GC:def psi}, and
  \begin{align*}
   i \in \cI(v): \quad &
   c^{v,i}_2
   = \begin{cases}
      p^{v,i}_2,     & i \in \cI^{-1}(v) \cup \cI^{+1}(v), \\
      p^{v,e^j_i}_2, & i \in \cI_s(v), \text{ with $j \in \{-1,+1\}$ such that $v \in \cV^j$,}
     \end{cases} \\
   e \in \cE(v): \quad &   
   c^{v,e}_2 
   = p^{v,e}_2.
  \end{align*}
\end{theorem}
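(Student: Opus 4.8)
The plan is to carry out, step by step, the gluing procedure sketched in Section~\ref{sec:G_GC:agenda}, specialised to the present situation in which the two subprocesses $\ttX^{-1},\ttX^{+1}$ already carry vanishing killing weight, so that none of the ``fake cemetery'' work is needed here. For each $j\in\{-1,+1\}$ I would first turn $\ttX^j$ into a process $\hX^j$ living on $\ttcG^j\bs\tcG_s^j$ by killing it on the excrescent shadow parts $\tcG_s^j$, transport it by the identification $\psi^j$ of~\eqref{eq:G_GC:def psi} to a process $Z^j$ on a subset $\cG^j\subseteq\cG$, read off the \FW\ data of $Z^j$ at the vertices $v\in\cV^j$, and finally paste $Z^{-1}$ and $Z^{+1}$ into a single process $X$ on $\cG$ by the alternating-copies concatenation of \cite[Section~3]{WernerConcat}, verifying that $X$ is a Brownian motion on $\cG$ with the asserted data.

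\emph{Step 1 (the subprocesses on $\cG^j$).} The set $\tcG_s^j$ is closed and every point of it lies at distance $\ge\d$ from every vertex of $\ttcG^j$; hence hypothesis~\eqref{eq:G_GC:gluing, no jumps onto shadow parts} together with the continuity of $\ttX^j$ inside edges shows that $\tcG_s^j$ can be entered from its complement only by continuous motion through the points $(\{e_i^j\},\cR_s(e_i^j))$. Killing $\ttX^j$ at the hitting time of $\tcG_s^j$ therefore yields a right process $\hX^j$ (an elementary killing at a terminal time, cf.~\cite{WernerConcat}) whose behaviour in a neighbourhood of each $v\in\cV^j$ is unchanged; moreover, since the jump measure $p_4^v$ is, by Feller's theorem \cite[Theorem~1.2]{WernerMetricA}, the scaled vague limit of the exit measures $\nu_\e^{j,v}$, and these charge $\tcG_s^j$ with zero mass for $\e<\d$ by~\eqref{eq:G_GC:gluing, no jumps onto shadow parts}, while the points at infinity carry no mass because $c_1^{v,\infty}=0$, the killing adds no killing weight at $v$. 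Thus $\hX^j$ has \FW\ data $\big(0,0,(p_2^{v,l})_{l\in\ttcL^j(v)},p_3^v,p_4^v\big)_{v\in\cV^j}$, now on $\ttcG^j\bs\tcG_s^j$. Since $\psi^j$ is, on each edge, an orientation-preserving or orientation-reversing isometry onto its image, identifying each shadow edge $e_i^j$ with the internal edge $i$, the transported process $Z^j:=\psi^j(\hX^j)$ is a right process on $\cG^j\subseteq\cG$ which inside every edge of $\cG$ behaves like standard one-dimensional Brownian motion, and which at each $v\in\cV^j$ satisfies locally the \FW\ boundary condition with exactly the data claimed in the theorem --- the reflection weight $p_2^{v,e_i^j}$ of the shadow edge being reassigned to the internal edge $i$, which gives the stated $c_2^{v,i}$ for $i\in\cI_s(v)$.

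\emph{Step 2 (pasting).} One has $\cG=\cG^{-1}\cup\cG^{+1}$, and $\cG^j$ is all of $\cG$ except for the ``holes'' --- the endpoints lying in $\cV^{-j}$ of the cut edges $i\in\cI_s$ --- each of which is a genuine vertex of $\cG^{-j}$ possessing a full $\cG$-neighbourhood inside $\cG^{-j}$. The process $Z^j$ has infinite lifetime apart from being killed precisely when it reaches such a hole, and by Step~1 it reaches a hole only by continuous motion along the corresponding cut edge. The alternating-copies concatenation of \cite[Section~3]{WernerConcat} then applies: whenever $Z^j$ dies at a hole $w$, revive $Z^{-j}$ started at $w$. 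The resulting process $X$ on $\cG$ is a right process with infinite lifetime; since both ingredients move continuously inside edges and all revivals occur at vertices, $X$ is continuous inside every edge and there behaves like standard Brownian motion, so by \cite[Theorem~2.5]{WernerMetricA} it is a Brownian motion on $\cG$. For the \FW\ data, fix $v\in\cV$ and let $j$ be such that $v\in\cV^j$; then $v$ has a full $\cG$-neighbourhood inside $\cG^j$, and whenever $X$ visits $v$ the governing copy is immediately reset to $Z^j$ at $v$, so the behaviour of $X$ near $v$ agrees with that of $Z^j$ near $v$, which by Step~1 realises $c_1^v=0$, $c_3^v=p_3^v$, $c_4^v=p_4^v\circ(\psi^j)^{-1}$ and the listed values of the $c_2^{v,\cdot}$. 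Here one also uses, as in Remark~\ref{rem:G_GC:reviving BB with revival kernel}, that all processes in play have $c_1^{v,\infty}=0$, so that the exhibited boundary conditions are in fact the \FW\ data.

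\textbf{Main obstacle.} The crux is the concatenation in Step~2: one must verify that the hypotheses of the alternating-copies construction of \cite{WernerConcat} are met by $Z^{-1}$ and $Z^{+1}$ (right processes, with matching killing and revival sets, and agreeing on the overlaps $\cG^{-1}\cap\cG^{+1}$, where both are standard Brownian motion) and --- crucially --- that the switches between the two copies do not accumulate in finite time, so that $X$ genuinely has infinite lifetime as asserted rather than being inadvertently killed. The \FW-data bookkeeping is comparatively routine, its one genuinely non-formal ingredient being the use of~\eqref{eq:G_GC:gluing, no jumps onto shadow parts} together with the exit-measure characterisation of the jump measure (in the spirit of Lemma~\ref{lem:G_GC:killing on absorbing set, Feller data} and Remark~\ref{rem:G_GC:on killing on absorbing set, Feller data}) to ensure that killing on the shadow tails leaks no mass into the killing parameter, so that $c_1^v=0$ is preserved.
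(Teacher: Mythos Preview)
Your proposal is correct and follows essentially the same route as the paper: kill each $\ttX^j$ on the shadow tails, transport by $\psi^j$, paste the two pieces via the alternating-copies construction of \cite{WernerConcat}, and read off the \FW\ data from the local exit behaviour near each vertex. Two refinements are worth noting. First, the paper deliberately avoids speaking of \FW\ data for the intermediate processes $\hX^j$ or $Z^j$, because these live on spaces ($\ttcG^j\bs\tcG_s^j$, respectively $\cG^j$) that are not metric graphs in the sense of \cite{WernerMetricA}, so the \FW\ machinery does not formally apply to them; instead it computes the \FW\ data directly for the glued process $X$ by tracing the exit measures $\nu^v_\e$, $K^v_\e$, $\overbar{\mu}^v$ back to those of $\ttX^j$ via $\psi^j$. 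Second, what you flag as the ``main obstacle'' is handled not by a separate non-accumulation argument but by verifying the two explicit compatibility conditions of \cite[Theorem~1.6]{WernerConcat} on the overlap $\cG^{-1}\cap\cG^{+1}$, which the paper reduces to first-passage identities for one-dimensional Brownian motion; once these hold, that theorem delivers the right process directly.
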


We construct this process $X$ explicitly via alternating copies of transformed processes $X^{-1}$, $X^{+1}$ of $\ttX^{-1}$, $\ttX^{+1}$.
Before that, we need to kill the original processes $\ttX^{-1}$ and $\ttX^{+1}$ on the excrescent shadow edges and reorientate the remaining parts in order to comply 
with the direction of the original internal edges of $\cG$.

%\subsection{Defining $\tX^{j}$ by Killing $\ttX^{j}$ on $\tcG_s^j$}
\subsection{Defining \texorpdfstring{$\tX^{j}$}{X'j} by Killing \texorpdfstring{$\ttX^{j}$}{X''j} on \texorpdfstring{$\tcG_s^j$}{G'j}}

Consider the first entry time into $\tcG_s^j$ of the prototype Brownian motion $\ttX^j$ on $\ttcG^j$,
 \begin{align*} 
  \ttt^j := \inf \big\{ t \geq 0: \ttX_t^j \in \tcG_s^j \big\}.
 \end{align*}
We define $\tX^j$ to be the process obtained by killing $\ttX^j$ at the terminal time $\ttt^j$,
 \begin{align*}
  \tX_t^j := \begin{cases}
                \ttX_t^j, & t < \ttt^j, \\
                \D,       & t \geq \ttt^j,
             \end{cases} 
 \end{align*}
on the topological subspace $\tcG^j$ of $\ttcG^j$ given by
 \begin{align*}
  \tcG^j 
  := \ttcG^j \bs \tcG_s^j
   = \cV^j & \cup \bigcup_{l \in \cE^j \cup \cI^j} \big( \{l\} \times (0, \cR_l) \big)
            \cup \bigcup_{e \in \cE_s^j} \big( \{e\} \times \big(0, \cR_s(e) \big) \big). 
 \end{align*}

\begin{lemma} \label{lemma:G_GC:tXj is right process}
 $\tX^j$ is a right process on $\tcG^j$ with lifetime $\ttt^j$.
\end{lemma}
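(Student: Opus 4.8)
The plan is to recognize $\tX^j$ as a standard killing of a right process at a terminal time, and then invoke the general machinery that such a killing preserves the right-process property. Concretely, $\ttX^j$ is by hypothesis a right process on $\ttcG^j$ with infinite lifetime, and $\ttt^j$ is the first entry time of $\ttX^j$ into the set $\tcG_s^j$. First I would observe that $\tcG_s^j = \bigcup_{e \in \cE_s^j}\big(\{e\}\times[\cR_s(e),+\infty)\big)$ is a closed subset of $\ttcG^j$, so $\ttt^j$ is a stopping time with respect to the (right-continuous, completed) filtration of $\ttX^j$; moreover, since $\ttX^j$ is a right process it is in particular strong Markov, and first hitting times of closed (indeed, nearly Borel) sets are terminal times in the sense that $\ttt^j = t + \ttt^j \circ \theta_t$ on $\{t < \ttt^j\}$. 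This is exactly the setting in which killing at $\ttt^j$ produces a right process on the (open) complement $\tcG^j = \ttcG^j \setminus \tcG_s^j$, with lifetime $\ttt^j$; the relevant statement is the standard fact about subprocesses killed at a terminal time (the same kind of result the authors invoke in the appendix for killing on an absorbing set, cf.\ the reference to Appendix~\ref{app:C_MS:killing on absorbing set} and Theorem~\ref{theo:C_MS:killing on absorbing set}).

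The key steps, in order, would be: (i) verify that $\tcG^j$ is a genuine topological subspace of $\ttcG^j$ — in fact an open subset, since $\tcG_s^j$ is closed in $\ttcG^j$ — so that the restricted process lives on a reasonable state space; (ii) confirm that $\ttt^j$ is a terminal time for $\ttX^j$: it is a stopping time by the \emph{d\'ebut} theorem applied to the closed (hence nearly Borel) set $\tcG_s^j$, and the additivity relation $\ttt^j = t + \ttt^j\circ\theta_t$ on $\{t<\ttt^j\}$ is immediate from the definition of a first entry time; (iii) apply the general killing theorem to conclude that $\tX^j$, defined by $\tX^j_t = \ttX^j_t$ for $t < \ttt^j$ and $\tX^j_t = \D$ for $t \geq \ttt^j$, is a right process on $\tcG^j$ with lifetime exactly $\ttt^j$. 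Right-continuity of paths of $\tX^j$ is inherited from $\ttX^j$ on $[0,\ttt^j)$ and is trivial on $[\ttt^j,\infty)$ since the process is then constantly $\D$; the (strong) Markov property of the killed process is the content of the cited theorem.

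I should also note that the condition~\eqref{eq:G_GC:gluing, no jumps onto shadow parts}, namely $\PV_v^j(\ttX^j_{\ttt^j_\e} \in \tcG_s^j) = 0$ for $\e < \d$, is the hypothesis that guarantees $\tcG_s^j$ cannot be reached instantaneously from a vertex by a small jump, which is what makes the killed process non-degenerate (its lifetime is a.s.\ positive when started away from $\tcG_s^j$) and which will matter when this lemma is later combined with Remark~\ref{rem:G_GC:on killing on absorbing set, Feller data}; but for the bare assertion that $\tX^j$ is a right process with lifetime $\ttt^j$, one only needs the terminal-time property of $\ttt^j$ and the general killing theorem.

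The main obstacle — really the only subtlety — is bookkeeping about the state space: $\tcG^j$ is $\ttcG^j$ with the excrescent half-lines removed but with the vertices and the initial (open) portions of the shadow edges retained, so one must be slightly careful that this is still a Lusin (or at least the kind of) topological space on which the right-process formalism applies, and that the topology induced from $\ttcG^j$ on $\tcG^j$ agrees with its intrinsic metric-graph-like topology. Since $\tcG_s^j$ is closed and $\tcG^j$ is its open complement, this is routine, and no genuinely new argument is required beyond citing the appendix's killing theorem and checking its hypotheses.
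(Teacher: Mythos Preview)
Your approach is essentially the same as the paper's: both recognize $\tcG_s^j$ as closed, identify $\ttt^j$ as its d\'ebut, and invoke a general killing result from Sharpe's framework. The paper cites \cite[Corollary~12.24]{Sharpe88} directly and verifies that the regular set $\{g:\PV_g^j(\ttt^j=0)=1\}$ equals $\tcG_s^j$ (using normality and right continuity), which is the specific hypothesis that corollary requires.

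One small caution: you suggest that ``citing the appendix's killing theorem and checking its hypotheses'' suffices, but Theorem~\ref{theo:C_MS:killing on absorbing set} requires $F$ to be \emph{absorbing}, and $\tcG_s^j$ is not absorbing for $\ttX^j$ --- the Brownian motion can enter the excrescent half-line $\{e\}\times[\cR_s(e),\infty)$ and then return below $\cR_s(e)$. So the appendix result is not literally applicable here; you need the more general terminal-time/d\'ebut killing (Sharpe~12.24), exactly as the paper does. Your discussion of the terminal-time property already contains the right idea, so this is only a matter of pointing to the correct external reference rather than the appendix.
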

\begin{proof}
  $\ttX^j$ is a right process with infinite lifetime. By employing~\cite[Corollary~12.24]{Sharpe88}, it suffices to observe that
  $\ttt^j$ is the debut of the closed, thus nearly optional set $\tcG_s^j$, 
  and the regular set of the killing time $\ttt^j$ reads
   \begin{align*}
     F := \big\{ g \in \ttcG^j: \PV_g^j(\ttt^j = 0) = 1 \big\} = \tcG_s^j,
   \end{align*}
  as $\ttX^j$ is a right continuous, normal process and $\tcG_s^j$ is closed.
\end{proof}

We would like to point out that the just introduced processes $\tX^j$ are not Brownian motions on a metric graph
in the sense of~\cite[Definitions~2.1, A.1, A.2]{WernerMetricA}
anymore, as $\tcG^j$ is not a metric graph.
Thus, we will not be able to apply any results on Brownian motions for $\tX^j$ in the upcoming development.

%\subsection{Letting $X^{j}$ be the Mapping of $\tX^{j}$ to the Subspace $\cG^{j} \subseteq \cG$}
\subsection{Letting \texorpdfstring{$X^{j}$}{Xj} be the Mapping of \texorpdfstring{$\tX^{j}$}{X'j} to the Subspace \texorpdfstring{$\cG^{j} \subseteq \cG$}{Gj}}

We need to fit the subspaces $\tcG^j$ of $\ttcG^j$ to the corresponding subspaces of $\cG$.
To this end, we introduce the topological subspaces $\cG^{-1}$, $\cG^{+1}$ of~$\cG$ by
 \begin{align*}
  \cG^j := \cV^j \cup \bigcup_{l \in \cE^j \cup \cI^j \cup \cI_s} \big( \{l\} \times (0, \cR_l) \big),
 \end{align*}
and consider the mapping $\psi^j \colon \tcG^j \rightarrow \cG^j$ defined by
 \begin{equation} \label{eq:G_GC:def psi}
 \begin{aligned}
  & \forall i \in \cI_s, x \in (0, \cR_i): \quad
   \psi^j \big( (e_i^j, x) \big) := 
    \begin{cases}
     (i,x), & i \in \cI_s^j, \\
     (i, \cR_i-x), & i \in \cI_s^{-j},
    \end{cases} \\
  & \psi^j = \id \text{ otherwise}. 
 \end{aligned}
  \end{equation}
Clearly, $\psi^j$ is a bijective mapping, with its inverse $(\psi^j)^{-1} =: \varphi^j \colon \cG^j \rightarrow \tcG^j$ being given by
 \begin{align*}
  & \forall i \in \cI_s, x \in \big( 0, \cR_i \big): \quad
   \varphi^j \big( (i, x) \big) := 
    \begin{cases}
     (e_i^j,x), & i \in \cI_s^j, \\
     (e_i^j, \cR_i-x), & i \in \cI_s^{-j},
    \end{cases} \\
  & \varphi^j = \id \text{ otherwise}.
 \end{align*}
Furthermore, $\psi^j$ is a continuous mapping, as it is continuous inside every edge and its preimages of balls with sufficiently small radius 
around vertices $v \in \cV^j$ coincide with the corresponding balls of $\tcG^j$.

$\tX^j$ is a right process on $\tcG^j$,
$\psi^j$ is a bijective and measurable map from $\tcG^j$ onto $\cG^j$, and $t \mapsto \psi^j(\tX_t^j)$ is right continuous
(as $\psi^j$ is continuous and $t \mapsto \tX_t^j$ is right continuous).
Thus, the following result is a direct consequence of~\cite[Corollary~(13.7)]{Sharpe88}:
 
\begin{lemma} \label{lemma:G_GC:Xj is right process}
 The process $X^j := \psi^j(\tX^j)$, resulting from the state space mapping of $\tX^j$ by $\psi^j$,
 is a right process on $\psi^j(\tcG^j) = \cG^j$ with lifetime $\z^j = \widetilde{\z}{}^j = \ttt^j$.
\end{lemma}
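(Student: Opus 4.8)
The plan is to obtain the statement as an immediate application of the state-space transformation theorem for right processes, \cite[Corollary~(13.7)]{Sharpe88}; since that corollary is being invoked, the proof reduces entirely to checking its hypotheses, essentially all of which have been assembled in the paragraph preceding the statement. First I would record that, by Lemma~\ref{lemma:G_GC:tXj is right process}, $\tX^j$ is a right process on $\tcG^j$ with lifetime $\ttt^j$, and that $\tcG^j$, being an open subspace of the metric graph $\ttcG^j$ (namely $\ttcG^j$ with the closed set $\tcG_s^j$ removed), is a separable, metrizable, locally compact space and hence an admissible state space for a right process. Next I would verify that $\psi^j$ qualifies as an admissible transformation map: it is a bijection onto $\cG^j$ with the explicit inverse $\varphi^j$ displayed after~\eqref{eq:G_GC:def psi}, it is continuous (as recorded above) and therefore Borel measurable, and by the same elementary argument (it is built from $\id$ and the reflections $x \mapsto \cR_i - x$) its inverse $\varphi^j$ is continuous as well, so $\psi^j$ is in fact a homeomorphism; in particular $\cG^j \cong \tcG^j$ is again separable, metrizable and locally compact.

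I would then extend $\psi^j$ to the cemetery by setting $\psi^j(\D) := \D$ and observe that the composed path $t \mapsto \psi^j(\tX_t^j)$ is right continuous in $\cG^j \cup \{\D\}$, since $t \mapsto \tX_t^j$ is right continuous and $\psi^j$ is continuous. At that point \cite[Corollary~(13.7)]{Sharpe88} applies verbatim and yields that $X^j = \psi^j(\tX^j)$ is a right process on $\psi^j(\tcG^j) = \cG^j$. For the lifetime assertion, I would use that $\D$ lies outside both $\tcG^j$ and $\cG^j$ and is fixed by $\psi^j$, so that $X_t^j = \D$ holds if and only if $\tX_t^j = \D$, which by Lemma~\ref{lemma:G_GC:tXj is right process} happens precisely for $t \geq \ttt^j$; hence the lifetime $\z^j$ of $X^j$ coincides with the lifetime $\widetilde{\z}{}^j = \ttt^j$ of $\tX^j$.

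There is essentially no obstacle here. The only point demanding a moment's thought is confirming that $\psi^j$ meets the measurability and path-regularity conditions of \cite[Corollary~(13.7)]{Sharpe88} and that $\cG^j$ is an admissible state space; both are immediate from the continuity of $\psi^j$ together with the continuity of its explicitly given inverse $\varphi^j$, so no genuine difficulty arises. The content of the lemma is really just the invocation of the cited general-theory result, with the bookkeeping of the lifetime following from $\psi^j$ being a bijection that fixes the cemetery.
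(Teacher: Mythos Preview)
Your proposal is correct and follows exactly the same approach as the paper: both invoke \cite[Corollary~(13.7)]{Sharpe88} after verifying that $\tX^j$ is a right process, that $\psi^j$ is a bijective measurable map, and that $t \mapsto \psi^j(\tX_t^j)$ is right continuous. Your version is more detailed (you additionally spell out that $\psi^j$ is a homeomorphism, that the state spaces are admissible, and you explicitly verify the lifetime claim), but the argument is identical in substance.
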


%\subsection{Constructing $X$ as Alternating Copies of $X^{-1}$, $X^{+1}$}
\subsection{Constructing \texorpdfstring{$X$}{X} as Alternating Copies Process of \texorpdfstring{$X^{-1}$}{X{-1}}, \texorpdfstring{$X^{+1}$}{X{+1}}}

We apply the technique of~\cite{WernerConcat} to define 
the process $X$ obtained by forming alternating copies of~$X^{-1}$ and~$X^{+1}$ 
via the transfer kernels $K^{-1}$ and $K^{+1}$, given by
 \begin{equation} \label{eq:G_GC:transfer kernels}
 \begin{aligned}
  K^{-1} & := \sum_{i \in I_s^{-1}} \e_{\cLV_{+}(i)} \, \1_{\{i\}} \big( \p^1(X^{-1}_{\z^{-1}-}) \big)
           + \sum_{i \in I_s^{+1}} \e_{\cLV_{-}(i)} \, \1_{\{i\}} \big( \p^1(X^{-1}_{\z^{-1}-}) \big), \\
  K^{+1} & := \sum_{i \in I_s^{-1}} \e_{\cLV_{-}(i)} \, \1_{\{i\}} \big( \p^1(X^{+1}_{\z^{+1}-}) \big)
           + \sum_{i \in I_s^{+1}} \e_{\cLV_{+}(i)} \, \1_{\{i\}} \big( \p^1(X^{+1}_{\z^{+1}-}) \big).           
 \end{aligned}
 \end{equation}
That is, the transfer kernels implement the following rules for $j \in \{-1, +1\}$:
\begin{enumerate}
 \item $X$ is revived as $X^{+1}$ at $v = \cLV_{-j}(i)$, if $X^{-1}$ dies on $i \in \cI_s^j$; 
 \item $X$ is revived as $X^{-1}$ at $v = \cLV_{j}(i)$, if $X^{+1}$ dies on $i \in \cI_s^j$. 
\end{enumerate}
For later use, we give the following combined formula of the above definitions for the transfer kernels $K^j$, $j \in \{-1, +1\}$:
\begin{align}  \label{eq:G_GC:transfer kernels summarized}
 K^j 
  = k^j(i)
 := \begin{cases}
        \e_{\cLV_+(i)}, & i \in \cI_s^j, \\
        \e_{\cLV_-(i)}, & i \in \cI_s^{-j},
       \end{cases}
 \quad \text{for} \quad
 i := \p^1(X^{j}_{\z^{j}-}).
\end{align} 
 
\begin{lemma}
 $K^{j}$ is a transfer kernel from $X^j$ to $E^{-j}$.
\end{lemma}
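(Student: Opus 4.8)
The plan is to verify the conditions defining a transfer kernel in~\cite{WernerConcat}: $K^j$ must be a probability kernel into the state space $E^{-j}$ of the partner process, it must be measurable with respect to the pre-lifetime $\sigma$-algebra $\sF^j_{\z^j-}$ of $X^j$, and it must be an admissible restart distribution for the alternating-copies construction. By the combined formula~\eqref{eq:G_GC:transfer kernels summarized}, $K^j$ is always the Dirac mass $k^j(i)$ at a vertex singled out by the edge index $i := \p^1(X^j_{\z^j-})$, so the whole claim reduces to a precise description of how $X^j$ dies: I must show that on $\{\z^j < \infty\}$ the index $i$ is $\PV^j$-almost surely a well-defined element of $\cI_s$, and that the left limit $X^j_{\z^j-}$ is exactly the endpoint of $i$ lying in $\cV^{-j}$.

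First I would trace the death mechanism of $X^j$. By Lemma~\ref{lemma:G_GC:Xj is right process}, $\z^j = \widetilde{\z}{}^j = \ttt^j$ is the first entry time of the prototype Brownian motion $\ttX^j$ into the closed set $\tcG_s^j$; since $\ttX^j$ has infinite lifetime, this is the only way $\tX^j$, and hence $X^j$, can die. Moreover $\tcG_s^j$ has distance at least $\d = \min\{\cR_i : i \in \cI_s\} > 0$ from every vertex of $\cV^j$, because each shadow edge $e_i^j$ is external with sole endpoint $\cLV^j(e_i^j) \in \cV^j$ and $\cR_s(e_i^j) = \cR_i \geq \d$. Combining this with the continuity of $\ttX^j$ inside edges and with hypothesis~\eqref{eq:G_GC:gluing, no jumps onto shadow parts} (which I would convert, using that the jumps of a Brownian motion on a metric graph issue only from vertices with law determined by the Feller--Wentzell jump measure, into the statement that $\ttX^j$ almost surely never jumps into $\tcG_s^j$), one obtains that $\ttX^j$ enters $\tcG_s^j$ only by continuously crossing the interior point $(e_i^j, \cR_s(e_i^j))$ of a single shadow edge $e_i^j$, for a random $i \in \cI_s$. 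In particular $\p^1(\ttX^j_t) = e_i^j$ for all $t$ in a left-neighbourhood of $\ttt^j$.

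Next I would transport this through the killing and the state-space map. The paths of $\tX^j$ and $\ttX^j$ agree on $[0, \ttt^j)$, so $\p^1(\tX^j_t) = e_i^j$ near $\z^j$; and since $\psi^j$ maps $\{e_i^j\} \times (0, \cR_s(e_i^j))$ homeomorphically onto $\{i\} \times (0, \cR_i)$ by~\eqref{eq:G_GC:def psi}, it follows that $\p^1(X^j_t) = i$ near $\z^j$, so $\p^1(X^j_{\z^j-}) = i$ is unambiguous, and
\begin{align*}
 X^j_{\z^j-} = \lim_{t \uparrow \z^j} \psi^j(\tX^j_t) =
  \begin{cases}
   \cLV_{+}(i), & i \in \cI_s^j, \\
   \cLV_{-}(i), & i \in \cI_s^{-j},
  \end{cases}
\end{align*}
which in both cases is the endpoint of $i$ belonging to $\cV^{-j} \subseteq \cG^{-j} \subseteq E^{-j}$; this is precisely the transfer encoded by the two revival rules preceding~\eqref{eq:G_GC:transfer kernels summarized}. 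Hence $K^j = k^j(i)$ is a well-defined probability measure concentrated on a single point of $E^{-j}$. Measurability is then immediate: $i$ is a functional of the path of $X^j$ strictly before $\z^j$, hence $\sF^j_{\z^j-}$-measurable, and $k^j$ is a measurable selection of Dirac measures, so $\omega \mapsto K^j(\omega, B)$ is $\sF^j_{\z^j-}$-measurable for every Borel set $B \subseteq E^{-j}$. On $\{\z^j = \infty\}$, where the alternating-copies construction never invokes $K^j$, one simply fixes $K^j$ to be an arbitrary probability measure on $E^{-j}$. Assembling these observations gives exactly the definition of a transfer kernel from $X^j$ to $E^{-j}$.

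The step I expect to be the main obstacle is the ``continuous crossing'' assertion, that is, ruling out that $\ttX^j$ reaches $\tcG_s^j$ by a jump (or by an accumulation of jumps). Hypothesis~\eqref{eq:G_GC:gluing, no jumps onto shadow parts} only records that the rescaled exit distributions from small balls around vertices of $\cV^j$ put no mass on $\tcG_s^j$; promoting this local information to the global statement ``$\ttX^j$ a.s.\ never visits $\tcG_s^j$ other than by a continuous crossing of its boundary'' needs the normality of the process together with the description of its jumps as issuing from vertices according to the Feller--Wentzell jump measure, so that $\smash{p_4^v}(\tcG_s^j) = 0$ for every $v \in \cV^j$ and no jump ever lands in $\tcG_s^j$. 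Once this is in place, everything that remains is bookkeeping with the explicit forms of $\psi^j$ and $k^j$.
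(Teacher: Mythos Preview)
Your proposal is correct and follows essentially the same line as the paper's proof: trace $X^j$ back through $\psi^j$ and the killing to $\ttX^j$, use continuity inside edges together with hypothesis~\eqref{eq:G_GC:gluing, no jumps onto shadow parts} to rule out that $\ttt^j$ is realized by a jump into $\tcG_s^j$, conclude that $\ttX^j_{\ttt^j-} = \ttX^j_{\ttt^j}$ lies on the boundary $\{(e_i^j,\cR_s(e_i^j)) : i \in \cI_s\}$, and read off that $\p^1(X^j_{\z^j-}) \in \cI_s$ is $\sF^j_{[\z^j-]}$-measurable.

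The only notable difference is in how you handle the ``continuous crossing'' step. You propose to pass through the Feller--Wentzell jump measure, arguing that~\eqref{eq:G_GC:gluing, no jumps onto shadow parts} forces $p_4^v(\tcG_s^j)=0$ and hence no jump ever lands in $\tcG_s^j$. The paper bypasses this detour: it observes that a jump from a vertex $v$ into $\tcG_s^j$ would, by $d(v,\tcG_s^j)\geq\d$, simultaneously be the first exit from the $\e$-ball around $v$, so $\PV_v^j(\ttX^j_{\ttt^j_\e}\in\tcG_s^j)>0$ would follow, contradicting~\eqref{eq:G_GC:gluing, no jumps onto shadow parts} directly. Your route is valid but requires the identification of the jump law with $p_4^v$; the paper's is shorter because it stays at the level of exit distributions. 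Your additional computation that $X^j_{\z^j-}$, taken in the ambient graph $\cG$, equals the very vertex at which $K^j$ restarts is a nice observation the paper leaves implicit.
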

\begin{proof}
 With probability $1$, the process $\ttX^j$ cannot realize $\ttt^j$ through a direct jump from any vertex $v \in \cV^j$:
 Otherwise, this would imply $\PV^j_v \big( \ttX^j_{\ttt^j_\e} \in \tcG^j_s \big) > 0$, as~$\ttt^j \geq \ttt^j_\e$ holds for $\e < \d$, contradicting 
   our fundamental assumption~\eqref{eq:G_GC:gluing, no jumps onto shadow parts}.
 Furthermore, $\ttX^j$ is continuous on every edge, so
  $\ttX^j_{\ttt^j-}$ exists and is equal to $\ttX^j_{\ttt^j}$.
%, and is announced by the sequence 
%   \begin{align*}
%    \ttt^j_{(n)} := \inf \Big\{ t \geq 0: \ttX^j_t \in \bigcup_{e \in \cE_s^j} \big( \{e\} \times [\cR_s(e) - \tfrac{1}{n} \vee \cR_s(e), +\infty) \big) \Big\}, \quad n \in \N.
%   \end{align*}
 Thus, 
  \begin{align*}
   \tX^j_{\z^j-} = \lim_{t \upuparrows \z^j} \tX^j_t = \lim_{t \upuparrows \ttt^j} \ttX^j_t = \ttX^j_{\ttt^j}
  \end{align*}
 exists in $\big\{ \big( e, \cR_s(e) \big), e \in \cE^j_s \big\}$, 
 and 
  \begin{align} \label{eq:G_GC:death point of X}
   \p^1 \big( X^j_{\z^j-} \big) = \p^1 \big( \psi^j (\tX^j_{\z^j-}) \big) = \p^1 \big(\psi^j (\ttX^j_{\ttt^j})\big) 
  \end{align}
 exists in $\cI_s$. Therefore, $\p^1 \big( X^j_{\z^j-} \big) \in \sF^j_{[\z^j-]}$, so
 $K^j$ is indeed a probability kernel $K$ from $(\O^j, \sF^j_{[\z^j-]})$ to $(E^{-j}, \sE^{-j})$, that is, a transfer kernel.
\end{proof}

Let
 \begin{itemize}
  \item $\t^{-1}_{-1}$ be the first entry time of $X^{-1}$ into $\cG^{-1} \bs \cG^{+1}$, $\z^{-1}$ the lifetime of $X^{-1}$,
  \item $\t^{+1}_{+1}$ be the first entry time of  $X^{+1}$ into $\cG^{+1} \bs \cG^{-1}$, $\z^{+1}$ the lifetime of $X^{+1}$. 
 \end{itemize}    
Then, according to~\cite[Theorem~1.6]{WernerConcat}, $X$ is a right process on $\cG = \cG^{-1} \cup \cG^{+1}$ 
in case the following conditions hold true for all $g \in \cG^{-1} \cap \cG^{+1}$, $f \in b\sB(\cG)$, $h^{-1} \in b\sB(\cG^{-1})$, $h^{+1} \in b\sB(\cG^{+1})$:
 \begin{enumerate}
  \item $ \displaystyle \label{eq:G_GC:glueing, conc, i}
                  \EV^{-1}_g \Big( \int_0^{\t^{-1}_{-1}} e^{-\a t} \, f(X^{-1}_t) \, dt \Big) 
                = \EV^{+1}_g \Big( \int_0^{\t^{+1}_{+1}} e^{-\a t} \, f(X^{+1}_t) \, dt \Big) $;
  \item $ \displaystyle \label{eq:G_GC:glueing, conc, ii}
                  \EV^{-1}_g \big( e^{-\a \t^{-1}_{-1}} \, h^{-1}(X^{-1}_{\t^{-1}_{-1}}); \, \t^{-1}_{-1} < \z^{-1} \big) 
                = \EV^{+1}_g \big( e^{-\a \z^{+1}} \, K^{+1} h^{-1};                      \, \z^{+1} < \t^{+1}_{+1} \big) $, \\
              $ \displaystyle
                  \EV^{+1}_g \big( e^{-\a \t^{+1}_{+1}} \, h^{+1}(X^{+1}_{\t^{+1}_{+1}}); \, \t^{+1}_{+1} < \z^{+1} \big) 
                = \EV^{-1}_g \big( e^{-\a \z^{-1}} \, K^{-1} h^{+1};                      \, \z^{-1} < \t^{-1}_{-1} \big) $.
 \end{enumerate}

We are preparing the proof of these equalities: By construction, we have
 \begin{align*}
  \cG^{-1} \cap \cG^{+1}
  & = \bigcup_{i \in \cI_s} \big( \{i\} \times (0, \cR_i) \big), \\
  \cG^{j} \bs \cG^{-j} 
  & = \cV^j \cup \bigcup_{l \in \cE^j \cup \cI^j} \big( \{l\} \times (0, \cR_l) \big). 
 \end{align*}
By using the definition of $X^j$ and observing that $\varphi^j(\cG^{j} \bs \cG^{-j}) = \cG^{j} \bs \cG^{-j}$, we get
 \begin{align*}
  \t_j^j
  & = \inf \big\{ t \geq 0: X_t^j \in \cG^{j} \bs \cG^{-j} \big\} \\
  & = \inf \big\{ t \geq 0: \psi^j(\tX_t^j) \in \cG^{j} \bs \cG^{-j} \big\}  \\
  & = \inf \big\{ t \geq 0: \tX_t^j \in \cG^{j} \bs \cG^{-j} \big\}.
 \end{align*}
The process $\tX^j$ was constructed by killing $\ttX^j$ at $\ttt^j$. Thus, by introducing the first exit times of $\ttX^j$ from the shadow edges
 \begin{align*}
  \ttt_j^j := \inf \big\{ t \geq 0: \ttX_t^j \in \cG^{j} \bs \cG^{-j} \big\} 
            = \inf \big\{ t \geq 0: \ttX_t^j \notin \textstyle{ \bigcup_{i \in \cI_s} } \big( \{ e^j_i \} \times (0, \infty) \big) \big\}, 
 \end{align*}
we obtain the relation
 \begin{align} \label{eq:G_GC:first exit times backtrack}
  \t_j^j \wedge \z^j = \ttt^j_j \wedge \ttt^j.
 \end{align}

Turning to the actual proof of \ref{eq:G_GC:glueing, conc, i} and \ref{eq:G_GC:glueing, conc, ii},
let $g \in \cG^{-1} \cap \cG^{+1}$, that is, $g = (i,x)$ for some $i \in \cI_s$, $x \in ( 0, \cR_i )$. 
Choose $j \in \{-1, +1\}$ such that $i \in \cI_s^j$. By tracing $X^j$ back to $\ttX^j$ and employing that the latter is a Brownian motion on $\ttcG^j$, 
\cite[Lemma~2.4 and Corollary~2.10]{WernerMetricA} yield
%lemma \eqref{lem:G_BM:distributions stopped process} and corollary~\eqref{cor:G_BM:exit distributions} yield
% \begin{equation} 
% \begin{aligned}
\begin{align}
 \EV^{-j}_g \Big( \int_0^{\t^{-j}_{-j}} e^{-\a t} \, f \big(X^{-j}_t \big) \, dt \Big)  
 & = \EV^{-j}_{(\psi^{-j})^{-1}(g)} \Big( \int_0^{\t^{-j}_{-j}} e^{-\a t} \, f \big( \psi^{-j} \big(\ttX^{-j}_t, \ttt^{-j} \big) \big) \, dt \Big) \notag \\
 & = \EV^{-j}_{(e_i^{-j}, \cR(i)-x)} \Big( \int_0^{\ttt^{-j}_{-j} \wedge \ttt^{-j}} e^{-\a t} \, f \big( \psi^{-j}(\ttX^{-j}_t) \big) \, dt \Big) \label{eq:G_GC:glueing, conc, i I} \\
 & = \EV^B_{\cR(i)-x} \Big( \int_0^{\t_0 \wedge \t_{\cR(i)}} e^{-\a t} \, f \big( \psi^{-j}(e_i^{-j}, B_t) \big) \, dt \Big) \notag  \\
 & = \EV^B_{\cR(i)-x} \Big( \int_0^{\t_0 \wedge \t_{\cR(i)}} e^{-\a t} \, f \big( i, \cR(i)-B_t \big) \, dt \Big),   \notag
\end{align}
% \end{aligned}
% \end{equation}
and analogously 
\begin{equation} \label{eq:G_GC:glueing, conc, i II}
\begin{aligned}
 \EV^{j}_g \Big( \int_0^{\t^{j}_{j}} e^{-\a t} \, f \big(X^{j}_t \big) \, dt \Big)  
 & = \EV^{j}_{(e_i^{j}, x)} \Big( \int_0^{\ttt^{j}_{j} \wedge \ttt^{j}} e^{-\a t} \, f \big( \psi^{j}(\ttX^{j}_t) \big) \, dt \Big) \hspace{3em}  \\
 & = \EV^B_{x} \Big( \int_0^{\t_0 \wedge \t_{\cR(i)}} e^{-\a t} \, f \big( i, B_t \big) \, dt \Big).
\end{aligned}
\end{equation}
% Let $(\g_y, y \in \R)$ be the translation operators and $\ii$ be the reflection operator (see subsection~\ref{subsec:A_LP:translation centering reflection}) 
% for the one-dimensional Brownian motion $B$. 
% They give 
%  \begin{align*}
%   \EV^B_x \big( Y \circ \g_y \big) & = \EV^B_{x+y} \big(Y \big), \\
%   \EV^B_x \big( Y \circ \ii \big)  & = \EV^B_{-x} \big(Y \big)
%  \end{align*}
% for all $x, y \in \R$, $Y \in b\sF_\infty^B$, as well as
%  \begin{align*}
%   \t_x \circ \g_y
%   & = \inf \{ t \geq 0: B_t \circ \g_y = x \}
%     = \inf \{ t \geq 0: B_t + y = x \}
%     = \t_{x-y},  \\
%   \t_x \circ \ii
%   & = \inf \{ t \geq 0: B_t \circ \ii = x \}
%     = \inf \{ t \geq 0: -B_t = x \}
%     = \t_{-x}.
%  \end{align*}
% an application of the Theorems~\ref{theo:A_LP:translation} and~\ref{theo:A_LP:reflection} results in
By the spatial homogeneity and reflection invariance of the one-dimensional Brownian motion~$B$, we have
\begin{align*}
 \EV^B_{\cR(i)-x} \Big( \int_0^{\t_0 \wedge \t_{\cR(i)}} e^{-\a t} \, f \big( i, \cR(i)-B_t \big) \, dt \Big)  
%  & = \EV^B_{-x} \Big( \Big( \int_0^{\t_0 \wedge \t_{\cR(i)}} e^{-\a t} \, f \big( i, \cR(i)-B_t \big) \, dt \Big) \circ \g_{\cR(i)} \Big)  \\ 
%  & = \EV^B_{-x} \Big( \int_0^{\t_{-\cR(i)} \wedge \t_{0}} e^{-\a t} \, f \big( i, -B_t \big) \, dt \Big)  \\
%  & = \EV^B_{x} \Big( \Big( \int_0^{\t_{-\cR(i)} \wedge \t_{0}} e^{-\a t} \, f \big( i, -B_t \big) \, dt \Big) \circ \ii \Big)  \\
 & = \EV^B_{x} \Big( \int_0^{\t_0 \wedge \t_{\cR(i)}} e^{-\a t} \, f \big( i, B_t \big) \, dt \Big),
\end{align*}
which proves the equality of~\eqref{eq:G_GC:glueing, conc, i I} and~\eqref{eq:G_GC:glueing, conc, i II}, and thus concludes~\ref{eq:G_GC:glueing, conc, i}.

Coming to \ref{eq:G_GC:glueing, conc, ii}, we will prove both assertions simultaneously, as they only differ in the initial process. 
Let $j \in \{-1, +1\}$. We start by reducing the first expectation to~$\ttX^j$, and obtain with the help of equation~\eqref{eq:G_GC:first exit times backtrack} 
the identity
\begin{align*}
 & \EV^{-j}_g \big( e^{-\a \t^{-j}_{-j}} \, h^{-j} \big(X^{-j}_{\t^{-j}_{-j}} \big); \, \t^{-j}_{-j} < \z^{-j} \big)  \\
 & = \EV^{-j}_{(\psi^{-j})^{-1}(g)} \Big( e^{-\a \ttt^{-j}_{-j}} \, h^{-j} \big(\psi^{-j}(\ttX^{-j}_{\ttt^{-j}_{-j}}) \big); \, \ttt^{-j}_{-j} < \ttt^{-j} \Big), 
\end{align*}
where $(\psi^{-j})^{-1}(g) = (e_i^{-j}, \cR_i-x)$ or $(\psi^{-j})^{-1}(g) = (e_i^{-j}, x)$ depending on whether $i \in \cI_s^j$ or $ i \in \cI_s^{-j}$.
For all that follows, we define for any $g \in \ttcG^j$ the first hitting time~$\ttH^j_g$ of the set~$\{g\}$ by the process~$\ttX^j$.
By the continuity of $\ttX^j$ inside the edges,
we see that $\PV^j_{(e^j_i,y)}$-a.s.\ for any $i \in \cI_s$, 
the relation
 \begin{align*}
  \ttt_j^{j} = \ttH^j_{v} \quad \text{on $\big\{ \ttt^{j}_{j} < \ttt^{j} \big\} = \big\{ \ttH^j_{v} < \ttH^j_{(e^j_i, \cR_s(e^j_i))} \big\}$}
 \end{align*}
holds true with $v := \cLV(e^j_i)$, so we have
 \begin{align*}
  \psi^{-j}(\ttX^{-j}_{\ttt^{-j}_{-j}})
  = \cLV(e_i^{-j})
  = \begin{cases}
     \cLV_+(i), & i \in \cI_s^j, \\
     \cLV_-(i), & i \in \cI_s^{-j}.
    \end{cases}
 \end{align*}
%Furthermore,
% \begin{align*}
%  \ttt^j = \ttH^j_{(e^j_i, \cR_s(e^j_i))} \quad \text{on $\big\{ \ttt^{j} < \ttt^{j}_{j} \big\} = \big\{ \ttH^j_{(e^j_i, \cR_s(e^j_i))} < \ttH^j_{v} \big\}$}
% \end{align*}
%holds, as $\ttt^{j}_{j} \leq \ttt^{j}$ in case $\ttt^j = \ttH^j_{(e^j_k, \cR_s(e^j_k))}$ for some $k \neq i$.
% Thus, by the definition of the transfer kernels and equation \eqref{eq:G_GC:death point of X},
Therefore, we get
\begin{align*}
 & \EV^{-j}_g \big( e^{-\a \t^{-j}_{-j}} \, h^{-j} \big(X^{-j}_{\t^{-j}_{-j}} \big); \, \t^{-j}_{-j} < \z^{-j} \big)  \\
 & = \EV^{-j}_{(\psi^{-j})^{-1}(g)} \Big( e^{-\a \ttt^{-j}_{-j}} \, h^{-j} \big(\psi^{-j}(\ttX^{-j}_{\ttt^{-j}_{-j}}) \big); \, \ttt^{-j}_{-j} < \ttt^{-j} \Big)  \\
 & = 
  \begin{cases}
   \EV^{-j}_{(e_i^{-j}, \cR(i)-x)} \Big( e^{-\a \ttH^j_{v}} \, h^{-j} \big( \cLV_+(i) \big); \, \ttH^{-j}_{v} < \ttH^{-j}_{(e^{-j}_i, \cR_s(e^{-j}_i))} \Big), & i \in \cI_s^j,  \\
   \EV^{-j}_{(e_i^{-j}, x)}        \Big( e^{-\a \ttH^j_{v}} \, h^{-j} \big( \cLV_-(i) \big); \,  \ttH^{-j}_{v} < \ttH^{-j}_{(e^{-j}_i, \cR_s(e^{-j}_i))} \Big), & i \in \cI_s^{-j}. 
  \end{cases}
\end{align*}
 %holds $\PV_{(e^j_i, x)}$-a.s.\ on $\{ \ttt^{-j}_{-j} < \ttt^{-j} \}$.
 But $\ttX^{-j}$ is a Brownian motion on $\ttcG^{-j}$, so \cite[Corollary~2.10, Remark~2.9]{WernerMetricA}
 together with $\cR_s(e^{-j}_i) = \cR(i)$ yield
\begin{equation} \label{eq:G_GC:glueing, conc, ii I}
\begin{aligned}
 & \EV^{-j}_g \big( e^{-\a \t^{-j}_{-j}} \, h^{-j} \big(X^{-j}_{\t^{-j}_{-j}} \big); \, \t^{-j}_{-j} < \z^{-j} \big)  \\
 & = 
  \begin{cases}
   h^{-j} \big( \cLV_+(i) \big) \, \EV^B_{\cR(i)-x} \big( e^{-\a \t_0}; \, \t_0 < \t_{\cR(i)} \big), & i \in \cI_s^j, \\
   h^{-j} \big( \cLV_-(i) \big) \, \EV^B_{x}        \big( e^{-\a \t_0}; \, \t_0 < \t_{\cR(i)} \big), & i \in \cI_s^{-j}.
  \end{cases}
 %& = \EV^{-j}_{(e_i^{-j}, \cR(i)-x)} \big( e^{-\a \ttt^{-j}_{-j}} \, h^{-j} \big(\psi^{-j}(\ttX^{-j}_{\ttt^{-j}_{-j}}) \big); \ttt^{-j}_{-j} < \ttt^{-j} \big)  \\
 %& = \EV^B_{\cR(i)-x} \big( e^{-\a \t_0} \, h^{-j}( \cLV(e_i^{-j})); \t_0 < \t_{\cR(i)} \big)  \\
 %& = h^{-j}(\cLV(e_i^{-j})) \, \frac{\sinh \big( \sqrt{2\a} \big( \cR(i)-(\cR(i)-x) \big) \big)}{\sinh \big( \sqrt{2\a} \big( \cR(i)- 0 \big) \big)}  \\
 %& = h^{-j}(v) \, \frac{\sinh \big( \sqrt{2\a} \, x \big)}{\sinh \big( \sqrt{2\a} \, \cR(i) \big)}, 
\end{aligned}
\end{equation}

Next, we employ the same techniques as above in order to compute the right-hand sides of \ref{eq:G_GC:glueing, conc, ii}.
Equations~\eqref{eq:G_GC:death point of X} and~\eqref{eq:G_GC:first exit times backtrack} give
\begin{align*}
 & \EV^{j}_g \big( e^{-\a \z^{j}} \, K^{j} h^{-j}; \,  \z^{j} < \t^{j}_{j} \big)   \\
 & = \EV^{j}_{(\psi^{j})^{-1}(g)} \Big( e^{-\a \ttt^{j}} \, k^{j} \big( \p^1\big( \psi^j(\ttX^j_{\ttt^j}) \big) \big) \, h^{-j}; \, \ttt^{j} < \ttt^{j}_{j} \Big).
\end{align*}
We observe that
 \begin{align*}
  \ttt^j = \ttH^j_{(e^j_i, \cR_s(e^j_i))} \quad \text{on $\big\{ \ttt^{j} < \ttt^{j}_{j} \big\} = \big\{ \ttH^j_{(e^j_i, \cR_s(e^j_i))} < \ttH^j_{v} \big\}$},
 \end{align*}
as $\ttt^{j}_{j} \leq \ttt^{j}$ in case $\ttt^j = \ttH^j_{(e^j_k, \cR_s(e^j_k))}$ for some other $k \neq i$.
Thus, we have 
 \begin{align*}
  \p^1 \big( \psi^j(\ttX^j_{\ttt^j}) \big)  = \p^1 \big( \psi^j\big( (e^j_i, \cR_s(e^j_i)) \big)\big) \quad \text{$\PV_{(e^j_i, x)}$-a.s.\ on  $\{ \ttt^{j} < \ttt^{j}_{j} \}$},
 \end{align*}
and because $\psi^j$ maps $e^j_i$ to $i$,
the definition of the transfer kernel $K^j$, which was summarized in equation~\eqref{eq:G_GC:transfer kernels summarized}, gives 
\begin{align*}
 K^{j} 
 %= k^{j} ( \p^1(\psi^j(\ttX^j_{\ttt^j_j})), \,\cdot\,) 
 %= k^{j} ( \p^1(\psi^j( (e^j_i, \cR_s(e^j_i)) ), \,\cdot\,) 
 %= k^{j}(i, \,\cdot\,)
 = k^j \big(\p^1 \big( \psi^j\big( (e^j_i, \cR_s(e^j_i)) \big) \big)
 = \begin{cases}
    \e_{\cLV_+(i)}, & i \in \cI_s^j, \\
    \e_{\cLV_-(i)}, & i \in \cI_s^{-j}.
   \end{cases}
\end{align*}
This results in
\begin{equation} \label{eq:G_GC:glueing, conc, ii II}
\begin{aligned}
 & \EV^{j}_g \big( e^{-\a \z^{j}} \, K^{j} h^{-j}; \,  \z^{j} < \t^{j}_{j} \big)   \\
 & = \EV^{j}_{(\psi^{j})^{-1}(g)} \Big( e^{-\a \ttt^{j}} \, k^{j} \big( \p^1\big( \psi^j(\ttX^j_{\ttt^j}) \big) \big) \, h^{-j}; \, \ttt^{j} < \ttt^{j}_{j} \Big)  \\
 & =
  \begin{cases}
   \EV^j_{(e_i^j, x)}        \Big( e^{-\a \ttH^j_{(e^j_i, \cR_s(e^j_i))}} \, h^{-j} \big( \cLV_+(i) \big); \, \ttH^j_{(e^j_i, \cR_s(e^j_i))} < \ttH^j_{v} \Big), & i \in \cI_s^j,  \\
   \EV^j_{(e_i^j, \cR(i)-x)} \Big( e^{-\a \ttH^j_{(e^j_i, \cR_s(e^j_i))}} \, h^{-j} \big( \cLV_-(i) \big); \, \ttH^j_{(e^j_i, \cR_s(e^j_i))} < \ttH^j_{v} \Big), & i \in \cI_s^{-j} 
  \end{cases} \\
 & =
  \begin{cases}
   h^{-j} \big( \cLV_+(i) \big) \, \EV^B_{x} \big( e^{-\a \t_{\cR(i)}};        \, \t_{\cR(i)} < \t_0 \big), & i \in \cI_s^j, \\
   h^{-j} \big( \cLV_-(i) \big) \, \EV^B_{\cR(i)-x} \big( e^{-\a \t_{\cR(i)}}; \, \t_{\cR(i)} < \t_0 \big), & i \in \cI_s^{-j}.
  \end{cases}
\end{aligned}
\end{equation}
Now, the first passage time formulas for the one-dimensional Brownian motion~$B$ (cf.~\cite[Section~1.7]{ItoMcKean74}) give 
\begin{align*}
 \EV^B_{\cR(i)-x} \big( e^{-\a \t_0}; \t_0 < \t_{\cR(i)} \big)
 & = \frac{\sinh \big( \sqrt{2\a} \, x \big)}{\sinh \big( \sqrt{2\a} \, \cR(i) \big)} 
   = \EV^B_{x} \big( e^{-\a \t_{\cR(i)}} ;  \t_{\cR(i)} < \t_0 \big), \\
 \EV^B_{x} \big( e^{-\a \t_0}; \t_0 < \t_{\cR(i)} \big)
 & = \frac{\sinh \big( \sqrt{2\a} \, (\cR(i) - x) \big)}{\sinh \big( \sqrt{2\a} \, \cR(i) \big)} 
   = \EV^B_{\cR(i)-x} \big( e^{-\a \t_{\cR(i)}} ;  \t_{\cR(i)} < \t_0 \big).
\end{align*}
A comparison of the equations~\eqref{eq:G_GC:glueing, conc, ii I} and~\eqref{eq:G_GC:glueing, conc, ii II} then proves the equalities in~\ref{eq:G_GC:glueing, conc, ii}.

We have shown that the conditions of~\cite[Theorem~1.6]{WernerConcat} are fulfilled and thus have proved:
\begin{lemma} \label{lem:G_GC:glued process is right process}
 The process $X$ which is obtained by forming alternating copies of~$X^{-1}$ and~$X^{+1}$
 via the transfer kernels~$K^{-1}$ and~$K^{+1}$, as defined by equation~\eqref{eq:G_GC:transfer kernels},
 is a right process on~$\cG^{-1} \cup \cG^{+1} = \cG$.
\end{lemma}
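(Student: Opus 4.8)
The plan is to invoke the alternating-copies concatenation theorem \cite[Theorem~1.6]{WernerConcat}, whose hypotheses have, in fact, almost entirely been assembled in the development above, so that the proof reduces to collecting these pieces. First I would recall the ingredients already in place: $X^{-1}$ and $X^{+1}$ are right processes on the topological spaces $\cG^{-1}$ and $\cG^{+1}$ by Lemma~\ref{lemma:G_GC:Xj is right process}; the kernels $K^{-1}$, $K^{+1}$ from \eqref{eq:G_GC:transfer kernels} are genuine transfer kernels from $X^j$ to $E^{-j}$ (the unlabelled lemma above); and $\cG = \cG^{-1}\cup\cG^{+1}$ with overlap $\cG^{-1}\cap\cG^{+1}=\bigcup_{i\in\cI_s}(\{i\}\times(0,\cR(i)))$. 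It then remains only to check the two compatibility conditions \ref{eq:G_GC:glueing, conc, i} and \ref{eq:G_GC:glueing, conc, ii} of that theorem, for every point $g$ in the overlap and all bounded measurable test functions.

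I would verify these conditions pointwise at an arbitrary overlap point $g=(i,x)$ with $i\in\cI_s$, $x\in(0,\cR(i))$, reducing everything to the one-dimensional Brownian motion. The device is that $X^j=\psi^j(\tX^j)$, where $\tX^j$ is $\ttX^j$ killed on the excrescent shadow part $\tcG_s^j$, and $\ttX^j$ is a bona fide Brownian motion on the metric graph $\ttcG^j$; hence on the single shadow edge $e^j_i$ the motion is an orientation-adjusted copy of a standard Brownian motion $B$ on $(0,\cR(i))$ killed upon exit. Using \cite[Lemma~2.4, Corollary~2.10, Remark~2.9]{WernerMetricA} for $\ttX^j$, together with the backtracking identity $\t^j_j\wedge\z^j=\ttt^j_j\wedge\ttt^j$ from \eqref{eq:G_GC:first exit times backtrack} and the description of the death point \eqref{eq:G_GC:death point of X}, both sides of \ref{eq:G_GC:glueing, conc, i} become the $\a$-resolvent of $B$ evaluated at $x$ or at $\cR(i)-x$, and both sides of \ref{eq:G_GC:glueing, conc, ii} become one-sided exit Laplace transforms $\EV^B_{x}(e^{-\a\t_0};\t_0<\t_{\cR(i)})$ or $\EV^B_{x}(e^{-\a\t_{\cR(i)}};\t_{\cR(i)}<\t_0)$, weighted by the value of the boundary function $h^{-j}$ at the endpoint picked out by $K^j$.

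The remaining step is purely one-dimensional. Condition \ref{eq:G_GC:glueing, conc, i} follows from spatial homogeneity and reflection invariance of $B$, which identify its resolvent started at $x$ with the reflected one started at $\cR(i)-x$. Condition \ref{eq:G_GC:glueing, conc, ii} follows from the first-passage formulas $\EV^B_{x}(e^{-\a\t_0};\t_0<\t_{\cR(i)})=\sinh(\sqrt{2\a}(\cR(i)-x))/\sinh(\sqrt{2\a}\cR(i))$ and $\EV^B_{x}(e^{-\a\t_{\cR(i)}};\t_{\cR(i)}<\t_0)=\sinh(\sqrt{2\a}\,x)/\sinh(\sqrt{2\a}\cR(i))$, which exhibit the ``exit at $0$'' Laplace transform of one copy as the ``exit at $\cR(i)$'' Laplace transform of the reflected copy. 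Matching these with the endpoints $\cLV_\pm(i)$ selected by the transfer kernels in \eqref{eq:G_GC:transfer kernels} turns \ref{eq:G_GC:glueing, conc, ii} into an equality, and \cite[Theorem~1.6]{WernerConcat} then delivers the claim.

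I expect the main obstacle to be bookkeeping rather than analysis: for each shadow edge $i\in\cI_s$ one must keep straight whether $i\in\cI_s^j$ or $i\in\cI_s^{-j}$, hence whether $\psi^j$ acts as the identity or as the reflection $x\mapsto\cR(i)-x$ on $e^j_i$ (per \eqref{eq:G_GC:def psi}), and correspondingly which endpoint $\cLV_+(i)$ or $\cLV_-(i)$ receives the transfer mass. Getting these orientations consistent on both sides of \ref{eq:G_GC:glueing, conc, ii}, so that the death endpoint of $X^j$ coincides with the exit endpoint demanded of $X^{-j}$, is the delicate point; once the orientation conventions from \eqref{eq:G_GC:def psi} and \eqref{eq:G_GC:transfer kernels} are respected, the identities above close the argument.
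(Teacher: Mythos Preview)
Your proposal is correct and follows essentially the same approach as the paper: the paper's proof is precisely the verification of conditions~\ref{eq:G_GC:glueing, conc, i} and~\ref{eq:G_GC:glueing, conc, ii} carried out in the text immediately preceding the lemma, reducing both sides to the one-dimensional Brownian motion via \cite[Lemma~2.4, Corollary~2.10, Remark~2.9]{WernerMetricA}, invoking reflection invariance for~\ref{eq:G_GC:glueing, conc, i} and the first-passage formulas for~\ref{eq:G_GC:glueing, conc, ii}, and then applying \cite[Theorem~1.6]{WernerConcat}. Your identification of the orientation bookkeeping as the main delicate point is also accurate.
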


%\subsection{Proving that $X$ is a Brownian Motion on $\cG$}
\subsection{Proving that \texorpdfstring{$X$}{X} is a Brownian Motion on \texorpdfstring{$\cG$}{G}}

As just seen, $X$ is a right process and therefore a strong Markov process on $\cG$.
In regard to \cite[Theorem~2.5]{WernerMetricA}, it suffices to analyze the stopped resolvent and the exit behavior from any edge
in order to show that $X$ is indeed a Brownian motion on $\cG$:

\begin{lemma} \label{lem:G_GC:glued process is BM}
 $X$ is a Brownian motion on $\cG$.
\end{lemma}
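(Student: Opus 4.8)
The plan is to invoke the characterisation of Brownian motions on a metric graph from \cite[Theorem~2.5]{WernerMetricA}. By Lemma~\ref{lem:G_GC:glued process is right process} the process $X$ is already a right process on $\cG$, hence right continuous, normal and strongly Markovian, so it only remains to verify the ``edge condition'': for every edge $l \in \cE \cup \cI$ and every interior point $x \in \{l\} \times (0,\cR_l)$, writing $\t_l := \inf\{ t \ge 0 : X_t \notin \{l\} \times (0,\cR_l) \}$ for the first exit time of $X$ from the interior of $l$, the stopped $\a$-resolvent $f \mapsto \EV_x\big( \int_0^{\t_l} e^{-\a t} f(X_t)\,dt \big)$ and the exit distribution $\PV_x(X_{\t_l} \in \,\cdot\,)$ agree with those of standard one-dimensional Brownian motion on $(0, \cR_l)$, absorbed at the endpoints, started from the point corresponding to $x$. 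Since $\cG^{-1} \cap \cG^{+1} = \bigcup_{i \in \cI_s} \{i\}\times(0,\cR_i)$ while every other edge of $\cG$ lies inside exactly one of the two subspaces $\cG^{-1}$, $\cG^{+1}$, two cases arise.

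\emph{Non-shadow edges.} Suppose $l \in \cE^j \cup \cI^j$, so that $\{l\}\times(0,\cR_l) \subseteq \cG^j \bs \cG^{-j}$, and fix an interior point $x$ of $l$. Started at $x$, the process $X$ must run as $X^j = \psi^j(\tX^j)$, since $x$ lies in $\cG^j$ alone; moreover $\psi^j = \id$ on $\cG^j \bs \cG^{-j}$ and on $\cV^j$. The interior of $l$ contains no vertex and is disjoint from $\tcG_s^j$, and $\ttX^j$ has infinite lifetime, is continuous inside edges, and can jump only from vertices; hence $\ttX^j$ stays in the interior of $l$ until $\t_l$, so $\t_l < \ttt^j$ a.s., $\t_l$ coincides with the corresponding exit time of $\ttX^j$, and $X_t = \tX^j_t = \ttX^j_t$ for all $t \le \t_l$. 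As $\ttX^j$ is a Brownian motion on $\ttcG^j$, its stopped resolvent and exit distribution on the interior of $l$ are those of one-dimensional Brownian motion, and the assertion transfers to $X$.

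\emph{Shadow edges.} Let $l = i \in \cI_s$ and $x = (i,x_0)$ with $x_0 \in (0,\cR_i)$. By construction, $X$ started at $x$ agrees, up to its first transfer time, with $X^j = \psi^j(\tX^j)$ for one of the two values $j \in \{-1,+1\}$, and the argument below is the same for either. On the shadow edge $e^j_i$ the map $\psi^j$ is the length-preserving bijection $y \mapsto y$ or $y \mapsto \cR_i - y$ onto $(0,\cR_i)$, and $\ttX^j$ --- being a Brownian motion on $\ttcG^j$, continuous inside edges and unable to jump from the interior of an edge --- behaves on $\{e^j_i\}\times(0,\cR_i)$, until it leaves that sub-interval, exactly like one-dimensional Brownian motion; by the reflection invariance of one-dimensional Brownian motion, so does the image $X^j$ on $\{i\}\times(0,\cR_i)$. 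It remains to identify $\t_i$. The endpoints of $i$ are the vertex $v_j := \cLV^j(e^j_i) \in \cV^j$ and the vertex $v_{-j} \in \cV^{-j}$, and $\psi^j$, extended continuously, maps the vertex end of $e^j_i$ to $v_j$ and its far end $\big(e^j_i, \cR_s(e^j_i)\big)$ to $v_{-j}$. If $X^j$ reaches $v_j$ before $v_{-j}$, then $\ttX^j$ merely passes through the vertex $v_j \in \cV^j$ into the remainder of $\ttcG^j$, so $\t_i$ is this passage time and $X_{\t_i} = v_j$. If $X^j$ reaches $v_{-j}$ first, then, because $v_{-j} \notin \cV^j$, this is precisely the lifetime $\z^j = \ttt^j$, at which --- by the continuity of $\ttX^j$ inside edges together with equation~\eqref{eq:G_GC:death point of X} --- the left limit $X^j_{\z^j-}$ equals $v_{-j}$, and the transfer kernel $K^j$ revives $X$ at $v_{-j}$; thus $\t_i = \z^j$ and, by continuity of the revival, $X_{\t_i} = v_{-j}$. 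In either case $X$ has run as one-dimensional Brownian motion on $(0,\cR_i)$ up to the endpoint-hitting time $\t_i$; pulling the resolvent integral back along the isometry $\psi^j$ (and again using reflection invariance) identifies the stopped resolvent as well. In particular $X$ is continuous inside every edge of $\cG$.

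Both cases together verify the hypotheses of \cite[Theorem~2.5]{WernerMetricA}, so $X$ is a Brownian motion on $\cG$. The only real subtlety lies in the second case, where a single edge of $\cG$ is stitched together from pieces of the two distinct killed processes $X^{-1}$ and $X^{+1}$: the crucial point is that the lifetime of $X^j$ along a shadow edge coincides with the first instant at which the process reaches the opposite vertex, so that the exit time from the interior of that edge always occurs no later than the first transfer, and no genuine ``gluing inside an edge'' needs to be analysed; the orientation reversal possibly built into $\psi^j$ is then harmlessly absorbed by the reflection invariance of one-dimensional Brownian motion.
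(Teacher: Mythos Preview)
Your proposal is correct and follows essentially the same approach as the paper: both invoke \cite[Theorem~2.5]{WernerMetricA}, split into the cases $l \notin \cI_s$ and $l \in \cI_s$, trace the process back through $X^j$ and $\tX^j$ to the Brownian motion $\ttX^j$, and use reflection invariance of one-dimensional Brownian motion on the shadow edges. The only notable difference is efficiency of presentation: the paper recycles the identities~\eqref{eq:G_GC:glueing, conc, i II}, \eqref{eq:G_GC:glueing, conc, ii I}, \eqref{eq:G_GC:glueing, conc, ii II} that were already computed when verifying the alternating-copies conditions, obtaining both the stopped resolvent and the joint law of $(H_X, X_{H_X})$ as one-line consequences, whereas you re-derive these facts narratively from the path properties; this makes your argument slightly longer but no less valid.
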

\begin{proof}
For mutual edges $i \in \cI_s$, we choose $j \in \{-1, +1\}$ such that $i \in \cI^j_s$.
Then we have $X_t = X^j_t$ for all $t < \t^j_j$ and $X_{R^1} \in \cLV(i)$, $\PV_{(i,x)}$-a.s., 
for the first revival time $R^1 = \inf \big\{ t \geq 0: X_t \in \cG^{-j} \bs \cG^{j} \big\}$. 
Therefore, $H_X = \t^j_j \wedge R^1$
holds true, and with equation~\eqref{eq:G_GC:glueing, conc, i II} we get
 \begin{align*}
 \EV_{(i,x)} \Big( \int_0^{H_X} e^{-\a t} \, f(X_t) \, dt \Big) 
 & = \EV^{j}_{(i,x)} \Big( \int_0^{\t^{j}_{j} \wedge \z^j} e^{-\a t} \, f \big(X^{j}_t \big) \, dt \Big)  \\
 & = \EV^B_{x} \Big( \int_0^{\t_0 \wedge \t_{\cR(i)}} e^{-\a t} \, f \big( i, B_t \big) \, dt \Big) \\
 & = \EV^B_{x} \Big( \int_0^{H_B} e^{-\a t} \, f(i, B_t) \, dt \Big).
 \end{align*}
For non-mutual edges $l \notin \cI_s$, on the other hand, choose $j \in \{-1, +1\}$ such that~$(l,x) \in \ttcG^j$.
Then $X^j_t = \ttX^j_t$ holds for all $t < \t^j_j$, $\PV_{(l,x)}$-a.s., 
and as $\ttX^j$ is itself a Brownian motion on~$\ttcG^j$, the above identity follows immediately.

Coming to the exit distribution from an edge, the identity
 \begin{align*}
  \PV_{(l,x)} \circ \big( H_X, X_{H_X} \big)^{-1} = \PV^B_{x} \circ \big( H_B, (l, B_{H_B}) \big)^{-1}
 \end{align*}
follows for edges $l \notin \cI_s$ from the corresponding property of $\ttX^{-1}$ or $\ttX^{+1}$  by~\cite[Theorem~2.5]{WernerMetricA}. 
In case $i \in \cI_s$, we choose $j \in \{-1, +1\}$ with $i \in \cI_s^j$.
By employing equations~\eqref{eq:G_GC:glueing, conc, ii I}, \eqref{eq:G_GC:glueing, conc, ii II} % and \ref{eq:G_GC:glueing, conc, ii} of above subsection,
and $H_X = \t^j_j \wedge R^1$ $\PV_{(i,x)}$-a.s.,
we get for all $\a > 0$, $h \in b\sB(\cG)$
 \begin{align*}
  & \EV_{(i,x)} \big( e^{-\a H_X} h(X_{H_X}) \big) \\
  & = \EV^j_{(i,x)} \big( e^{-\a \t^{j}_{j}} h(X^j_{\t^{j}_{j}}); \, \t^{j}_{j} < \z^j \big)
    + \EV^j_{(i,x)} \big( e^{-\a \z^j} K^j g; \, \z^j < \t^{j}_{j} \big) \\
  & = \EV^B_{x} \big( e^{-\a \t_0} \, h \big( \cLV_-(i) \big); \, \t_0 < \t_{\cR(i)} \big)
    + \EV^B_{x} \big( e^{-\a \t_{\cR(i)}} \, h \big( \cLV_+(i) \big); \,         \t_{\cR(i)} < \t_0 \big) \\
  & = \EV^B_{x} \big( e^{-\a H_B} h(i, B_{H_B}) \big),
 \end{align*}
which results in
 \begin{align*}
  \PV_{(i,x)} \circ \big( H_X, X_{H_X} \big)^{-1}
  & = \PV^B_{x} \circ \big( H_B, (i, B_{H_B}) \big)^{-1}. \qedhere
 \end{align*}
\end{proof}

%\subsection{Computing the \FW\ Data of $X$}
\subsection{Computing the \FW\ Data of \texorpdfstring{$X$}{X}}

The \FW\ data of~$X$, as given in~\cite[Theorem~1.2]{WernerMetricA}, is derived from its exit distributions from any arbitrarily small neighborhood of each vertex. 
$X$ is constructed via alternating copies of $X^{-1}$ and $X^{+1}$, so we first need to analyze their respective exit behavior.
To this end, we consider the exit times of $X^j$
 \begin{align*}
   \t^j_\e := & \inf \big\{ t \geq 0: d \big( X^j_t, X^j_0 \big) > \e \big\} 
 \end{align*}
together with the exit distributions $X^j_{\t^j_\e}$ for all small $\e > 0$. 
As we only have information on $\ttX^j$, we need to trace back the required data to these original processes.
Fix $v \in \cV$ and choose $j \in \{-1, +1\}$ such that $v \in \cV^j$, and let 
 \begin{align*}
   \ttt^j_\e := & \inf \big\{ t \geq 0: d \big( \ttX^j_t, \ttX^j_0 \big) > \e \big\}.
 \end{align*} 

Using the definition of $X^j$ and the isometric property of~$\psi^j$, we get for all $\e > 0$
 \begin{align*}
  \t^j_\e 
  & = \, \inf \big\{ t \geq 0: d \big( \psi^j(\tX^j_t), \psi^j(\tX^j_0) \big) > \e \big\} \\
  & = \, \inf \big\{ t \geq 0: d \big( \tX^j_t, \tX^j_0 \big) > \e \big\} \\
  & =:  \tt^j_\e.
 \end{align*}
By its definition, $\tX^j_t = \ttX^j_t$ holds for all $t < \ttt^j$, and as $\comp \BB_\e(v) \supseteq \tcG^j_s$, we obtain
 \begin{align*}
  \forall \e < \d: \quad \ttt^j_\e \leq \ttt^j \quad \text{$\PV^j_v$-a.s.}\,.
 \end{align*}
More precisely, we even get 
 \begin{align*}
  \forall \e < \d: \quad \ttt^j_\e < \ttt^j, \quad \text{if} \quad \ttt^j_\e \neq +\infty, \quad \text{$\PV^j_v$-a.s.},
 \end{align*}
because
 \begin{align*}
  \PV^j_v \big( \ttt^j_\e = \ttt^j, \ttt^j_\e < +\infty \big)
  & = \PV^j_v \big( \ttt^j_\e = \ttt^j, \ttX_{\ttt^j} \in \tcG^j_s, \ttt^j_\e < +\infty \big) \\
  & = \PV^j_v \big( \ttt^j_\e = \ttt^j, \ttX_{\ttt^j_\e} \in \tcG^j_s, \ttt^j_\e < +\infty \big) \\
  & \leq \PV^j_v \big( \ttX_{\ttt^j_\e} \in \tcG^j_s \big) = 0.
 \end{align*}
Therefore, we see that for all $\e < \d$,
 \begin{align*}
  \ttt^j_\e 
  & = \inf \big\{ t \in [0, \ttt^j): d(\ttX^j_t, \ttX^j_0) > \e \big\} \wedge \ttt^j \\
  & = \inf \big\{ t \in [0, \ttt^j): d(\tX^j_t, \tX^j_0) > \e \big\} \wedge \ttt^j \\
  & = \inf \big\{ t \geq 0: d(\tX^j_t, \tX^j_0) > \e \big\},
 \end{align*}
where we used that $\tX^j$ is a subprocess of $\ttX^j$ with lifetime $\ttt^j$, that is 
 \begin{align*}
   d(\tX^j_{\ttt^j}, \tX^j_0) = d( \D, \tX^j_0) = +\infty > \e.
 \end{align*}

%If this inequality were strict, that is if $\tt^j_\e < \ttt^j_\e$, then $\ttt^j_\e \leq \ttt^j$ would imply
% \begin{align*}
%   \ttX^j_{\tt^j_\e} = \tX^j_{\tt^j_\e} \in \comp \BB_\e(v),
% \end{align*}
%which leads to $\ttt^j_\e \leq \tt^j_\e$, contradicting the assumption.
We have thus shown:
\begin{lemma} \label{lem:G_GC:Feller data, exit times}
 Let $v \in \cV^j$. For all $\e < \d$, it holds $\PV^j_v$-a.s.\ that % $v \in \cV^j$,
  \begin{align*}
   \t^j_\e = \tt^j_\e = \ttt^j_\e,
  \end{align*}
 and
  \begin{align*}
   \ttt^j_\e < \ttt^j, \quad \text{if} \quad \ttt^j_\e < +\infty.
  \end{align*}
\end{lemma}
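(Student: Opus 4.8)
The plan is to establish the chain $\t^j_\e = \tt^j_\e = \ttt^j_\e$ by two separate, essentially unrelated arguments, and then to extract the strict inequality $\ttt^j_\e < \ttt^j$ on $\{\ttt^j_\e < +\infty\}$ as a byproduct of the second one. The first identity $\t^j_\e = \tt^j_\e$ will hold for every $\e > 0$ and needs no probability: since $X^j = \psi^j(\tX^j)$ and $\psi^j$ is an isometry of $\tcG^j$ onto $\cG^j$, we have $d(X^j_t, X^j_0) = d(\tX^j_t, \tX^j_0)$ for all $t$ (with the convention $d(\D,\,\cdot\,) = +\infty$ applied identically on both sides once the process is killed), so the first exit times from the $\e$-ball around the starting point agree path by path.

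For the second identity $\tt^j_\e = \ttt^j_\e$ I would fix $\e < \d$, recall that $\tX^j$ is $\ttX^j$ killed at the first entry time $\ttt^j$ into $\tcG_s^j$, and first record the geometric fact that $\tcG_s^j \subseteq \comp \BB_\e(v)$: every point of $\tcG_s^j$ sits at edge-coordinate at least $\cR_s(e) \ge \d > \e$ on its incident edge, hence at distance $> \e$ from $v$. Consequently, at time $\ttt^j$ the process $\ttX^j$ has already left $\BB_\e(v)$, which gives $\ttt^j_\e \le \ttt^j$ $\PV^j_v$-a.s. The content-bearing step is to upgrade this to a strict inequality on $\{\ttt^j_\e < +\infty\}$: if we had $\ttt^j_\e = \ttt^j < +\infty$, then, using right-continuity of $\ttX^j$ and closedness of $\tcG_s^j$ (so that $\ttX^j_{\ttt^j} \in \tcG_s^j$ whenever $\ttt^j < +\infty$), we would obtain $\ttX^j_{\ttt^j_\e} = \ttX^j_{\ttt^j} \in \tcG_s^j$, an event of $\PV^j_v$-probability zero by the standing assumption~\eqref{eq:G_GC:gluing, no jumps onto shadow parts}. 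This simultaneously proves the second assertion of the lemma.

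It then remains to assemble $\tt^j_\e = \ttt^j_\e$. On $[0,\ttt^j)$ the paths of $\tX^j$ and $\ttX^j$ coincide, and on $[\ttt^j, +\infty)$ the killed path $\tX^j$ equals $\D$, which is at distance $+\infty > \e$ from $v$. Hence: if $\ttt^j_\e < +\infty$, the strict inequality just shown says $\ttX^j$ leaves $\BB_\e(v)$ before the killing, so $\tX^j$ leaves it at the same instant $\ttt^j_\e$; and if $\ttt^j_\e = +\infty$, the path of $\ttX^j$ never leaves $\BB_\e(v)$, hence never meets $\tcG_s^j$, which forces $\ttt^j = +\infty$ and $\tX^j \equiv \ttX^j$, so $\tt^j_\e = +\infty$ as well. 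Either way $\tt^j_\e = \ttt^j_\e$. I expect the only step with real content to be the strict inequality $\ttt^j_\e < \ttt^j$, which is precisely where the hypothesis~\eqref{eq:G_GC:gluing, no jumps onto shadow parts} is consumed; everything else — in particular the treatment of the killed path and of the case $\ttt^j_\e = +\infty$ via the implication $\{\ttt^j_\e = +\infty\} \subseteq \{\ttt^j = +\infty\}$ — is careful but routine bookkeeping.
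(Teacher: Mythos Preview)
Your proposal is correct and follows essentially the same route as the paper: the isometry of $\psi^j$ gives $\t^j_\e = \tt^j_\e$, the geometric inclusion $\tcG^j_s \subseteq \comp \BB_\e(v)$ yields $\ttt^j_\e \le \ttt^j$, assumption~\eqref{eq:G_GC:gluing, no jumps onto shadow parts} upgrades this to a strict inequality on $\{\ttt^j_\e < +\infty\}$, and then the identification $\tt^j_\e = \ttt^j_\e$ follows by comparing the killed and unkilled paths. The only cosmetic difference is that the paper packages your final case split into a single chain of infima over $[0,\ttt^j)$ wedged with $\ttt^j$, using $d(\D,\tX^j_0) = +\infty$; your explicit treatment of the case $\ttt^j_\e = +\infty$ is equivalent.
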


\begin{corollary} \label{cor:G_GC:Feller data, exit distributions}
 For all $v \in \cV^j$, $\e < \d$, the exit distribution of $X^j$ is given by
  \begin{align*}
   X^j_{\t^j_\e} = \begin{cases}
                    \psi^j( \ttX^j_{\ttt^j_\e} ), & \ttt^j_\e < +\infty, \\
                    \D,                           & \ttt^j_\e = +\infty.
                   \end{cases}
  \end{align*}
\end{corollary}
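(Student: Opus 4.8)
The plan is to unwind the three layers of the construction, $\ttX^j \to \tX^j \to X^j = \psi^j(\tX^j)$, feeding in the time-coincidence already established in Lemma~\ref{lem:G_GC:Feller data, exit times}. First I would recall the two defining identities: by the state-space mapping one has $X^j_t = \psi^j(\tX^j_t)$ for every time $t$, with the usual convention that a state-space map fixes the cemetery, $\psi^j(\D) = \D$, and that $X^j_{+\infty} = \D$; and by the construction of $\tX^j$ as the process killed at the terminal time $\ttt^j$, one has $\tX^j_t = \ttX^j_t$ on $\{t < \ttt^j\}$ and $\tX^j_t = \D$ on $\{t \geq \ttt^j\}$.

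Next I would fix $v \in \cV^j$ and $\e < \d$ and apply Lemma~\ref{lem:G_GC:Feller data, exit times}, which gives $\t^j_\e = \tt^j_\e = \ttt^j_\e$ $\PV^j_v$-a.s.\ together with the strict inequality $\ttt^j_\e < \ttt^j$ on the event $\{\ttt^j_\e < +\infty\}$. On that event the killing identity applies at time $\t^j_\e = \ttt^j_\e$, so $\tX^j_{\t^j_\e} = \ttX^j_{\ttt^j_\e}$, and applying $\psi^j$ gives $X^j_{\t^j_\e} = \psi^j(\ttX^j_{\ttt^j_\e})$.

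On the complementary event $\{\ttt^j_\e = +\infty\}$ one has $\t^j_\e = +\infty$, and the process necessarily sits at the cemetery at that time: if $\ttt^j < +\infty$, then $\tX^j$ was already sent to $\D$ at its lifetime, whence $X^j_{\t^j_\e} = \psi^j(\D) = \D$; if $\ttt^j = +\infty$, then $X^j$ has infinite lifetime and never leaves $\BB_\e(v)$, so $X^j_{\t^j_\e} = X^j_{+\infty} = \D$ by convention. Assembling the two events yields the asserted formula.

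I do not expect any real obstacle: the substance is already contained in Lemma~\ref{lem:G_GC:Feller data, exit times}, and what remains is bookkeeping across $\ttX^j$, $\tX^j$, $X^j$ and their differing lifetimes. The one point to watch is the event $\{\ttt^j_\e = +\infty\}$, where one must distinguish whether the cemetery value comes from the killing at $\ttt^j$ or merely from the convention $X^j_{+\infty} = \D$, and ensure that the pointwise identity $X^j = \psi^j(\tX^j)$ (as provided by Lemma~\ref{lemma:G_GC:Xj is right process}) is applied consistently.
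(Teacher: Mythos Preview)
Your proposal is correct and matches the paper's approach: the paper states this corollary without proof, treating it as an immediate consequence of Lemma~\ref{lem:G_GC:Feller data, exit times}, and your argument is precisely the unwinding of definitions that makes this immediate. The case split on $\{\ttt^j_\e < +\infty\}$ versus $\{\ttt^j_\e = +\infty\}$ and the use of the strict inequality $\ttt^j_\e < \ttt^j$ from the lemma are exactly what is needed.
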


We are ready to compute the \FW\ data of $X$.
%Fix some $v \in \cV$ and choose $j \in \{-1, +1\}$ such that $v \in \cV^j$.
By Lemmas~\ref{lem:G_GC:Feller data, exit times} and~\ref{lemma:G_GC:Xj is right process}, we have for all $\e < \d$
 \begin{align*}
  \t^j_\e = \ttt^j_\e < \ttt^j = \z^j \quad \text{ on $\{ \z^j < +\infty \}$},
 \end{align*}
so $\t^j_\e < \z^j$ a.s.\ holds. On the other hand, $X_t = X^j_t$ holds for all $t < R^1 = \z^j$
(more formally, $X^j_t (\o_i) = X_t \big( (\o_1, \o_2, \ldots) \big)$ with $i = 1$ if $j = -1$, and $i= 2$ if $j = +1$)
by the construction of $X$, yielding
 \begin{align*}
   \PV_v \circ \big( {\t_\e}, X_{\t_\e} \big)^{-1} = \PV^j_v \circ \big({\t^j_\e},  X^j_{\t^j_\e} \big)^{-1}.
 \end{align*}

 Thus, if $v$ is not a trap, then  
 $\ttt^j_\e < +\infty$ holds $\PV^j_v$-a.s.\ for all sufficiently small~$\e > 0$ (see~\cite[Lemma~B.1]{WernerMetricA}), 
 and therefore $\t_\e < +\infty$ holds $\PV_v$-a.s.\ as well.
 By using the notations of~\cite[Theorem~1.2]{WernerMetricA} and backtracking $X$ to $\ttX^j$, we compute 
 for $\e < \d$, for all~$A \in \sB\big(\cG \bs \{v\} \big)$:
 \begin{align*}
%  \forall A \in \sB\big(\cG \bs \{v\} \big): \quad
  \nu^v_\e(A) 
  & = \frac{\PV_v \big( X_{\t_\e} \in A \big)}{\EV_v(\t_\e)} 
   = \frac{\PV^j_v \big( X^j_{\t^j_\e} \in A \big)}{\EV^j_v(\t^j_\e)} 
   = \frac{\PV^j_v \big( \psi^j( \ttX^j_{\ttt^j_\e} ) \in A \big)}{\EV^j_v(\ttt^j_\e)} 
   = \ttnu^{j,v}_\e \big( (\psi^j)^{-1} (A) \big),
 \end{align*}
where we naturally extend, here and in all that follows, the mapping $\psi^j \colon \tcG^j \rightarrow \cG^j$ to $\psi^j \colon \tcG^j \rightarrow \cG$.
This gives
 \begin{align*}
  K^v_\e 
  & = 1 + \frac{\PV_v \big( X_{\t_\e} = \D \big)}{\EV_v(\t_\e)} + \int_{\cG \bs \{v\}} \big( 1 - e^{-d(v,g)} \big) \, \nu^v_\e(dg) \\
  & = 1 + \frac{\PV^j_v \big( \ttX^j_{\ttt^j_\e} = \D \big)}{\EV^j_v(\ttt^j_\e)} + \int_{\tcG^j \bs \{v\}} \big( 1 - e^{-d(v,\psi^j(g))} \big) \, \ttnu^{j,v}_\e(dg) \\
  & = 1 + \frac{\PV^j_v \big( \ttX^j_{\ttt^j_\e} = \D \big)}{\EV^j_v(\ttt^j_\e)} + \int_{\ttcG^j \bs \{v\}} \big( 1 - e^{-d(v,g)} \big) \, \ttnu^{j,v}_\e(dg) \\
  & = \ttK^{j,v}_\e,
 \end{align*}
because $\psi^j$ is an isometry with  $\psi^j(v) = v$, and as $\ttnu^v_\e \big( \ttcG^j \bs \tcG^j \big) = 0$ holds due to the assumption~\eqref{eq:G_GC:gluing, no jumps onto shadow parts}.
Renormalization yields, again because $\psi$ is an isometry,
\begin{equation} \label{eq:G_GC:mu relation}
 \begin{aligned}
 \forall A \in \sB\big(\cG \bs \{v\} \big): \quad
  \mu^v_\e(A)
  & = \int_A \big( 1 - e^{-d(v,g)} \big) \, \frac{\nu^v_\e(dg)}{K^v_\e} \\
  & = \int_{\psi^{-1}(A)} \big( 1 - e^{-d(v,\psi(g))} \big) \, \frac{\ttnu^{j,v}_\e(dg)}{\ttK^v_\e} \\
  & = \ttmu^{j,v}_\e \big( (\psi^j)^{-1} (A) \big).
 \end{aligned}
\end{equation}

Next, introduce the topological subspaces 
    $\overbar{\tcG^j \bs \{v\}}$ of~$\overbar{\ttcG^j \bs \{v\}}$
and $\overbar{\cG^j \bs \{v\}}$ of~$\overbar{\cG \bs \{v\}}$, and
consider the continuous extension 
  of $\psi^j \colon \tcG^j \rightarrow \cG^j$ to $\overbar{\psi}^j \colon \overbar{\tcG^j \bs \{v\}} \rightarrow \overbar{\cG \bs \{v\}}$.
Continuity of $\overbar{\psi}^j$ dictates that the new points $\overbar{\tcG^j \bs \{v\}} \bs \tcG^j$ are mapped to
 \begin{equation} \label{eq:G_GC:psi continuation to compact i}
 \begin{aligned}
  i \in \cI^j_s(v): \quad
    & \overbar{\psi}^j \big( (e^j_i, 0+) \big) = \lim_{x \downdownarrows 0} \psi^j \big( (e^j_i,x) \big) % = \lim_{x \downdownarrows 0} \, (i, x)
    = (i, 0+), \\
    & \overbar{\psi}^j \big( (e^j_i, \cR_i-) \big) = \lim_{x \upuparrows \cR(i)} \psi^j \big( (e^j_i,x) \big) %= \lim_{x \upuparrows \cR(i)} (i, x) 
    = (i, \cR_i), \\
  i \in \cI^{-j}_s(v):  \quad
    & \overbar{\psi}^j \big( (e^j_i, 0+) \big) = \lim_{x \downdownarrows 0} \psi^j \big( (e^j_i,x) \big) %= \lim_{x \downdownarrows 0} \, (i,\cR_i-x) 
    = (i, \cR_i), \\
    & \overbar{\psi}^j \big( (e^j_i, \cR_i-) \big) = \lim_{x \upuparrows \cR(i)} \psi^j \big( (e^j_i,x) \big) %= \lim_{x \upuparrows \cR(i)} (i, \cR_i-x) 
    = (i, 0+),  
 \end{aligned}
 \end{equation}
and analogously
 \begin{equation} \label{eq:G_GC:psi continuation to compact ii}
 \begin{aligned}
  i \in \cI^j(v): \quad
   & \overbar{\psi}^j \big( (i, 0+) \big) = (i, 0+), \hspace*{0.6em} \text{ if $v = \cLV_-(i)$,} \\ 
   & \overbar{\psi}^j \big( (i, \cR_i-) \big) = (i, \cR_i-), \text{ if $v = \cLV_+(i)$,} \\
  e \in \cE^j(v): \quad
   & \overbar{\psi}^j \big( (e,0+) \big) = (e,0+), \\
  e \in \cE^j: \quad
   & \overbar{\psi}^j \big( (e,+\infty) \big) = (e,+\infty).
 \end{aligned}
 \end{equation}
 
Proceeding in the course of the proof of~\cite[Theorem~1.2]{WernerMetricA} for $\ttX^j$,
we extend the measures $\ttmu^{j,v}_\e$ to measures $\ttomu^{j,v}_\e$ on $\overbar{\ttcG^j \bs \{v\}}$ by
 \begin{align*}
  \ttomu^{j,v}_\e (A) := \ttmu^v_\e \big( A \cap \big(\ttcG^j \bs \{v\} \big) \big), \quad A \in \sB \big( \overbar{\ttcG^j \bs \{v\}} \big),
 \end{align*}
and choose a sequence of positive numbers $(\e_n, n \in \N)$ converging to zero, such that ${\big( \ttomu^{j,v}_{\e_n}, n \in \N \big)}$ converges weakly to a measure $\ttomu^{j,v}$.
When also extending the measures $\mu^v_\e$ to measures $\overbar{\mu}^v_\e$ on $\overbar{\cG \bs \{v\}}$, we obtain with equation~\eqref{eq:G_GC:mu relation}
 %\begin{align*}
 % \overbar{\mu}^v_\e = \ttomu^v_\e \circ (\overbar{\psi}^j)^{-1}
 %\end{align*}
 %with  $\overbar{\psi}^j \colon \overbar{\tcG^j \bs \{v\}} \rightarrow \overbar{\cG \bs \{v\}}$ being the the natural extension of , as
 \begin{align*}
 \forall A \in \sB \big( \overbar{\cG \bs \{v\}} \big): \quad 
  \overbar{\mu}^v_\e(A) 
  &  = \mu^v_\e \big(A \cap \big(\cG \bs \{v\}\big) \big) \\
  &  = \ttmu^{j,v}_\e \big( (\psi^j)^{-1} \big(A \cap \big(\cG \bs \{v\}\big) \big) \\
  &  = \ttmu^{j,v}_\e \big( (\overbar{\psi}^j)^{-1} (A) \cap \big( \tcG^j \bs \{v\} \big) \big) \\
  &  = \ttmu^{j,v}_\e \big( (\overbar{\psi}^j)^{-1} (A) \cap \big( \ttcG^j \bs \{v\} \big) \big) \\
  &  = \ttomu^{j,v}_\e \circ (\overbar{\psi}^j)^{-1} (A).
 \end{align*}
By the continuous mapping theorem, $(\overbar{\mu}^v_{\e_n}, n \in \N)$ converges weakly to the measure 
 \begin{align*}
  \overbar{\mu}^v = \ttomu^{j,v} \circ (\overbar{\psi}^j)^{-1}
 \end{align*}
on $\overbar{\cG \bs \{v\}}$.
We summarize all of our results up to this point:

\begin{lemma} \label{lem:G_GC:glueing process Feller data}
 Let $v \in \cV^j$, and $K^v_\e$, $\mu^v_\e$, $\overbar{\mu}^v$ and $\ttK^{j,v}_\e$, $\ttmu^{j,v}$, $\ttomu^{j,v}$ 
 be defined as in~\cite[Theorem~1.2]{WernerMetricA} for the Brownian motions $X$, $\ttX^j$ respectively.
 Then,
 \begin{enumerate}
  \item $K^v_\e = \ttK^{j,v}_\e$ for all $\e < \d$,
  \item $\mu^v_\e = \ttmu^{j,v}_\e \circ (\psi^j)^{-1}$ for all $\e < \d$,
  \item $(\overbar{\mu}^v_{\e_n}, n \in \N)$ converges weakly along the same sequence ${(\e_n, n \in \N)}$ of
         positive numbers for which $(\ttomu^{j,v}_{\e_n}, n \in \N)$ converges weakly to $\ttomu^{j,v}$,
         and the limit of $(\overbar{\mu}^v_{\e_n}, n \in \N)$ is
          \begin{align*}
           \overbar{\mu}^v = \ttomu^{j,v} \circ (\overbar{\psi}^j)^{-1}.
          \end{align*}
 \end{enumerate}
\end{lemma}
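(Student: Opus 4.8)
The plan is to reduce every quantity for $X$ at the vertex $v$ to the corresponding quantity for the prototype process $\ttX^j$, by tracing $X$ back first to $X^j = \psi^j(\tX^j)$, then to $\tX^j$, and finally to $\ttX^j$. Three structural facts, all recorded above, carry the argument: $\psi^j$ is an isometry fixing $v$; $\tX^j$ agrees with $\ttX^j$ up to the lifetime $\ttt^j$; and, started at $v \in \cV^j$, the glued process $X$ agrees with $X^j$ up to the first revival time $R^1 = \z^j = \ttt^j$. Since these are in hand, the proof amounts to assembling them in the right order; the one genuinely delicate point will be the passage to the compactifications needed for~(iii).

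For~(i) and~(ii) I would start from Lemma~\ref{lem:G_GC:Feller data, exit times} and Corollary~\ref{cor:G_GC:Feller data, exit distributions}, which already give, for $\e < \d$ and $\PV_v$-a.s., that $(\t_\e, X_{\t_\e}) = (\ttt^j_\e, \psi^j(\ttX^j_{\ttt^j_\e}))$ on $\{\ttt^j_\e < +\infty\}$ and $X_{\t_\e} = \D$ otherwise. Hence the joint laws agree and, as $\EV_v(\t_\e) = \EV^j_v(\ttt^j_\e)$, the scaled exit measures satisfy $\nu^v_\e(A) = \ttnu^{j,v}_\e\big((\psi^j)^{-1}(A)\big)$. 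Expanding $K^v_\e$ by its definition in \cite[Theorem~1.2]{WernerMetricA}, substituting this identity, and using that $\psi^j$ is a $v$-fixing isometry together with $\ttnu^{j,v}_\e(\ttcG^j \bs \tcG^j) = 0$ --- which is exactly assumption~\eqref{eq:G_GC:gluing, no jumps onto shadow parts}, forbidding $\ttX^j$ to leave a small ball around $v$ through $\tcG^j_s$ --- lets me rewrite the integral over $\tcG^j \bs \{v\}$ as one over $\ttcG^j \bs \{v\}$ and conclude $K^v_\e = \ttK^{j,v}_\e$. Renormalizing $\nu^v_\e$ by this common factor and applying the isometry once more gives $\mu^v_\e = \ttmu^{j,v}_\e \circ (\psi^j)^{-1}$, i.e.\ (ii) (this is equation~\eqref{eq:G_GC:mu relation}). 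The case where $v$ is a trap is vacuous, since then none of these exit quantities carry information.

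For~(iii) I would pass to the compactifications. Extend $\psi^j$ to a continuous map $\overbar{\psi}^j \colon \overbar{\tcG^j \bs \{v\}} \to \overbar{\cG \bs \{v\}}$, the adjoined boundary points being sent as in~\eqref{eq:G_GC:psi continuation to compact i}--\eqref{eq:G_GC:psi continuation to compact ii}; extend $\ttmu^{j,v}_\e$ to $\ttomu^{j,v}_\e$ on $\overbar{\ttcG^j \bs \{v\}}$ and $\mu^v_\e$ to $\overbar{\mu}^v_\e$ on $\overbar{\cG \bs \{v\}}$ exactly as in the proof of \cite[Theorem~1.2]{WernerMetricA}. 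Because $\ttomu^{j,v}_\e$ puts no mass on $\ttcG^j \bs \tcG^j$, restricting to $\tcG^j \bs \{v\}$ or to $\ttcG^j \bs \{v\}$ makes no difference, and (ii) lifts to $\overbar{\mu}^v_\e = \ttomu^{j,v}_\e \circ (\overbar{\psi}^j)^{-1}$. Choosing the sequence $(\e_n, n \in \N)$ along which $\ttomu^{j,v}_{\e_n}$ converges weakly to $\ttomu^{j,v}$ --- the one produced in \cite[Theorem~1.2]{WernerMetricA} applied to $\ttX^j$ --- and applying the continuous mapping theorem to $\overbar{\psi}^j$ shows that $\overbar{\mu}^v_{\e_n}$ converges weakly, along that same sequence, to $\ttomu^{j,v} \circ (\overbar{\psi}^j)^{-1}$, which is then the desired $\overbar{\mu}^v$.

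The main obstacle is the bookkeeping in~(iii): one has to verify that $\overbar{\psi}^j$ is genuinely continuous at every adjoined boundary point --- this is what~\eqref{eq:G_GC:psi continuation to compact i}--\eqref{eq:G_GC:psi continuation to compact ii} encode --- and that no mass is lost or created when extending $\mu^v_\e$, restricting among $\tcG^j \bs \{v\}$, $\ttcG^j \bs \{v\}$ and their closures, and pushing forward. Once the identity $\overbar{\mu}^v_\e = \ttomu^{j,v}_\e \circ (\overbar{\psi}^j)^{-1}$ is secured, the weak convergence is a routine continuous-mapping argument, and~(i)--(ii) follow directly from the isometry property of $\psi^j$ and assumption~\eqref{eq:G_GC:gluing, no jumps onto shadow parts}.
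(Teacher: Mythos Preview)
Your proposal is correct and follows essentially the same route as the paper: the paper's ``proof'' is precisely the discussion preceding the lemma (which is explicitly labeled a summary of the foregoing computations), and you reproduce each of its steps --- the reduction $X \to X^j \to \ttX^j$ via Lemma~\ref{lem:G_GC:Feller data, exit times} and Corollary~\ref{cor:G_GC:Feller data, exit distributions} together with $X_t = X^j_t$ for $t < R^1$, the $\nu$-identity, the $K$-identity from the isometry property and assumption~\eqref{eq:G_GC:gluing, no jumps onto shadow parts}, the $\mu$-identity~\eqref{eq:G_GC:mu relation}, the extension $\overbar{\psi}^j$ via~\eqref{eq:G_GC:psi continuation to compact i}--\eqref{eq:G_GC:psi continuation to compact ii}, and the continuous mapping theorem for~(iii).
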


We are now ready to compute the \FW\ data of the glued process $X$, thus completing the proof of Theorem~\ref{theo:G_GC:glueing process}:

\begin{proof}[Proof of Theorem~\ref{theo:G_GC:glueing process}]
 We have already proved in Lemma~\ref{lem:G_GC:glued process is BM} that $X$ is a Brownian motion on $\cG$.
 It remains to compute the \FW\ data of $X$ by employing Lemma~\ref{lem:G_GC:glueing process Feller data}.
 To this end, let $v \in \cV$ and choose $j \in \{-1, +1\}$ such that~$v \in \cV^j$.
 
 The killing parameters are given by
  \begin{align*}
   c^{v,\D}_1 & = \lim_{n \rightarrow \infty} \frac{\PV_v(X_{\t_{\e_n}} = \D)}{\EV_v(\t_{\e_n}) K^v_{e_n}} 
               = \lim_{n \rightarrow \infty} \frac{\PV^j_v(\ttX^j_{\ttt^j_{\e_n}} = \D)}{\EV^j_v(\ttt^j_{\e_n}) \ttK^{j,v}_{e_n}} 
               = p^{v,\D}_1, \\
   c^{v,\infty}_1 & = \sum_{e \in \cE} \overbar{\mu}^v \big( \{ (e, +\infty) \} \big) 
                   = \sum_{e \in \cE^j \cup \cE^j_s} \ttomu^{j,v} \big( \{ (e, +\infty) \} \big) 
                   = p^{v,\infty}_1,           
  \end{align*}
 and thus vanish, as $p^v_1 = p^{v,\D}_1 + p^{v,\infty}_1 = 0$ holds by assumption.
 
 The reflection parameters are defined as
   \begin{align*}
    c^{v,l}_2 
    & = 
    \begin{cases}
     \overbar{\mu}^v \big( \{ (l, 0+) \} \big), & l \in \cE(v), \\
     \overbar{\mu}^v \big( \{ (l, 0+) \} \big), & l \in \cI(v), v = \cLV_-(l), \\
     \overbar{\mu}^v \big( \{ (l, \cR_l-) \} \big), & l \in \cI(v), v = \cLV_+(l).
    \end{cases} 
  \end{align*}
  For $e \in \cE(v)$, the relation $(\overbar{\psi}^j)^{-1} \big( (e,0+) \big) = (e,0+)$ immediately yields $c^{v,e}_2 = p^{v,e}_2$.
  For $i \in \cI(v)$, we need to distinguish some cases, using equations~\eqref{eq:G_GC:psi continuation to compact i} and~\eqref{eq:G_GC:psi continuation to compact ii}:
  For $i \in \cI(v)$ with $v = \cLV_-(i)$, that is if $i \in \cI^j(v) \cup \cI^j_s(v)$, we have 
  \begin{align*}
    c^{v,i}_2 
      = \overbar{\mu}^v \big( \{ (i, 0+) \} \big)
      = 
       \begin{cases}
        \ttomu^{j,v} \big( \{ (i, 0+) \} \big) = p^{v,i}_2,     & i \in \cI^j(v), \\
        \ttomu^{j,v} \big( \{ (e^j_i, 0+) \} \big) = p^{v,e^j_i}_2, & i \in \cI^j_s(v), 
       \end{cases} 
  \end{align*}
  while for $i \in \cI(v)$ with $v = \cLV_+(i)$, that is if $i \in \cI^{-j}(v) \cup \cI^{-j}_s(v)$, we have
  \begin{align*}
    c^{v,i}_2 
      = \overbar{\mu}^v \big( \{ (i, \cR_i-) \} \big)
      = 
       \begin{cases}
        \ttomu^{j,v} \big( \{ (i, \cR_i-) \} \big) = p^{v,i}_2,  & i \in \cI^{-j}(v), \\
        \ttomu^{j,v} \big( \{ (e^j_i, 0+) \} \big) = p^{v,e^j_i}_2,   & i \in \cI^{-j}_s(v).
       \end{cases} 
  \end{align*}
  
  The diffusion parameter is given by
   \begin{align*}
    c^v_3 
    = \lim_{n \rightarrow \infty} \frac{1}{K^v_{e_n}} 
    = \lim_{n \rightarrow \infty} \frac{1}{\ttK^{j,v}_{e_n}} 
    = p^v_3.
   \end{align*}

  For all $A \in \sB(\cG \bs \{v\})$, the jump distribution is computed by
    \begin{align*}
       c^v_4(A)
       & = \int_A \frac{1}{1-e^{-d(v,g)}} \overbar{\mu}^v(dg) \\
       & = \int_{(\psi^j)^{-1}(A)} \frac{1}{1-e^{-d(v,\psi^j(g))}} \ttomu^{j,v}(dg) \\
       & = p^v_4 \circ (\psi^j)^{-1}(A),
    \end{align*}
  as $\overbar{\psi^j}$ is an extension from $\psi^j \colon \tcG^j \rightarrow \cG$ and an isometry.        
\end{proof}

\section{Completing the Construction} \label{sec:G_GC:completing}

We are ready to carry out the construction that was laid out in section~\ref{sec:G_GC:agenda}.

\begin{theorem} \label{theo:G_GC:complete glueing, infinite lifetime}
 Let $\cG = (\cV, \cE, \cI, \cLV, \cR)$ be a metric graph, and
 for every $v \in \cV$ let constants
 $p^{v,l}_2 \geq 0$ for each $l \in \cL(v)$, $p^v_3 \geq 0$ and a measure $p^v_4$ on $\cG \bs \{v\}$ be given, satisfying
  \begin{align*}
    \sum_{l \in \cL(v)} p^{v,l}_2 + p^v_3 + \int_{\cG \bs \{v\}} \big( 1 - e^{-d(v,g)} \big) \, p^v_4 (dg) = 1,
  \end{align*}  
  and
  \begin{align*}
    p^v_4 \big( \cG \bs \{v\} \big) = +\infty, \quad \text{if} \quad \sum_{l \in \cL(v)} p^{v,l}_2 + p^v_3 = 0,
  \end{align*}
  as well as $p^v_4 \big( \comp \overline{\BB_\d(v)} \big) = 0$ for some $\d \in ( 0, \min_{l \in \cL} \cR_l )$.
  Then there exists a Brownian motion~$X$ on $\cG$ which has infinite lifetime, is continuous inside all edges,
  satisfies ${X_{\t_\e} \in \BB_\d(v)}$ $\PV_v$-a.s.\ for all $\e < \d$, $v \in \cV$, and admits the \FW\ data
   \begin{align*}
    \big( 0, (p^{v,l}_2)_{l \in \cL(v)}, p^v_3, p^v_4 \big)_{v \in \cV}.
   \end{align*}  
\end{theorem}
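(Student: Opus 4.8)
The plan is to argue by induction on the number $|\cV|$ of vertices, using Theorem~\ref{theo:G_GC:glueing process} as the engine of the inductive step and the star-graph construction of \cite{WernerStar} as the base case; this realizes concretely the iterated decomposition sketched in Section~\ref{sec:G_GC:agenda}. If $|\cV|=1$ then $\cG$ is a star graph (loops being excluded), and \cite[Theorem~4.3]{WernerStar} applied to the data $\big((p^{v,l}_2)_l,p^v_3,p^v_4\big)$ gives a Brownian motion on $\cG$ with infinite lifetime, continuous inside every edge, with \FW\ data $\big(0,0,(p^{v,l}_2)_l,p^v_3,p^v_4\big)$; the remaining property $X_{\t_\e}\in\BB_\d(v)$ for $\e<\d$ holds because, as $\d<\min_{l\in\cL}\cR_l$, the ball $\BB_\e(v)$ contains no vertex but $v$, so $X$ can leave it only by a continuous crossing of the sphere of radius $\e$ or by a jump out of $v$, which lands in $\supp p^v_4\subseteq\overline{\BB_\d(v)}$.

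Assume now $|\cV|\ge 2$ and that the theorem holds for graphs with fewer vertices. Fix $v_0\in\cV$ and perform the decomposition of Section~\ref{sec:G_GC:glueing} with $\cV^{-1}:=\{v_0\}$ and $\cV^{+1}:=\cV\bs\{v_0\}$, so that $\ttcG^{-1}$ is a star graph centred at $v_0$ and $\ttcG^{+1}$ has $|\cV|-1$ vertices, both carrying the shadow edges $e^j_i$, $i\in\cI_s$. On $\ttcG^j$ I prescribe the reflection weights $(p^{v,l}_2)_{l\in\ttcL^j(v)}$ with $p^{v,e^j_i}_2:=p^{v,i}_2$, the diffusion weights $p^v_3$, and the pushed-forward measures $\varphi^j_\ast p^v_4=p^v_4\circ\psi^j$, which are well defined since $\supp p^v_4\subseteq\overline{\BB_\d(v)}\subseteq\cG^j$ (using $\d<\min_{l\in\cL}\cR_l$). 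As $\psi^j$ is an isometry on balls around each $v\in\cV^j$ and the shadow edges have length $\cR(i)\ge\min_{l\in\cL}\cR_l>\d$, these data again satisfy the normalization, the infinite-measure dichotomy, and $\varphi^j_\ast p^v_4\big(\comp\overline{\BB_\d(v)}\big)=0$ with the same $\d$. Hence \cite[Theorem~4.3]{WernerStar} yields $\ttX^{-1}$ and the induction hypothesis yields $\ttX^{+1}$: Brownian motions on $\ttcG^j$ with infinite lifetime, continuous inside every edge, with \FW\ data $\big(0,0,(p^{v,l}_2)_{l\in\ttcL^j(v)},p^v_3,\varphi^j_\ast p^v_4\big)_{v\in\cV^j}$, and with $\ttX^j_{\ttt^j_\e}\in\BB_\d(v)$ a.s.\ for $\e<\d$.

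Next I would verify the standing hypothesis~\eqref{eq:G_GC:gluing, no jumps onto shadow parts}, i.e.\ $\PV^j_v\big(\ttX^j_{\ttt^j_\e}\in\tcG^j_s\big)=0$ for $v\in\cV^j$ and $\e<\min\{\cR_i:i\in\cI_s\}$. Since $e^j_i$ is external with $\cR_s(e^j_i)=\cR(i)\ge\min\{\cR_k:k\in\cI_s\}$, the excrescent set $\tcG^j_s$ lies at distance $\ge\min\{\cR_k:k\in\cI_s\}$ from every vertex of $\ttcG^j$; on the other hand, for $\e$ below this value $\ttX^j$ (being normal, continuous inside edges, with jump measure at each vertex $w$ supported in $\overline{\BB_\d(w)}$ and $\d<\min\{\cR_k:k\in\cI_s\}$) can leave $\BB_\e(v)$ only by a continuous crossing at radius $\e$ or by a jump out of some vertex $w\in\BB_\e(v)$ into $\overline{\BB_\d(w)}$, neither of which reaches $\tcG^j_s$. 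Therefore Theorem~\ref{theo:G_GC:glueing process} applies and produces a Brownian motion $X$ on $\cG$, continuous inside all edges, whose \FW\ data at $v\in\cV^j$ is $\big(0,(p^{v,l}_2)_{l\in\cL(v)},p^v_3,(\varphi^j_\ast p^v_4)\circ(\psi^j)^{-1}\big)$; since $(\varphi^j_\ast p^v_4)\circ(\psi^j)^{-1}=p^v_4$ and $p^{v,e^j_i}_2=p^{v,i}_2$, this is exactly the asserted $\big(0,(p^{v,l}_2)_{l\in\cL(v)},p^v_3,p^v_4\big)$. The property $X_{\t_\e}\in\BB_\d(v)$ for $\e<\d$ transfers from the $\ttX^j$: by Lemma~\ref{lem:G_GC:Feller data, exit times} and Corollary~\ref{cor:G_GC:Feller data, exit distributions} one has $\t_\e=\ttt^j_\e$ and $X_{\t_\e}=\psi^j(\ttX^j_{\ttt^j_\e})$ for $v\in\cV^j$, and $\psi^j$ maps the $\d$-ball around $v$ isometrically onto the $\d$-ball around $v$ in $\cG$.

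The remaining point, which I expect to demand the most care, is that $X$ has infinite lifetime, something Theorem~\ref{theo:G_GC:glueing process} does not record. By the alternating-copies construction the lifetime of $X$ equals $\lim_n R^n$, where $R^n$ is the $n$-th revival time, and a revival occurs precisely when the current copy of $X^{\mp 1}$ dies, i.e.\ when the underlying copy of $\ttX^{\mp 1}$ first reaches the far end of some shadow edge $e^{\mp 1}_i$ --- equivalently, when $X$ has just traversed the mutual internal edge $i\in\cI_s$ from one endpoint to the other. Were $\lim_n R^n=:\z$ finite, the path of $X$ on $[0,\z)$, which is continuous off the vertices and behaves like one-dimensional Brownian motion on each edge, would have to cross some fixed mutual edge from end to end infinitely often within the bounded window $[0,\z)$; but one-dimensional Brownian motion crosses a non-degenerate interval only finitely often in any bounded time interval (Doob's upcrossing inequality on $[0,\z]$), and $\cI_s$ is finite, a contradiction. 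Hence $\z=+\infty$, which completes the induction and the proof. (Should a conservativeness criterion for Brownian motions on $\cG$ with vanishing killing parameters be available, it could replace this last argument.)
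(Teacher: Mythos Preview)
Your proof follows the same inductive scheme as the paper's---star-graph base case via \cite{WernerStar}, then gluing via Theorem~\ref{theo:G_GC:glueing process}---with only the roles of $\cV^{-1}$ and $\cV^{+1}$ swapped. You are in fact more thorough than the paper, which simply writes ``Theorem~\ref{theo:G_GC:glueing process} concludes the proof'' without explicitly checking hypothesis~\eqref{eq:G_GC:gluing, no jumps onto shadow parts}, the exit property $X_{\t_\e}\in\BB_\d(v)$, or the infinite-lifetime claim (the latter is not part of the statement of Theorem~\ref{theo:G_GC:glueing process}).

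One small caveat on your lifetime argument: invoking Doob's upcrossing inequality is not quite apt, because the restriction of $X$ to a mutual edge $i$ is not a single Brownian path but a concatenation of separate excursions, possibly infinitely many. A clean fix: inside each end-to-end crossing of $i$ the coordinate must hit $\cR_i/2$; applying the strong Markov property there yields, across successive crossings, i.i.d.\ copies of the first exit time of $[0,\cR_i]$ by a Brownian motion started at $\cR_i/2$, and the sum of these diverges a.s., contradicting $\z<\infty$.
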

\begin{proof}
 We proceed via an induction over the count $n := \abs{\cV}$ of vertices.
 If $n = 1$, then $\cG$ is a star graph, so the construction
 given in~\cite{WernerStar} together with \cite[Theorem~4.33]{WernerStar}, \cite[Lemma~4.1]{WernerMetricA}
 and \cite[Theorem~4.3]{WernerStar}
 yield the result. 
 
 Assume now that such Brownian motions exist for all metric graphs with less than $n$ vertices. 
 Let $\cG$ be a metric graph with $n$ vertices $\cV = \{ v_1, \ldots, v_n \}$ and boundary data as given in the theorem.
 We decompose the graph into~$\ttcG^{-1}$ and~$\ttcG^{+1}$, as done in section~\ref{sec:G_GC:glueing}, for $\cV^{-1} = \{ v_1, \ldots, v_{n-1} \}$ and $\cV^{+1} = \{ v_n \}$. 
 Then the conditions of the theorem are satisfied for these graphs $\ttcG^{-1}$, $\ttcG^{+1}$ with $n-1$ vertices, one vertex respectively,
 and corresponding boundary data $(p^{v,l}_2 \geq 0, l \in \ttcL^j(v))$, $p^v_3 \geq 0$ and $p^v_4 \circ \psi^j$ (as $\psi^j$ is an isometry, this data satisfies the normalization requirements).
 Therefore, there exist
 Brownian motions $\ttX^{j}$ on $\ttcG^{j}$ with infinite lifetime which are continuous inside all edges, 
 satisfy $\ttX^j_{\ttt^j_\e} \in \BB_\d(v)$ $\PV^j_v$-a.s.\ for all $v \in \cV^j$ and admit the \FW\ data 
  \begin{align*}
   \big( 0, (p^{v,l}_2)_{l \in \ttcL^j(v)}, p^v_3, p^v_4 \circ \psi^j \big)_{v \in \cV^{j}}
  \end{align*}
 with $p^{v, e^j_i} := p_2^{v,i}$ for $i \in \cI_s(v)$, $v \in \cV^j$.
 We then follow the construction of section~\ref{sec:G_GC:glueing} in order to glue $\ttX^{-1}$ and $\ttX^{+1}$ 
 together, and Theorem~\ref{theo:G_GC:glueing process} concludes the proof.
\end{proof} 

In order to implement the killing parameter and the non-local jumps, we first need to adjoin the ``fake cemeteries'' $\sq^v$ for all $v \in \cV$:

\begin{theorem}  \label{theo:G_GC:complete glueing, cemetries}
 Let $\cG = (\cV, \cE, \cI, \cLV, \cR)$ be a metric graph, and
 for every $v \in \cV$ let constants
 $p^v_1 \geq 0$, $p^{v,l}_2 \geq 0$ for each $l \in \cL(v)$, $p^v_3 \geq 0$ and a measure $p^v_4$ on $\cG \bs \{v\}$ be given with
  \begin{align*}
    p^v_1 + \sum_{l \in \cL(v)} p^{v,l}_2 + p^v_3 + \int_{\cG \bs \{v\}} \big( 1 - e^{-d(v,g)} \big) \, p^v_4 (dg) = 1,
  \end{align*}  
  and
  \begin{align*}
    p^v_4 \big( \cG \bs \{v\} \big) = +\infty, \quad \text{if} \quad \sum_{l \in \cL(v)} p^{v,l}_2 + p^v_3 = 0,
  \end{align*}
  as well as $p^v_4 \big( \comp \overline{\BB_\d(v)} \big) = 0$ for some $\d \in \big( 0, \min_{l \in \cL} \cR_l \big)$.
  Then there exists a Brownian motion $X$ on $\cG \cup \{ \sq^v, v \in \cV \}$ with $\{ \sq^v, v \in \cV \}$ being an isolated, absorbing set for $X$,
  such that $X$ has infinite lifetime, is continuous inside all edges, satisfies
  $X_{\t_\e} \in \BB_\d(v) \cup \{\sq^v\}$ $\PV_v$-a.s.\ for all $\e < \d$, $v \in \cV$, and has the \FW\ data
   \begin{align*}
    \big( 0, (p^{v,l}_2)_{l \in \cL(v)}, p^v_3, p^v_4 + p^v_1 \, \e_{\sq^v} \big)_{v \in \cV}.
   \end{align*}  
\end{theorem}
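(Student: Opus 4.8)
The plan is to construct $X$ by induction on $n := \abs{\cV}$, mirroring the induction of Theorem~\ref{theo:G_GC:complete glueing, infinite lifetime} but carrying the adjoined absorbing points $\sq^v$ along from the start. The key point is that $\sq^v$ has infinite distance from every other point of the state space, so that the jump weight $p^v_1\,\e_{\sq^v}$ enters the \FW\ normalization exactly as the original killing weight $p^v_1$ did; in particular $\sq^v$ is never an exit destination from a small neighborhood of $v$, and the jump to $\sq^v$ can only be produced by the instant revival of Lemma~\ref{lem:G_GC:reviving BB with revival kernel}. Throughout the induction I would maintain, besides the asserted properties, the refined exit property $X_{\t_\e} \in \BB_\d(v) \cup \{\sq^v\}$ $\PV_v$-a.s.\ for all $\e < \d$ and $v \in \cV$, which is exactly the input needed by the revival step and by the gluing construction of Section~\ref{sec:G_GC:glueing}.

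For $n = 1$ the graph $\cG$ is a star graph. I would take the Brownian motion on $\cG$ with \FW\ data $\big( p^v_1, (p^{v,l}_2)_{l\in\cL(v)}, p^v_3, p^v_4 \big)$ provided by~\cite{WernerStar}, view it on the enlarged space $\cG \cup \{\sq^v\}$ with $\sq^v$ absorbing and initially unvisited, and apply Lemma~\ref{lem:G_GC:reviving BB with revival kernel} with revival kernel $\kappa^v := \e_{\sq^v}$: hypothesis~\ref{itm:G_GC:reviving BB with revival kernel, i} holds since $\sq^v \notin \BB_\d(v)$, hypothesis~\ref{itm:G_GC:reviving BB with revival kernel, ii} holds since $p^v_4$ is carried by $\overline{\BB_\d(v)}$ and hence the exit distributions of the star motion from $\overline{\BB_\e(v)}$, $\e < \d$, sit inside $\BB_\d(v)$, and $d(v,\sq^v) = +\infty$. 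Lemma~\ref{lem:G_GC:reviving BB with revival kernel} then delivers a Brownian motion on $\cG \cup \{\sq^v\}$ with \FW\ data $\big( 0, (p^{v,l}_2), p^v_3, p^v_4 + p^v_1\,\e_{\sq^v} \big)$; it has infinite lifetime because the only way the star motion could die, the $p^v_1$-killing at $v$, has been turned into the jump to $\sq^v$, while $c^{v,\infty}_1 = 0$ by Remark~\ref{rem:G_GC:reviving BB with revival kernel} and $p^v_4$ charges no coffin mass, and continuity inside edges together with the refined exit property are immediate.

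For $n \geq 2$, decompose $\cV = \cV^{-1} \uplus \cV^{+1}$ with $\cV^{+1} = \{v_n\}$ and form $\ttcG^{-1}$, $\ttcG^{+1}$ with their shadow edges as in Section~\ref{sec:G_GC:glueing}. Pushing the data along the isometries $\psi^j$ of~\eqref{eq:G_GC:def psi} and setting $p^{v,e^j_i}_2 := p^{v,i}_2$ on the shadow edges preserves the normalization, the infinite-measure clause and the support restriction, so the induction hypothesis (for $\ttcG^{-1}$) and the base case (for $\ttcG^{+1}$) supply Brownian motions $\ttX^{j}$ on $\ttcG^{j} \cup \{\sq^v, v\in\cV^j\}$ with \FW\ data $\big( 0, 0, (p^{v,l}_2)_{l\in\ttcL^j(v)}, p^v_3, (p^v_4\circ\psi^j) + p^v_1\,\e_{\sq^v} \big)$, infinite lifetime, continuity inside edges, and $\ttX^j_{\ttt_\e^j} \in \BB_\d(v) \cup \{\sq^v\}$ for $\e < \d$. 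These meet all requirements imposed on the prototype processes in Section~\ref{sec:G_GC:glueing}; in particular~\eqref{eq:G_GC:gluing, no jumps onto shadow parts} holds because the exit distributions of $\ttX^j$ near $v$ are carried by $\BB_\d(v)\cup\{\sq^v\}$, which is disjoint from $\tcG_s^j$ (the latter lies at distance $\cR_s(e^j_i) = \cR(i) > \d$ from $v$, and $\sq^v$ is at infinite distance). Running the gluing construction of Section~\ref{sec:G_GC:glueing} on $\ttX^{-1}$, $\ttX^{+1}$ — with the isolated absorbing points $\sq^v$ merely carried along, as the killing on $\tcG_s^j$, the reorientation, the state-space map $\psi^j$ and the alternating-copies pasting never involve them — and invoking Theorem~\ref{theo:G_GC:glueing process} yields a Brownian motion $X$ on $\cG \cup \{\sq^v, v\in\cV\}$ with $c^v_1 = 0$, $c^v_3 = p^v_3$, $c^v_4 = \big( (p^v_4\circ\psi^j) + p^v_1\,\e_{\sq^v} \big)\circ(\psi^j)^{-1} = p^v_4 + p^v_1\,\e_{\sq^v}$, and $c^{v,l}_2 = p^{v,l}_2$ for every $l\in\cL(v)$ (using $p^{v,e^j_i}_2 = p^{v,i}_2$ on the shadow edges); infinite lifetime, continuity inside edges, the isolated‑absorbing property of $\{\sq^v\}$, and the refined exit property are inherited through the construction, which closes the induction.

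The main obstacle I anticipate is the bookkeeping of the adjoined absorbing points through the transformations of Section~\ref{sec:G_GC:glueing}: one must check that Theorem~\ref{theo:G_GC:glueing process} and the lemmas leading to it remain valid verbatim in the presence of an isolated absorbing set (the shadow-edge surgery, the killing at $\tcG_s^j$ and the alternating-copies concatenation are all insensitive to it, but this must be stated explicitly), and that hypotheses~\ref{itm:G_GC:reviving BB with revival kernel, i}, \ref{itm:G_GC:reviving BB with revival kernel, ii} of Lemma~\ref{lem:G_GC:reviving BB with revival kernel} are genuinely met at each stage — which is precisely why the refined exit property, the support restriction on $p^v_4$ and the infinite distance of $\sq^v$ have to be propagated along the whole induction.
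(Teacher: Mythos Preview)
Your proposal is correct and follows essentially the same route as the paper: induction on $\abs{\cV}$, base case handled by taking the star-graph Brownian motion with killing weight $p^v_1$ and reviving it at the adjoined absorbing point $\sq^v$ (the paper phrases this as concatenation with the constant process on $\{\sq^v\}$ and then invokes a computation ``along the lines of'' Lemma~\ref{lem:G_GC:reviving BB with revival kernel}, which is exactly your direct application of that lemma), and the inductive step carried out via the decomposition $\cV^{+1}=\{v_n\}$ and Theorem~\ref{theo:G_GC:glueing process}. Your explicit tracking of the refined exit property and the verification of hypotheses~\ref{itm:G_GC:reviving BB with revival kernel, i},~\ref{itm:G_GC:reviving BB with revival kernel, ii} are details the paper leaves implicit, and your closing caveat about the adjoined points being inert under the transformations of Section~\ref{sec:G_GC:glueing} is precisely the bookkeeping the paper takes for granted.
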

\begin{proof}
 This proof proceeds analogously to the proof of Theorem~\ref{theo:G_GC:complete glueing, infinite lifetime}, except that we need to adjoin the
 isolated points $\sq^v$, $v \in \cV$, to the partial processes and revive these processes there before gluing the partial graphs together.

 If $\abs{\cV} = 1$, then $\cG$ is a star graph, and the construction of~\cite{WernerStar} (again with \cite[Theorem~4.33]{WernerStar},
 \cite[Lemma~4.1]{WernerMetricA}, 
 and \cite[Theorem~4.3]{WernerStar}) gives a Brownian motion on~$\cG$ with the needed properties and \FW\ data   
   \begin{align*}
    \big( p^v_1, (p^{v,l}_2)_{l \in \cL(v)}, p^v_3, p^v_4 \big).
   \end{align*}
 By concatenating it with the constant process on $\{\sq^v\}$ with the technique of~\cite{WernerConcat},
 we revive this Brownian motion on a new, isolated, absorbing point $\sq^v$.
 Then a computation along the lines of Lemma~\ref{lem:G_GC:reviving BB with revival kernel} yields that the revived process is a Brownian 
 motion on $\cG \cup \{ \sq^v \}$ with its \FW\ data at $v$ being given by
   \begin{align*}
    \big( 0, (p^{v,l}_2)_{l \in \cL(v)}, p^v_3, p^v_4 + p^v_1 \, \e_{\sq^v} \big).
   \end{align*} 
   
 Now let $\cV = \{ v_1, \ldots, v_n \}$, and assume that the assertion of the theorem holds for any graph with less than $n$ vertices.
 We decompose the graph $\cG$ into $\ttcG^{-1}$ and $\ttcG^{+1}$, as done in section~\ref{sec:G_GC:glueing}, for $\cV^{-1} = \{ v_1, \ldots, v_{n-1} \}$ and $\cV^{+1} = \{ v_n \}$.
 By assumption, there exist Brownian motions $\ttX^j$ on $\ttcG^j \cup \big\{ \sq^v, v \in \cV^j \big\}$ with the needed path properties and \FW\ data 
  \begin{align*}
   \big( 0, (p^{v,l}_2)_{l \in \ttcL^j(v)}, p^v_3, p^v_4 \circ \psi^j + p^v_1 \, \e_{\sq^v} \big)_{v \in \cV^{j}}
  \end{align*} 
 with $p^{v, e^j_i} := p_2^{v,i}$ for $i \in \cI_s(v)$, $v \in \cV^j$.
 We then again follow the construction of section~\ref{sec:G_GC:glueing} to glue $\ttX^{-1}$ and $\ttX^{+1}$ together,
 and Theorem~\ref{theo:G_GC:glueing process} yields the result.
\end{proof} 

In order to complete the proof of the existence theorem for Brownian motions on metric graphs with non-local boundary conditions,
it remains to implement the ``global'' jumps:

\begin{proof}[Proof of Theorem~\ref{theo:G_GC:complete construction}]
 Let $\d > 0$ with $\d < \min_{l \in \cL} \cR_l$, and define for every $v \in \cV$
  \begin{align*}
   q^v_1 := p^v_1 + p_4^v \big( \comp \BB_v(\d) \big), \quad q^v_4 := \restr{p_4^v}{\BB_v(\d)}.
  \end{align*}
 The introduction of the normalizing factor
  \begin{align*}
   c_0^v := \Big( q^v_1 + \sum_{l \in \cL(v)} p^{v,l}_2 + p^v_3 + \int_{\cG \bs \{v\}} \big( 1 - e^{-d(v,g)} \big) \, q^v_4 (dg) \Big)^{-1}
  \end{align*}
 enables us to employ Theorem~\ref{theo:G_GC:complete glueing, cemetries} in order to construct a Brownian motion~$X^1$ on~$\cG \cup \{ \sq^v, v \in \cV \}$
 which has infinite lifetime, is continuous inside all edges,
 satisfies ${X_{\t_\e} \in \BB_\d(v) \cup \{\sq^v\}}$ $\PV_v$-a.s.\ for all $\e < \d$, $v \in \cV$, and has the \FW\ data
   \begin{align*}
     \big( c_0^v \, \big( 0, (p^{v,l}_2)_{l \in \cL(v)}, p^v_3, q^v_4 + q^v_1 \, \e_{\sq^v} \big) \big)_{v \in \cV}.
   \end{align*}  
   
 As $X^1$ has infinite lifetime, we can use an application of the concatenation techniques (see~\cite{WernerConcat} and~\cite[Proposition~14.20]{Sharpe88})
 to adjoin a new, isolated, absorbing point $\sq$ to $X^1$,
 resulting in a Brownian motion $X^2$ on the metric graph $\cG \cup \{ \sq^v, v \in \cV \} \cup \{ \sq \}$ with the same \FW\ data as $X^1$ for all $v \in \cV$, 
 and additional \FW\ data $(0,0,1,0)$ at the new vertex $\sq$.
 
 Let $X^3$ be the right process on $\cG \cup \{ \sq \}$ which results from killing $X^2$ on the absorbing set $\{ \sq^v, v \in \cV \}$ 
 (see Appendix~\ref{app:C_MS:killing on absorbing set}).
 As $X^3$ is strongly Markovian and $X^3_t = X^2_t$ for all $t \leq \h_\cV$, $X^3$ is a Brownian motion on $\cG \cup \{ \sq \}$, and Lemma~\ref{lem:G_GC:killing on absorbing set, Feller data} 
 asserts that the \FW\ data of $X^3$ reads 
   \begin{align*}
     \big( c_0^v \, \big( q^v_1, (p^{v,l}_2)_{l \in \cL(v)}, p^v_3, q^v_4 \big) \big)_{v \in \cV}.
   \end{align*}  
   
 Now construct $X^4$ as the revived process obtained from $X^3$ by the identical copies method with revival distributions 
  \begin{align*}
   \kappa^v := (q^v_1)^{-1} \, \big( p^v_1 \, \e_{\sq} + \restr{p_4^v}{\comp \BB_\d(v)} \big), \quad v \in \cV.
  \end{align*}
%   \begin{align*}
%     \kappa^0 \big( (v,x), \, \cdot \, \big) 
%     := \frac{p^v_1 \, \e_\sq + p^v_4 \big( \cdot \, \cap \BB_{\d^v}(v) \big) }
%             {p^v_1 + p^v_4 \big( \BB_{\d^v}(v)^\comp \big)}.
%   \end{align*}
 Then by Lemma~\ref{lem:G_GC:reviving BB with revival kernel}, $X^4$ is a Brownian motion on $\cG \cup \{ \sq \}$, and its generator satisfies
   \begin{align*}
     \sD(A)  \subseteq
       \Big\{ & f \in \cC^2_0(\cG \cup \{ \sq \}): \forall v \in \cV: \\
          & - \sum_{l \in \cL(v)} c_0^v \, p^{v,l}_2 \, f_l'(v) + \frac{c_0^v \, p^v_3}{2} f''(v) \\
          & - \int_{(\cG \bs \{v\}) \cup \{ \sq \}} \big( f(g) - f(v) \big) \, c_0^v \big( \restr{p_4^v}{\BB_\d(v)} + p^v_1 \, \e_\sq + \restr{p_4^v}{\BB_\d(v)^\comp} \big) (dg) = 0 \Big\} \\
     = \Big\{ & f \in \cC^2_0(\cG \cup \{ \sq \}): \forall v \in \cV: \\
          & - \sum_{l \in \cL(v)} p^{v,l}_2 \, f_l'(v) + \frac{p^v_3}{2} f''(v)  \\
          & - \int_{(\cG \bs \{v\}) \cup \{ \sq \}} \big( f(g) - f(v) \big) \,  \big( p_4^v + p^v_1 \, \e_\sq \big) (dg) = 0 \Big\}.
   \end{align*}
   
 Finally, employ once more the transformation of Appendix~\ref{app:C_MS:killing on absorbing set} in order to kill~$X^4$ on the isolated, absorbing set~$\{ \sq \}$ 
 and obtain the Brownian motion~$X^5$ on~$\cG$. Lemma~\ref{lem:G_GC:killing on absorbing set, generator data} asserts that the domain of its generator satisfies 
    \begin{align*}
     \sD(A) \subseteq
       \Big\{ & f \in \cC^2_0(\cG): \forall v \in \cV: \\
          & p^v_1 f(v) - \sum_{l \in \cL(v)} p^{v,l}_2 \, f_l'(v) + \frac{p^v_3}{2} f''(v) - \int_{\cG \bs \{v\}} \big( f(g) - f(v) \big) \, p^v_4 (dg) = 0  \Big\}.
   \end{align*}
\end{proof}

\begin{appendix}
 
\section{Killing on an Absorbing Set} \label{app:C_MS:killing on absorbing set}

We present an easy technique to kill a right process on an absorbing set (see \cite[Definition~12.27]{Sharpe88}), 
which will be used in the main construction of this article. 
For the role of the cemetery point~$\D$ and the lifetime conventions in the context of right processes, the reader may consult~\cite[Section~11]{Sharpe88}.

Let $\tE = E_\D \uplus F$ be the topological union of two disjoint Radon spaces $E_\D$ and~$F$, and consider a right process $X$ on $\tE$, with $F$ being an absorbing set for $X$.
We kill the process~$X$ on this absorbing set~$F$ by mapping $F$ to $\D$ with
 \begin{align*}
  \psi \colon \tE \rightarrow E_\D, ~ x \mapsto \psi(x) :=
    \begin{cases}
     x,  & x \in E_\D, \\
     \D, & x \in F.
    \end{cases}
 \end{align*}

By checking the consistency conditions for state space transformations of right processes (see \cite[Section~13]{Sharpe88} and \cite[Section~3.1]{WernerConcat}), we show:
 
\begin{theorem} \label{theo:C_MS:killing on absorbing set}
 $\psi(X)$ is a right process on $E_\D$.
\end{theorem}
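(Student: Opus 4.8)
The plan is to verify the consistency conditions for state space transformations of right processes from \cite[Section~13]{Sharpe88}, applied to the map $\psi$. Concretely, one must check that $\psi$ is a measurable map from $\tE$ onto $E_\D$, that $t \mapsto \psi(X_t)$ is a.s. right continuous, that the image process inherits the normal/Markov structure, and — the one substantive point — that the natural filtration generated by $\psi(X)$ together with the image of the transition semigroup produces a semigroup on $E_\D$ satisfying the right-process axioms. The key structural input is that $F$ is absorbing for $X$: once $X$ enters $F$, it stays there forever, so the map $\psi$ collapses exactly an "already-dead" portion of the state space onto the cemetery, which is precisely the behavior $\D$ is supposed to have.

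First I would observe that $\psi$ is continuous (hence Borel measurable): $E_\D$ is clopen in $\tE$ and $F$ is clopen in $\tE$, so $\psi$ restricted to $E_\D$ is the identity and the preimage of any open set in $E_\D$ is open in $\tE$; the cemetery point $\D$ is isolated in $E_\D$, and $\psi^{-1}(\{\D\}) = F$ is open in $\tE$. Next, since $t \mapsto X_t$ is right continuous in $\tE$ and $\psi$ is continuous, $t \mapsto \psi(X_t)$ is right continuous in $E_\D$. Then I would check normality: under $\PV_x$ with $x \in E_\D$ we have $X_0 = x$ a.s., so $\psi(X_0) = x$; under $\PV_x$ with $x \in F$ we have $\psi(X_0) = \D$, which is consistent with starting the image process at $\D$. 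For the Markov property and the semigroup: define $\hPV_x := \PV_x \circ \psi^{-1}$ (identifying paths) for $x \in E_\D$, set $Y := \psi(X)$, and let $(\sG^Y_t)$ be the completed natural filtration of $Y$. Because $F$ is absorbing, for $x \in F$ the process $X$ never leaves $F$, so $Y \equiv \D$, $\PV_x$-a.s.; this means the "lost" part of the path space carries no new information and the lifetime of $Y$ is the first entry time $\h_F = \inf\{t : X_t \in F\}$ (intersected with the original lifetime of $X$).

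The main obstacle — and the step I would spend the most care on — is verifying the strong Markov property and right continuity of the image filtration, i.e. that $Y$ is genuinely a right process and not merely a right-continuous Markov process. The standard route is \cite[Section~13]{Sharpe88}: one shows the transition operators $\hat{P}_t h(x) := \EV_x\big( h(\psi(X_t)) \big)$ for $h \in b\sB(E_\D)$ with $h(\D) = 0$ form a sub-Markovian semigroup on $E_\D$, using the absorbing property to handle the contribution of $F$ (any mass $X$ sends into $F$ is exactly mass $Y$ sends to $\D$, and $\hat P_t$ kills it correctly since $h(\D)=0$); then one checks the image of the natural filtration is right continuous by appealing to the right continuity of $(\sG_t)$ for $X$ together with the fact that $\sigma\{\psi(X_s) : s \le t\} \subseteq \sG_t$ and that on $\{X_t \in F\}$ the future is deterministic. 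I would also verify the Radon/Lusin space hypotheses needed for the transfer theorem: $E_\D$ is a Radon space because it is the topological union of the Radon space $E_\D \setminus \{\D\}$ (i.e. $E_\D$ with $\D$ removed, which sits inside the Radon space $\tE$) with the one-point space $\{\D\}$. Once these are in place, \cite[Corollary~(13.7)]{Sharpe88} (the state-space-mapping theorem for right processes) applies directly and yields that $\psi(X)$ is a right process on $E_\D$, completing the proof.
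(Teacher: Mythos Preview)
Your approach is correct and coincides with the paper's: both invoke the state-space-mapping framework of \cite[Section~13]{Sharpe88}. The paper's execution is leaner, though. It checks exactly three things---surjectivity and measurability of~$\psi$, a.s.\ right continuity of $t\mapsto\psi(X_t)$, and the factorization condition $T_t(f\circ\psi)=g\circ\psi$ for each $f\in b\sE_\D^u$---and is done. Your continuity-of-$\psi$ argument for right continuity is a fine shortcut (the paper instead writes out $\psi(X_t)$ explicitly as $X_t$ for $t<H_F$ and $\D$ afterward, using the strong Markov property at $H_F$ to see $X$ stays in $F$ past $H_F$). But your discussion of verifying the strong Markov property and filtration right continuity by hand is an unnecessary detour: those are precisely what the mapping theorem hands you once the factorization $T_t(f\circ\psi)=g\circ\psi$ is established, and that factorization is the single substantive computation (the paper does it in three lines using that $H_F=0$ $\PV_x$-a.s.\ for $x\in F$). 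One small caution: you cite \cite[Corollary~(13.7)]{Sharpe88}, which elsewhere in the paper is applied to a \emph{bijective} state-space map; here $\psi$ collapses all of $F$ to $\D$ and is not injective, so you want the general Section~13 framework (as the paper cites), not that corollary specifically.
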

\begin{proof}
 %We are using theorem~\eqref{theo:C_MS:mapping, standard}.
 The transformation $\psi$ is clearly surjective and measurable, as %for all $B \in \sE^u$,
  \begin{align*}
   \forall B \in \sE_\D^u: \quad
   \psi^{-1}(B) = \begin{cases}
                    B,        & \D \notin B, \\
                    B \cup F, & \D \in B.
                  \end{cases}
  \end{align*}
 Let $H_F$ be the first entry time of~$X$ into~$F$.
 We have $X_t \in F$ for all $t \geq \h_F$ a.s., 
 as the strong Markov property at $\h_F$ yields
  \begin{align*}
   \PV \big( X_{\h_F + t} \in F \text{ for all $t \geq 0$} \big)
   & = \EV \big( \PV_{X_{\h_F}} ( X_t \in F \text{ for all $t \geq 0$} ) \big)
     = 1.
  \end{align*}
 %where we also used that $X_{\h_F} \in F$, following from the isolation of $F$ in $\tE$,
 %and that $F$ is absorbing for $X$.
 Furthermore, it is evident that $X_t \notin F$ for all $t < \h_F$, so the transformed process
  \begin{align*}
    t \mapsto \psi(X_t) = 
     \begin{cases}
       X_t, & t < \h_F, \\
       \D,  & t \geq \h_F 
     \end{cases}
  \end{align*}
 is a.s.\ right continuous.
 
 For all $f \in b\sE_\D^u$, $x \in \tE$, we have for the semigroup $(T_t, t \geq 0)$ of $X$:
  \begin{align*}
   T_t (f \circ \psi) (x)
   & = \EV_x \big( f \circ \psi(X_t) \big) \\
   & = \EV_x \big( f(X_t) \,;\, t < \h_F \big) + f(\D) \, \PV_x \big( t \geq  \h_F \big) \\
   & = g \circ \psi(x),
  \end{align*}
 with $g \in b\sE_\D^u$ being defined by
  \begin{align*}
   g(x) :=
    \begin{cases}
     \EV_x \big( f(X_t) \,;\, t < \h_F  \big) + f(\D) \, \PV_x \big( t \geq  \h_F \big), & x \in E, \\
     f(\D), & x = \D,
    \end{cases}
  \end{align*}
 as $\h_F = 0$ holds $\PV_x$-a.s.\ for all $x \in F$. 
\end{proof}

\end{appendix}

\section*{Acknowledgements}
The main parts of this paper were developed during the author's Ph.D.\ thesis~\cite{Werner16} supervised by Prof.~J\"urgen~Potthoff, whose constant support the 
author gratefully acknowledges.

\bibliographystyle{amsplain}
\bibliography{dissMG}

\providecommand{\bysame}{\leavevmode\hbox to3em{\hrulefill}\thinspace}
\providecommand{\MR}{\relax\ifhmode\unskip\space\fi MR }
% \MRhref is called by the amsart/book/proc definition of \MR.
\providecommand{\MRhref}[2]{%
  \href{http://www.ams.org/mathscinet-getitem?mr=#1}{#2}
}
\providecommand{\href}[2]{#2}
\begin{thebibliography}{1}

\bibitem{ItoMcKean63}
Kiyoshi It{\^{o}} and Henry~P. McKean, \emph{{B}rownian motions on a half
  line}, Illinois J. Math. \textbf{7} (1963), 181--231.

\bibitem{ItoMcKean74}
\bysame, \emph{Diffusion processes and their sample paths}, 2 ed., Grundlehren
  Math. Wiss. 125, Springer, 1974.

\bibitem{KPS12}
Vadim {Kostrykin}, J\"urgen {Potthoff}, and Robert {Schrader}, \emph{{B}rownian
  motions on metric graphs}, J. Math. Phys. \textbf{53} (2012), 095206.

\bibitem{Sharpe88}
Michael Sharpe, \emph{General theory of {M}arkov processes}, Pure Appl. Math.
  133, Academic Press, 1988.

\bibitem{Werner16}
Florian Werner, \emph{{B}rownian motions on metric graphs}, Ph.D. thesis,
  University of Mannheim, 2016.

\bibitem{WernerMetricA}
\bysame, \emph{{B}rownian motions on metric graphs with non-local boundary
  conditions {I}: characterization}, ArXiv e-prints (2018), 1805.06709.

\bibitem{WernerStar}
\bysame, \emph{{B}rownian motions on star graphs with non-local boundary
  conditions}, ArXiv e-prints (2018), 1803.07027.

\bibitem{WernerConcat}
\bysame, \emph{Concatenation and pasting of right processes}, ArXiv e-prints
  (2018), 1801.02595.

\end{thebibliography}

\end{document}